\newtheorem{theorem}{Theorem}[section]
\newtheorem{proposition}[theorem]{Proposition}
\newtheorem{lemma}[theorem]{Lemma}
\theoremstyle{definition}
\newtheorem{definition}[theorem]{Definition}
\theoremstyle{remark}
\newtheorem{remark}[theorem]{Remark}
\newtheorem{example}[theorem]{Example}
\numberwithin{equation}{section}
\begin{document}

\title[On the rational homotopy type of intersection spaces]{On the rational homotopy type of intersection spaces}

\author[D.J. Wrazidlo]{Dominik J. Wrazidlo}

\address{Institute of Mathematics for Industry, Kyushu University, Motooka 744, Nishi-ku, Fukuoka 819-0395, Japan}
\email{d-wrazidlo@imi.kyushu-u.ac.jp}

\subjclass[2010]{55N33, 57P10, 55P62}

% MSC 2010:
% 55N33 --- Intersection homology and cohomology
% 55R10 --- Fiber bundles
% 55R70 --- Fibrewise topology
% 55P62 --- Rational homotopy theory
% 57P10 --- Poincar\'{e} duality spaces

%\date{\today.}

\keywords{Stratified spaces, pseudomanifolds, intersection spaces, Poincar\'{e} duality, rational homotopy, de Rham theorem, smooth differential forms.}

\begin{abstract}
Banagl's method of intersection spaces allows to modify certain types of stratified pseudomanifolds near the singular set in such a way that the rational Betti numbers of the modified spaces satisfy generalized Poincar\'{e} duality in analogy with Goresky-MacPherson's intersection homology.
In the case of one isolated singularity, we show that the duality isomorphism comes from a nondegenerate intersection pairing which depends on the choice of a chain representative of the fundamental class of the regular stratum.
On the technical side, we use piecewise linear polynomial differential forms due to Sullivan to define a suitable commutative cochain algebra model for intersection spaces.
Our construction parallels Banagl's commutative cochain algebra of smooth differential forms modeling intersection space cohomology, and we show that both algebras are weakly equivalent.
\end{abstract}

\maketitle
%\tableofcontents

%\newpage
\section{Introduction}
In this paper, we provide a new and systematic approach to intersection space cohomology that is based on tools of rational homotopy theory.

Intersection spaces are a spatial construction due to Banagl \cite{ban2, ban} that gives access to Poincar\'{e} duality for singular spaces.
The cohomology theory of intersection spaces is neither isomorphic to Goresky-MacPherson's intersection homology \cite{gm, gm2}, nor to Cheeger's $L^{2}$ cohomology of Riemannian pseudomanifolds \cite{che, che2, che3}.
For singular Calabi-Yau $3$-folds, the homology of intersection spaces is known to be related to intersection homology by mirror symmetry (see \cite{ban}).
Intersection space cohomology has been modeled by means of linear algebra \cite{ges}, sheaf theory \cite{bm0, bm, bbm, max, ab}, $L^{2}$ cohomology \cite{bh}, and smooth differential forms \cite{ban4}.
While the first two approaches do not take into account the cup product structure on intersection space cohomology, the latter two are only available for pseudomanifolds equipped with a Thom-Mather smooth stratification.
To overcome all these drawbacks, it seems adequate to use commutative cochain algebra models for intersection spaces as introduced in \cite{klim, klim2} for pseudomanifolds with isolated singularities.

Apart from certain real cochain algebras of smooth differential forms used in \cite{ban4}, commutative models have so far not been employed to derive Poincar\'{e} duality theorems for intersection space cohomology.
Note that Klimczak \cite{klim} obtains duality results by turning intersection spaces themselves into Poincar\'{e} duality spaces via cell attachments.
This idea has been extended by the author to certain pseudomanifolds of stratification depth one in \cite{wra2}.
However, even for product link bundles duality results have only been implemented there under an additional condition on the dimension of the singular set.
\par\bigskip

The purpose of this paper is to construct in analogy with Banagl's de Rham approach \cite{ban4} a nondegenerate intersection pairing on the cohomology of a suitable commutative model for intersection spaces.

Let $\overline{p}$ be a perversity in the sense of intersection homology theory, and let $X$ be a compact topologically stratified pseudomanifold having one isolated singularity $x$ with connected link.
While the construction of the pointed intersection space $I^{\overline{p}}X$ involves choosing a Moore approximation of the link, we show that its augmented commutative models do not depend on this choice in the sense that they determine a unique weak equivalence class $\mathcal{I}^{\overline{p}}X$ (see \Cref{main result B}).
We point out that $\mathcal{I}^{\overline{p}}X$ determines the rational homotopy type of simply connected intersection spaces $I^{\overline{p}}X$ by Quillen-Sullivan's theorem.

When $X$ has a Thom-Mather $C^{\infty}$-stratification, we show in \Cref{de rham theorem} that over the reals, a representative of the class $\mathcal{I}^{\overline{p}}X$ is given by Banagl's augmented commutative cochain algebra $\Omega I^{\ast}_{\overline{p}}(X \setminus \{x\}) \oplus \mathbb{R}$ (see \cite{ban4}) consisting of certain smooth differential forms on the complement $M$ in $X$ of a fixed distinguished neighborhood of $\{x\} \subset X$.
Thus, our result gives a strengthening of the de Rham description of intersection space cohomology obtained previously in \cite{ban4, es}.

Recall from \cite{ban4} that generalized Poincar\'{e} duality for $\Omega I^{\ast}_{\overline{p}}(X \setminus \{x\})$ is realized by a canonical nondegenerate intersection pairing on cohomology that integrates wedge product of forms over the top stratum.
In the setting of rational homotopy theory, we imitate the complex $\Omega I^{\ast}_{\overline{p}}(X \setminus \{x\})$ by the augmentation ideal $AI_{\overline{p}}(X, x)$ of a convenient augmented commutative model $AI_{\overline{p}}(X) \in \mathcal{I}^{\overline{p}}X$.
In analogy with the inclusion $\Omega I^{\ast}_{\overline{p}}(X \setminus \{x\}) \subset \Omega^{\ast}(X \setminus \{x\})$, there is a canonical inclusion $\iota_{\overline{p}} \colon AI_{\overline{p}}(X, x) \rightarrow A_{PL}(M)$ (see \Cref{proposition essential diagrams}(i)) into the commutative cochain algebra of piecewise linear polynomial differential forms on $M$.
%the complement $M$ in $X$ of a distinguished neighborhood of $\{x\} \subset X$.
In our main theorem below, we also employ the linear form $\int_{\mu} \colon A_{PL}(M) \rightarrow \mathbb{Q}$ (see (\ref{integration over chain})) that integrates polynomial differential forms over the singular simplices of a fixed singular chain representative $\mu$ of the fundamental class of $M$ induced by a fixed orientation on $X$.

\begin{theorem}[Generalized Poincar\'{e} duality]\label{main result}
Fix complementary perversities $\overline{p}$ and $\overline{q}$.
If $X$ is $\mathbb{Q}$-oriented and $n$-dimensional, then multiplication in $A_{PL}(M)$ followed by integration over a normalized singular chain representative $\mu$ of the fundamental class of $M$ induces a nondegenerate bilinear form
\begin{align*}
\int_{\mu} \colon H^{r}(AI_{\overline{p}}(X, x)) \times H^{n-r}(AI_{\overline{q}}(X, x)) \rightarrow \mathbb{Q}, \quad ([\alpha], [\beta]) \mapsto \int_{\mu} \iota_{\overline{p}}(\alpha) \cdot \iota_{\overline{q}}(\beta).
%\quad \operatorname{deg} \alpha + \operatorname{deg} \beta = n,
\end{align*}
\end{theorem}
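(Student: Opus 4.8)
Here is the strategy I would follow.

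The plan is to deduce nondegeneracy from classical Poincar\'{e}--Lefschetz duality for the compact manifold-with-boundary $(M,\partial M)$ (where $\partial M$ is the link) by pulling everything back along the comparison map $\iota_{\overline{p}}$ and then running the homological algebra of the Moore-type truncations that are built into $AI_{\overline{p}}(X)$ and $AI_{\overline{q}}(X)$. Since $X$ is compact, all cohomology groups in sight are finite-dimensional over $\mathbb{Q}$, so it suffices to show that for every $r$ the adjoint map $H^{r}(AI_{\overline{p}}(X,x)) \to H^{n-r}(AI_{\overline{q}}(X,x))^{\vee}$ induced by $\int_{\mu}$ is an isomorphism. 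This parallels Banagl's de Rham argument in \cite{ban4}, but it must be carried out intrinsically over $\mathbb{Q}$: the de Rham comparison of \Cref{de rham theorem} only yields the statement over $\mathbb{R}$, and there is no Hodge-theoretic truncation available in the piecewise linear setting.

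First I would verify that the form is well-defined on cohomology. For cocycles $\alpha$ of degree $r$ in $AI_{\overline{p}}(X,x)$ and $\beta$ of degree $n-r$ in $AI_{\overline{q}}(X,x)$, the product $\iota_{\overline{p}}(\alpha)\cdot\iota_{\overline{q}}(\beta)$ is a cocycle of degree $n$ in $A_{PL}(M)$. By \Cref{proposition essential diagrams} its restriction to $\partial M$ lies in the relevant truncation subcomplexes, and since $\dim \partial M = n-1$ one has $H^{n}(\partial M;\mathbb{Q}) = 0$; together with the complementarity of the two cut-off degrees (the translation of $\overline{p}+\overline{q}=\overline{t}$) this forces the restriction to $\partial M$ to be a coboundary, so the product represents a class in $H^{n}(A_{PL}(M,\partial M)) \cong \mathbb{Q}$. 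Because $\mu$ is a normalized chain, integration over $\mu$ is compatible with the differential up to the Stokes contribution over $\partial\mu$, so $\int_{\mu}$ computes the evaluation of that class on the relative fundamental class of $M$, independently of the chosen representatives of $[\alpha]$ and $[\beta]$.

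Next I would set up a comparison of long exact sequences. The diagrams of \Cref{proposition essential diagrams} present $H^{\ast}(AI_{\overline{p}}(X,x))$ as sitting in a long exact sequence together with $H^{\ast}(M)$ and the cohomology of a $\overline{p}$-truncation of $A_{PL}(\partial M)$ (a complex computing $H^{<k}(\partial M;\mathbb{Q})$ for the appropriate cut-off $k$), and similarly for $\overline{q}$; these are the algebraic counterparts of the cofiber sequence defining $I^{\overline{p}}X$. Along $\iota_{\overline{p}}$ and $\iota_{\overline{q}}$, the pairing $\int_{\mu}$ is compatible with (i) the Lefschetz pairing $H^{r}(M)\times H^{n-r}(M,\partial M)\to\mathbb{Q}$ and (ii) the Poincar\'{e} pairing $H^{s}(\partial M)\times H^{n-1-s}(\partial M)\to\mathbb{Q}$ restricted to the truncations, and the connecting homomorphisms of the two long exact sequences are mutually adjoint under these pairings, this being the standard compatibility of the long exact sequence of the pair $(M,\partial M)$ with duality. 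Checking that $\int_{\mu}$ and $\int_{\partial\mu}$ genuinely realize these classical intersection pairings is where Sullivan's polynomial de Rham theorem and the normalization of $\mu$ enter.

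Finally I would run a five lemma argument. Dualizing the long exact sequence computing $H^{n-\ast}(AI_{\overline{q}}(X,x))$, applying Lefschetz duality on $(M,\partial M)$ to the $H^{\ast}(M)$-terms and Poincar\'{e} duality on $\partial M$ to the truncation terms, one obtains a ladder of exact sequences whose squares commute up to sign (by the adjointness above), and whose vertical maps are isomorphisms on the $H^{\ast}(M)$-terms by Lefschetz duality and on the truncation terms by the fact that the $\overline{p}$-truncation and the $\overline{q}$-truncation of $\partial M$ are Poincar\'{e}-dually complementary, which is exactly where the complementarity of $\overline{p}$ and $\overline{q}$ re-enters. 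The five lemma then forces the vertical map on the remaining terms, which is the adjoint of $\int_{\mu}$, to be an isomorphism, i.e. the form is nondegenerate. The main obstacle is the bookkeeping in this last step: fixing signs so that the ladder genuinely commutes, formulating the truncation-duality statement at the cochain level in a way compatible with the multiplicative structure of $A_{PL}(M)$, and, underlying everything, verifying that $\int_{\mu}$ and $\int_{\partial\mu}$ really represent the classical intersection pairings, for which the hypothesis that $\mu$ be a normalized chain is essential.
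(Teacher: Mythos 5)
Your proposal is correct and takes essentially the same route as the paper: well-definedness via Stokes' theorem together with the vanishing of products of complementary cotruncations, then a five-lemma argument comparing the long exact sequences associated to $AI_{\overline{p}}(X,x)$ and $AI_{\overline{q}}(X,x)$ against Poincar\'{e}--Lefschetz duality on $(M,\partial M)$ and a truncated Poincar\'{e} duality on $\partial M$. One point worth making explicit, which your phrase ``and similarly for $\overline{q}$'' glosses over and which is precisely what \Cref{proposition essential diagrams}(ii) and (iii) are set up to deliver: the two short exact sequences must be taken in \emph{complementary} forms, namely $0\to AI_{\overline{p}}(X,x)\to A_{PL}(M)\to A_{PL}(L)/\vartheta_{\geq k}(\tau_{\geq k}A_{PL}(L))\to 0$ for $\overline{p}$ but $0\to A_{PL}(M,\partial M)\to AI_{\overline{q}}(X,x)\to\tau_{\geq l}A_{PL}(L)\to 0$ for $\overline{q}$, so that the outer terms pair as $H^{\ast}(M)$ with $H^{n-\ast}(M,\partial M)$ and as the truncation quotient (computing $H^{<k}(L)$) with the cotruncation $\tau_{\geq l}$ (computing $H^{\geq l}(L)$); a ``parallel'' presentation of both sides would not produce nondegenerate vertical pairings and the five lemma would not apply. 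Also, in your well-definedness discussion the restriction of $\iota_{\overline{p}}(\alpha)\cdot\iota_{\overline{q}}(\beta)$ to $\partial M$ is in fact identically zero by the vanishing-products lemma (\Cref{lemma vanishing products}, since $k+l=n+1$), not merely a coboundary as a consequence of $H^{n}(\partial M)=0$; the stronger statement is what is actually used, and the analogous vanishing in degree $n-1$ is what makes the pairing independent of the cocycle representatives.
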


%\textcolor{red}
%{\begin{remark}
%Goresky-MacPherson intersection homology, approach via sheaf theory yields Poincar\'{e} duality, but not a canonical intersection form, whereas there is now a fundamental class (see King, Greg Friedman's paper?)
%sheaf theoretic approach loses some information about shape of duaity isomorphism
%\end{remark}}

We expect that the methods of this paper apply more generally to pseudomanifolds of higher stratification depth (see Remark \Cref{higher stratification depth}), and can be useful to study commutative models for the relative intersection spaces of Agust\'{i}n-de Bobadilla \cite{ab}.

The paper is structured as follows.
\Cref{Preliminaries} provides preliminaries on cochain algebras.
In \Cref{Integration}, we discuss the background on integration of piecewise linear polynomial differential forms that is relevant to this paper.
Cotruncation of augmented commutative cochain algebras will be studied in \Cref{Cotruncation of commutative cochain algebras}.
%In \Cref{cotruncation and poincare duality}, we analyze the effect of cotruncation on Poincar\'{e} duality.
In \Cref{A commutative model for intersection spaces}, we construct a commutative model for intersection spaces.
The proof of \Cref{main result} is given in \Cref{proof of main result}.
Finally, the purpose of \Cref{Smooth differential forms} is to relate our commutative model for intersection spaces to smooth differential forms.

\section{Preliminaries on cochain algebras}\label{Preliminaries} 
%In this section, we provide some relevant background material from rational homotopy theory 
%For the convenience of the reader, our presentation includes some direct consequences not explicitly stated in \cite{fht}.
In this section, we discuss basic facts, and fix necessary notation and sign conventions concerning cochain algebras by following the reference \cite{fht}.

Throughout the paper (with the exception of \Cref{Smooth differential forms}), we work over the ground field $\mathbb{Q}$.

\subsection{Graded vector spaces and complexes}(see \S 3(a) in \cite{fht})
A graded vector space is a family $V = \{V_{i}\}_{i \in \mathbb{Z}}$ of rational vector spaces.
An element $v \in V_{i}$ is called (by abuse of language) an element of $V$ of degree $i$.
%, and we write $\operatorname{deg} v = i$.
A linear map $f \colon V \rightarrow W$ of degree $i$ between graded vector spaces is a family of linear maps $f_{j} \colon V_{j} \rightarrow W_{i+j}$.
It determines graded subspaces $\operatorname{ker} f \subset V$ and $\operatorname{im} f \subset W$ via $(\operatorname{ker} f)_{j} = \operatorname{ker} f_{j}$ and $(\operatorname{im} f)_{j} = \operatorname{im} f_{j-i}$, respectively.
A sequence $U \xrightarrow{f} V \xrightarrow{g} W$ of linear maps is exact at $V$ if $\operatorname{ker} g = \operatorname{im} f$.
A short exact sequence is a sequence $0 \rightarrow U \xrightarrow{f} V \xrightarrow{g} W \rightarrow 0$ which is exact at $U$, $V$ and $W$.
%If, in addition, $\operatorname{ker} f = 0$ and $\operatorname{im} g = W$, then we call  a .

A complex is a graded vector space $V$ together with a differential $d$, that is, a linear map $d \colon V \rightarrow V$ of degree $-1$ such that $d^{2} = 0$.
To a complex $V = (V, d)$ %, the elements of $\operatorname{ker} d$ are cycles, the elements of $\operatorname{im} d$ are boundaries, and
we may assign its homology $H(V)$ which is the graded vector space given by the quotient $H(V, d) = \operatorname{ker} d / \operatorname{im} d$.
A morphism of complexes is a linear map $\varphi \colon (V, d) \rightarrow (W, d)$ of degree zero such that $d \varphi = \varphi d$.
It induces $H(\varphi) \colon H(V) \rightarrow H(W)$.
If $H(\varphi)$ is an isomorphism, we call $\varphi$ a quasi-isomorphism, and write $\varphi \colon V \xrightarrow{\simeq} W$.
A short exact sequence of morphisms of complexes, $0 \rightarrow (U, d) \xrightarrow{\alpha} (V, d) \xrightarrow{\beta} (W, d) \rightarrow 0$, induces a long exact homology sequence,
\begin{align}\label{long exact homology sequence}
\dots \rightarrow H_{i}(U) \xrightarrow{H_{i}(\alpha)} H_{i}(V) \xrightarrow{H_{i}(\beta)} H_{i}(W) \xrightarrow{\partial} H_{i-1}(U) \rightarrow \dots,
\end{align}
defined for all $i$.
Here, the connecting homomorphism $\partial$ is defined in the usual way:
if $w \in W$ represents $[w] \in H_{i}(W)$ and if $\beta(v) = w$, then $\partial([w])$ is represented by the unique $u \in U$ such that $\alpha(u) = dv$.
A chain complex is a complex $(V, d)$ with $V = \{V_{n}\}_{n \geq 0}$.

In contrast to the previous paragraph, the use of upper grading notation $V = \{V^{i}\}_{i \in \mathbb{Z}}$ in a complex $(V, d)$ will always mean that $d$ has degree $+1$, and then we call $H(V) = H(V, d)$ the cohomology of $V$.
Note that the analog of the connecting homomorphism $\partial$ in the long exact sequence (\ref{long exact homology sequence}) for cohomology will be of the form $\delta \colon H^{i}(W) \rightarrow H^{i+1}(U)$.
A cochain complex is a complex $(V, d)$ with $V = \{V^{n}\}_{n \geq 0}$.

\subsection{Graded algebras}(see \S 3(b) in \cite{fht})
A graded algebra is a graded vector space $R = \{R_{i}\}_{i \in \mathbb{Z}}$ together with bilinear pairings $R_{i} \times R_{j} \rightarrow R_{i+j}$, $(x, y) \mapsto xy$, which are associative (that is, $(xy)z = x(yz)$ for all $x, y, z \in R$) and have an identity $1 \in R_{0}$ (that is, $1x = x = x1$ for all $x \in R$).
We regard $\mathbb{Q}$ as a graded algebra concentrated in degree $0$.
A morphism $\varphi \colon R \rightarrow S$ of graded algebras is a linear map of degree zero such that $\varphi(xy) = \varphi(x) \varphi(y)$ and $\varphi(1) = 1$.
An augmentation for a graded algebra $R$ is a morphism $\varepsilon \colon R \rightarrow \mathbb{Q}$ of graded algebras, and the inclusion $\operatorname{ker} \varepsilon \hookrightarrow R$ is called the augmentation ideal of $\varepsilon$.
A graded algebra is called augmented if it is equipped with an augmentation.

A derivation of degree $k$ is a linear map $\theta \colon R \rightarrow R$ of degree $k$ such that $\theta(xy) = (\theta x)y + (-1)^{k \operatorname{deg}x}(\theta y)$.
A graded algebra $A$ is commutative if $xy = (-1)^{\operatorname{deg} x \operatorname{deg} y}yx$ for all $x, y \in A$.

\subsection{Differential graded algebras}(see \S 3(c) in \cite{fht})
A differential graded algebra (DGA) is a graded algebra $R$ together with a differential $d$ in $R$ that is a derivation.
Note that $\operatorname{ker} d$ is a subalgebra of $R$, and $\operatorname{im} d$ is an ideal in $\operatorname{ker} d$.
Thus, the homology $H(R, d) = \operatorname{ker} d / \operatorname{im} d$ inherits the structure of a graded algebra.
A morphism of differential graded algebras $f \colon (R, d) \rightarrow (S, d)$ is a morphism of graded algebras satisfying $fd = df$.
It induces a morphism $H(f) \colon H(R) \rightarrow H(S)$ of graded algebras.
If $H(f)$ is an isomorphism, we call $f$ a quasi-isomorphism, and write $f \colon (R, d) \xrightarrow{\cong} (S, d)$.
A chain algebra is a DGA $(R, d)$ with $R = \{R_{n}\}_{n \geq 0}$.

A cochain algebra is a DGA $(R, d)$ with $R = \{R^{n}\}_{n \geq 0}$.
For $(R, d)$ we will mainly use the notation $R$ or $R^{\ast}$ in this paper.
As before, the use of upper grading notation means that $d$ has degree $+1$, and the graded algebra $H(R) = H(R, d)$ is called the cohomology of $R$.
A cochain algebra is commutative if this holds for the underlying graded algebra.

%The reduced homology $\widetilde{H}(R, d)$ of a differential graded algebra $(R, d)$ is the homology of the kernel of the canonical morphism $(R, d) \rightarrow \mathbb{Q}$.

\subsection{Normalized singular (co)chains}(see \S 4(a) and \S 5 in \cite{fht})
We recall the concept of normalized singular (co)chains, which will be used in \Cref{de rham}, but might be less familiar than singular (co)chains.

For $n \geq 0$ let $S_{n}(X)$ denote the set of all singular $n$-simplices on a space $X$, that is, continuous maps $\Delta^{n} \rightarrow X$, where $\Delta^{n}$ denotes the convex hull of the standard basis in $\mathbb{R}^{n+1}$.
The $i$-th face inclusion $\lambda_{i} \colon \Delta^{n-1} \rightarrow \Delta^{n}$ of $\Delta^{n}$ (defined for $n \geq 1$ and $0 \leq i \leq n$) and the $j$-th degeneracy $\rho_{j} \colon \Delta^{n+1} \rightarrow \Delta^{n}$ of $\Delta^{n}$ (defined for $n \geq 0$ and $0 \leq j \leq n$) induce the face and degeneracy maps
\begin{align*}
\partial_{i} \colon S_{n+1}(X) \rightarrow S_{n}(X), \qquad \partial_{i}(\sigma) = \sigma \circ f_{i}, \\
s_{j} \colon S_{n}(X) \rightarrow S_{n+1}(X), \qquad s_{j}(\sigma) = \sigma \circ \rho_{j}.
\end{align*}
The singular chain complex of $X$ is the chain complex $CS_{\ast}(X) = \{CS_{n}(X)\}_{n \geq 0}$, where $CS_{n}(X)$ is the rational vector space with basis $S_{n}(X)$, and the differential is given by $d = \sum_{i}(-1)^{i}\partial_{i}$.
Its homology is denoted by $H_{\ast}(X)$, and is called the singular homology of $X$.

Let $DS_{n+1}(X) \subset CS_{n+1}(X)$ denote the subspace spanned by the $(n+1)$-simplices of the form $s_{i}(\tau)$ (degenerate simplices), where $\tau \in S_{n}(X)$ and $0 \leq i \leq n$.
It can be shown that $DS_{\ast}(X)$ is a subchain complex of $CS_{\ast}(X)$, and that $H(DS_{\ast}(X)) = 0$.
The normalized singular chain complex of $X$ is the quotient complex $C_{\ast}(X) = CS_{\ast}(X) / DS_{\ast}(X)$.
As the surjection $CS_{\ast}(X) \rightarrow C_{\ast}(X)$ is a quasi-isomorphism, we may identify $H_{\ast}(X) = H(C_{\ast}(X))$.
A map $f \colon X \rightarrow Y$ induces the complex morphism $f_{\sharp} \colon C_{\ast}(X) \rightarrow C_{\ast}(Y)$ given by $f_{\sharp}(\sigma) = f \circ \sigma$.
We write $f_{\ast} = H(f_{\sharp})$ for the induced map on homology.
For a subspace $A \subset X$ the quotient complex $C_{\ast}(X, A) = C_{\ast}(X) / C_{\ast}(A)$ computes the ordinary relative singular homology $H_{\ast}(X, A) = H(CS_{\ast}(X) / CS_{\ast}(A))$.

The normalized singular cochain algebra of $X$ is the cochain complex $C^{\ast}(X) = \{C^{n}(X)\}_{n \geq 0}$, where $C^{n}(X)$ is the rational vector space of linear forms $\varphi \colon C_{n}(X) \rightarrow \mathbb{Q}$, and the differential is given by the formula $d(\varphi) = -(-1)^{\operatorname{deg} \varphi} \varphi \circ d$.
Moreover, $C^{\ast}(X)$ carries the structure of a cochain algebra by value of the cup product
$$
C^{i}(X) \times C^{i}(X) \rightarrow C^{i+j}(X), \qquad (\varphi, \psi) \mapsto \varphi \cup \psi,
$$
which can be defined in terms of the Alexander-Whitney map.
The cohomology algebra $H(C^{\ast}(X))$ is denoted by $H^{\ast}(X)$ and called the singular cohomology of $X$.

A map $f \colon X \rightarrow Y$ induces the DGA morphism $f^{\sharp} \colon C^{\ast}(Y) \rightarrow C^{\ast}(X)$ given by $f^{\sharp}(\varphi) = \varphi \circ f_{\sharp}$.
We write $f^{\ast} = H(f^{\sharp})$ for the induced map on cohomology.
For any inclusion $i \colon A \rightarrow X$ of a subspace the induced map $i^{\sharp} \colon C^{\ast}(X) \rightarrow C^{\ast}(A)$ is surjective, and its kernel is an ideal of $C^{\ast}(X)$ which will be denoted by $C^{\ast}(X, A)$.
%and which is called the complex of normalized relative singular cochains.
Thus, any pair $(X, A)$ induces a natural short exact sequence
$$
0 \rightarrow C^{\ast}(X, A) \xrightarrow{j^{\sharp}} C^{\ast}(X) \xrightarrow{i^{\sharp}} C^{\ast}(A) \rightarrow 0,
$$
in which we consider the map $j^{\sharp}$ to be induced by the inclusion $j \colon (X, \emptyset) \rightarrow (X, A)$.
More generally, for any map $f \colon X \rightarrow Y$ such that $f(A) \subset B$ for subspaces $A \subset X$ and $B \subset Y$, the morphism $f^{\sharp} \colon C^{\ast}(Y) \rightarrow C^{\ast}(X)$ can be seen to restrict to a morphism $f^{\sharp} \colon C^{\ast}(Y, B) \rightarrow C^{\ast}(X, A)$.

The cohomology algebra of $C^{\ast}(X, A)$ is denoted by $H^{\ast}(X, A)$ and is called the relative singular cohomology of the pair $(X, A)$.

%We note the naturality rule $i^{\sharp}(x \cup y) = x \cup i^{\sharp}(y)$ on the level of cochains for inclusions $i \colon (X, \emptyset) \rightarrow (X, A)$.
%\textcolor{red}{(This can probably be done elementary by using the short exact sequence on cochain complexes of pairs)}

\subsection{The commutative cochain algebra $A_{PL}(X)$}\label{The commutative cochain algebra apl}
(see \S 10(c) in \cite{fht})
Let $X$ be a topological space.
In contrast to the fact that the cochain algebra $C^{\ast}(X)$ of normalized singular cochains is usually not commutative, Sullivan has constructed a contravariant functor $A_{PL}$ from the category of topological spaces and continuous maps to the category of commutative cochain algebras and cochain algebra morphisms such that the graded algebras $H^{\ast}(X)$ and $H(A_{PL}(X))$ are naturally isomorphic (see \Cref{The de Rham theorem}).
In analogy with smooth differential forms on a manifold, elements of $A_{PL}(X)$ are families of polynomial differential forms on the singular simplices of $X$ that are compatible with face and degeneracy maps.

Given a map $f \colon X \rightarrow Y$, we write $f^{\ast} \colon A_{PL}(Y) \rightarrow A_{PL}(X)$ for the induced morphism.
For any inclusion $i \colon A \rightarrow X$ of a subspace the induced map $i^{\ast} \colon A_{PL}(X) \rightarrow A_{PL}(A)$ is surjective, and its kernel is an ideal of $A_{PL}(X)$ which will be denoted by $A_{PL}(X, A)$.
Thus, any pair $(X, A)$ induces a natural short exact sequence
$$
0 \rightarrow A_{PL}(X, A) \xrightarrow{j^{\ast}} A_{PL}(X) \xrightarrow{i^{\ast}} A_{PL}(A) \rightarrow 0,
$$
in which we consider the map $j^{\ast}$ to be induced by the inclusion $j \colon (X, \emptyset) \rightarrow (X, A)$.
More generally, for any map $f \colon X \rightarrow Y$ such that $f(A) \subset B$ for subspaces $A \subset X$ and $B \subset Y$, the morphism $f^{\ast} \colon A_{PL}(Y) \rightarrow A_{PL}(X)$ can be seen to restrict to a morphism $f^{\ast} \colon A_{PL}(Y, B) \rightarrow A_{PL}(X, A)$.

%\textcolor{red}{(can cohomology of $A_{PL}(X, A)$ can be identified with $H^{\ast}(X, A)$?)}

For the one point space $X = \ast$, we have $A_{PL}(\ast) = \mathbb{Q}$.
For a disjoint union $X = X_{1} \sqcup X_{2}$, we can identify $A_{PL}(X) = A_{PL}(X_{1}) \oplus A_{PL}(X_{2})$ by means of the morphisms induced by the inclusions $X_{1}, X_{2} \subset X$.

\section{Integration}\label{Integration}
In this section, we provide the background on integration of piecewise linear polynomial differential forms that is needed to construct the nondegenerate bilinear form of \Cref{main result}.
Our presentation is informed by \S 10(e) in \cite{fht}.
For the convenience of the reader, we go into detail whenever our version of a result is not explicitly stated in \cite{fht}.

Throughout this section, let $(X, A)$ be a pair consisting of a topological space $X$ and subspace $A \subset X$.
Let $i \colon A \rightarrow X$ and $j \colon (X, \emptyset) \rightarrow (X, A)$ denote the inclusions.

\subsection{The de Rham theorem}\label{The de Rham theorem}
As shown in Theorem 10.15(ii) in \cite{fht}, integration of polynomial differential forms over singular simplices of $X$ gives rise to a natural quasi-morphism $\oint_{X} \colon A_{PL}(X) \xrightarrow{\simeq} C^{\ast}(X)$ of cochain complexes.
In the following, we discuss an extension of this result to space pairs.

\begin{theorem}\label{de rham}
For any pair $(X, A)$ there is a quasi-isomorphism
\begin{align}\label{integration}
\oint_{(X, A)} \colon A_{PL}(X, A) \xrightarrow{\simeq} C^{\ast}(X, A)
\end{align}
of cochain complexes.
Moreover, for any second pair $(Y, B)$ and any map $f \colon X \rightarrow Y$ such that $f(A) \subset B$, we obtain a commutative diagram
\begin{equation}\label{diagram naturality of integration}
\begin{tikzcd}
A_{PL}(X, A) \ar{r}{f^{\ast}} \ar{d}{\oint_{(X, A)}} & A_{PL}(Y, B) \ar{d}{\oint_{(Y, B)}} \\
C^{\ast}(X, A) \ar{r}{f^{\sharp}} & C^{\ast}(Y, B).
%\widehat{\tau}_{\geq k}^{D}C^{\ast} \ar{d}[swap]{(\vartheta_{\geq k}^{D}, \varepsilon_{D})} \ar{r}{\rho_{\geq k} \; \simeq} & \widehat{\sigma}_{\geq k}^{E}C^{\ast} \ar{d}{(\varphi_{\geq k}^{E}, \varepsilon_{E})}  \\
%C^{\ast} \oplus \mathbb{Q} \ar{r}{=} & C^{\ast} \oplus \mathbb{Q}
\end{tikzcd}
%\begin{tikzpicture}
%\path (0,0) node(a) {$A_{PL}(X, A)$}
%         (3, 0) node(b) {$A_{PL}(Y, B)$}
%         (0, -1.5) node(c) {$C^{\ast}(X, A)$}
%         (3, -1.5) node(d) {$C^{\ast}(Y, B)$};
%
%\draw[->] (a) -- node[left] {$\oint_{(X, A)}$} (c);
%\draw[->] (a) -- node[above] {$f^{\ast}$} (b);
%\draw[->] (b) -- node[right] {$\oint_{(Y, B)}$} (d);
%\draw[->] (c) -- node[above] {$f^{\sharp}$} (d);
%\end{tikzpicture}
\end{equation}
\end{theorem}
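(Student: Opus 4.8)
The plan is to deduce the relative statement from the absolute de Rham theorem $\oint_X \colon A_{PL}(X) \xrightarrow{\simeq} C^\ast(X)$ quoted from Theorem 10.15(ii) of \cite{fht}, together with the two natural short exact sequences recorded in \Cref{Preliminaries}: $0 \to A_{PL}(X,A) \xrightarrow{j^\ast} A_{PL}(X) \xrightarrow{i^\ast} A_{PL}(A) \to 0$ and $0 \to C^\ast(X,A) \xrightarrow{j^\sharp} C^\ast(X) \xrightarrow{i^\sharp} C^\ast(A) \to 0$. The key point is that the absolute integration maps are natural in the space, so in particular the square relating $\oint_X$, $\oint_A$ and the restriction maps $i^\ast$, $i^\sharp$ commutes. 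First I would check that $\oint_X$ carries $A_{PL}(X,A) = \ker i^\ast$ into $C^\ast(X,A) = \ker i^\sharp$: if $\omega \in A_{PL}(X,A)$ then $i^\sharp(\oint_X \omega) = \oint_A(i^\ast \omega) = \oint_A(0) = 0$, so $\oint_X \omega \in \ker i^\sharp = C^\ast(X,A)$. This yields a well-defined morphism of cochain complexes $\oint_{(X,A)} \colon A_{PL}(X,A) \to C^\ast(X,A)$ obtained by restriction, fitting into a morphism of short exact sequences whose other two vertical arrows are $\oint_X$ and $\oint_A$.

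Next I would invoke the five lemma (applied to the long exact cohomology sequences induced by the two short exact sequences, via the long exact sequence (\ref{long exact homology sequence}) in its cohomological form): since $\oint_X$ and $\oint_A$ are quasi-isomorphisms, the induced maps on cohomology of $X$ and of $A$ are isomorphisms in every degree, and the naturality square just constructed shows the ladder of long exact sequences commutes up to the usual signs in the connecting homomorphisms, which I would check is compatible (the sign in $d(\varphi) = -(-1)^{\deg\varphi}\varphi\circ d$ is chosen precisely so that $\oint$ is a chain map, and the connecting maps $\delta$ on both sides are induced by the differentials through the same set-theoretic splittings, so commutativity holds). The five lemma then forces $H^\ast(\oint_{(X,A)})$ to be an isomorphism in all degrees, i.e.\ $\oint_{(X,A)}$ is a quasi-isomorphism, proving (\ref{integration}).

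For the naturality statement (\ref{diagram naturality of integration}), given $f \colon X \to Y$ with $f(A) \subset B$, I would start from the naturality of the absolute integration map, which gives a commutative square relating $\oint_X$, $\oint_Y$, $f^\ast \colon A_{PL}(Y) \to A_{PL}(X)$ and $f^\sharp \colon C^\ast(Y) \to C^\ast(X)$. Since $f^\ast$ restricts to $A_{PL}(Y,B) \to A_{PL}(X,A)$ and $f^\sharp$ restricts to $C^\ast(Y,B) \to C^\ast(X,A)$ (as recalled in \Cref{Preliminaries}), and the relative integration maps are by construction just the restrictions of the absolute ones, the desired square (\ref{diagram naturality of integration}) is obtained by passing to the sub-(and quotient) complexes in the absolute naturality square; commutativity is inherited automatically. (Note that the target pair in (\ref{diagram naturality of integration}) is written $(Y,B)$ although $f^\ast$ is contravariant; the diagram is to be read with $f^\ast$ and $f^\sharp$ going from the $(Y,B)$-complexes to the $(X,A)$-complexes, matching the direction of the vertical integration maps.)

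I expect the only genuine subtlety to be bookkeeping: verifying that the connecting homomorphisms in the two long exact sequences are intertwined by $\oint$ with the correct signs, so that the five-lemma argument is legitimate rather than merely morally correct. Once the sign conventions of \S 5 and \S 10 of \cite{fht} are in force this is a formality, but it is the one step where care is needed; everything else is a diagram chase from the absolute de Rham theorem and the naturality of $A_{PL}$ and $C^\ast$ for pairs.
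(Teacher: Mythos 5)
Your proof takes essentially the same route as the paper: restrict the absolute quasi-isomorphism $\oint_X$ to the augmentation ideals using the commutative square with $\oint_A$, then apply the five lemma to the resulting morphism of short exact sequences, and deduce naturality by restriction of the absolute naturality square. The one extra worry you raise --- that the connecting homomorphisms must be checked to intertwine ``with the correct signs'' --- is unnecessary: once you have a morphism of short exact sequences of cochain complexes, commutativity of the induced ladder of long exact sequences (including the squares involving the connecting maps) is automatic from the standard naturality of the long exact sequence, so the five lemma applies without any further sign bookkeeping. (You are also right that the arrows labelled $f^\ast$ and $f^\sharp$ in the displayed diagram should point from the $(Y,B)$-complexes to the $(X,A)$-complexes, since $A_{PL}$ and $C^\ast$ are contravariant; this is a typo in the statement.)
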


\begin{proof}
In the case that both $A$ and $B$ are the empty set, our claims are contained in Theorem 10.15(ii) in \cite{fht}.
In the general case, we consider the diagram
\begin{center}
\begin{tikzcd}
0 \ar{r} & A_{PL}(X, A) \ar{r}{j^{\ast}} \ar[d, dashrightarrow, "{\oint_{(X, A)}}"] & A_{PL}(X) \ar{r}{i^{\ast}} \ar{d}{\oint_{X}}[swap]{\simeq} & A_{PL}(A) \ar{r} \ar{d}{\oint_{A}}[swap]{\simeq} & 0 \\
0 \ar{r} & C^{\ast}(X, A) \ar{r}{j^{\sharp}} & C^{\ast}(X) \ar{r}{i^{\sharp}} & C^{\ast}(A) \ar{r} & 0,
%\widehat{\tau}_{\geq k}^{D}C^{\ast} \ar{d}[swap]{(\vartheta_{\geq k}^{D}, \varepsilon_{D})} \ar{r}{\rho_{\geq k} \; \simeq} & \widehat{\sigma}_{\geq k}^{E}C^{\ast} \ar{d}{(\varphi_{\geq k}^{E}, \varepsilon_{E})}  \\
%C^{\ast} \oplus \mathbb{Q} \ar{r}{=} & C^{\ast} \oplus \mathbb{Q}
\end{tikzcd}
\end{center}
in which the horizontal sequences are the short exact sequences associated to the pair $(X, A)$, and the solid arrow square commutes.
Thus, we see that $\oint_{X}$ restricts to a homomorphism $\oint_{(X, A)} \colon A_{PL}(X, A) \rightarrow C^{\ast}(X, A)$ of cochain complexes.
By the five lemma, $\oint_{(X, A)}$ is a quasi-isomorphism.
Finally, diagram (\ref{diagram naturality of integration}) commutes because it can be obtained by restricting the morphisms in the commutative diagram
\begin{center}
\begin{tikzcd}
A_{PL}(X) \ar{r}{f^{\ast}} \ar{d}{\oint_{X}} & A_{PL}(Y) \ar{d}{\oint_{Y}} \\
C^{\ast}(X) \ar{r}{f^{\sharp}} & C^{\ast}(Y).
\end{tikzcd}
\end{center}
%has an inclusion into the analogous diagram for the absolute case that $A$ and $B$ are both empty.
%naturality of $\oint$ follows by restricting the maps in the \textcolor{red}{above} diagram of the absolute case to the relative cochain complexes.
\end{proof}

\subsection{Stokes' theorem}
For a normalized singular chain $\xi \in C_{\ast}(X)$, we use integration \ref{integration}) to define a linear form
\begin{align}\label{integration over chain}
\int_{\xi} \colon A_{PL}(X) \rightarrow \mathbb{Q}, \qquad \int_{\xi}x = (\oint_{X}x)(\xi).
\end{align}

The fact that integration (\ref{integration}) commutes with the differentials can be considered as an abstract version of Stokes' theorem as follows.

\begin{theorem}\label{stokes theorem}
%Given a pair $(X, A)$, let $i \colon A \rightarrow X$ and $j \colon (X, \emptyset) \rightarrow (X, A)$ denote the inclusions.
If $\xi \in C_{\ast}(X)$ such that $j_{\sharp}(\xi) \in C_{\ast}(X, A)$ is closed, then $\partial \xi \in C_{\ast}(X)$ is contained in $C_{\ast}(A) \subset C_{\ast}(X)$, and
$$
\int_{\xi} dx = \int_{\partial \xi} i^{\ast}(x), \qquad x \in A_{PL}(X).
$$
\end{theorem}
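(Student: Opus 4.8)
The plan is to reduce both assertions to the fact, recalled in \Cref{de rham}, that integration over simplices is a morphism of cochain complexes, together with its naturality under the inclusion $i\colon A\to X$.

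For the first assertion, observe that $j_{\sharp}\colon C_{\ast}(X)=C_{\ast}(X,\emptyset)\to C_{\ast}(X,A)$ is, by definition, the canonical projection onto the quotient complex $C_{\ast}(X)/C_{\ast}(A)$, hence a morphism of complexes whose kernel is exactly $C_{\ast}(A)$. Since $j_{\sharp}$ commutes with the differentials and $j_{\sharp}(\xi)$ is assumed closed, we get $j_{\sharp}(\partial\xi)=\partial(j_{\sharp}\xi)=0$, so $\partial\xi$ lies in $\ker j_{\sharp}=C_{\ast}(A)\subset C_{\ast}(X)$. I would then fix the unique $\eta\in C_{\ast}(A)$ with $i_{\sharp}(\eta)=\partial\xi$, so that the right-hand side of the claimed formula reads, by definition of $\int_{\partial\xi}$ on $A_{PL}(A)$, as $\int_{\partial\xi}i^{\ast}(x)=(\oint_{A}(i^{\ast}x))(\eta)$.

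For the integral identity, I would unwind both sides. Using the definition (\ref{integration over chain}) and the fact that $\oint_{X}$ commutes with the differentials (Theorem 10.15(ii) in \cite{fht}), one has $\int_{\xi}dx=(\oint_{X}(dx))(\xi)=(d(\oint_{X}x))(\xi)$, where on the right $d$ denotes the differential of $C^{\ast}(X)$. For the other side, naturality of integration over simplices for $i\colon A\to X$ gives $\oint_{A}\circ i^{\ast}=i^{\sharp}\circ\oint_{X}$, whence $\int_{\partial\xi}i^{\ast}(x)=(\oint_{A}(i^{\ast}x))(\eta)=(i^{\sharp}(\oint_{X}x))(\eta)=(\oint_{X}x)(i_{\sharp}\eta)=(\oint_{X}x)(\partial\xi)$. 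Thus the asserted equality becomes precisely the compatibility relation $(d(\oint_{X}x))(\xi)=(\oint_{X}x)(\partial\xi)$ between the cochain differential of $C^{\ast}(X)$ and the boundary operator of $C_{\ast}(X)$, which is built into the cochain-morphism property of $\oint_{X}$.

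The step that requires care is the sign bookkeeping: the differential of $C^{\ast}(X)$ carries the Koszul sign $d\varphi=-(-1)^{\deg\varphi}\varphi\circ d$, and the classical Stokes sign for a single simplex must be matched against it; I would handle this not by re-deriving the normalization of $\oint_{X}$ but simply by invoking that $\oint_{X}$ is a morphism of cochain complexes with the conventions of \cite{fht}, so that the identity comes out in the stated form. Everything else is formal: the first assertion is the exactness of $0\to C_{\ast}(A)\to C_{\ast}(X)\to C_{\ast}(X,A)\to 0$, and the second is the definition of $\int$ combined with the commutativity of integration with differentials and with $i^{\ast}$, both furnished by \Cref{de rham} and \cite{fht}.
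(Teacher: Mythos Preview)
Your proposal is correct and follows essentially the same route as the paper's proof: the first assertion is exactness of $0\to C_{\ast}(A)\to C_{\ast}(X)\to C_{\ast}(X,A)\to 0$, and the second is obtained by unwinding the definition of $\int_{\xi}$, using that $\oint_{X}$ is a morphism of cochain complexes, and invoking naturality $\oint_{A}\circ i^{\ast}=i^{\sharp}\circ\oint_{X}$. The paper simply chains these equalities in one line, while you organise the same computation around an auxiliary element $\eta$ and add an explicit remark on the sign convention; the content is identical.
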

\begin{proof}
For $\xi \in C_{\ast}(X)$ such that $j_{\sharp}(\xi) \in C_{\ast}(X, A)$ is closed, we have $j_{\sharp}(\partial\xi) = \partial(j_{\sharp}(\xi)) = 0$.
Thus, by exactness of
$$
0 \rightarrow C_{\ast}(A) \xrightarrow{i_{\sharp}} C_{\ast}(X) \xrightarrow{j_{\sharp}} C_{\ast}(X, A) \rightarrow 0,
$$
we may consider $\partial\xi \in C_{\ast}(X)$ as an element of $C_{\ast}(A)$.
Then, for $x \in A_{PL}(X)$ we obtain
\begin{align*}
\int_{\xi}dx &= (\oint_{X}dx)(\xi) = (d \oint_{X}x)(\xi) = (\oint_{X}x)(\partial\xi) \\
&= (\oint_{X}x)(i_{\sharp}(\partial\xi)) = (i^{\sharp}\oint_{X}x)(\partial\xi) = (\oint_{A}i^{\ast} x)(\partial\xi) = \int_{\partial \xi} i^{\ast} x.
\end{align*}
\end{proof}

\subsection{Cohomological multiplicativity}
While integration (\ref{integration}) is not a quasi-isomorphism of cochain \emph{algebras}, it induces an algebra isomorphism on cohomology, which we denote by the same symbol $\oint$.

\begin{theorem}\label{multiplicativity}
For any pair $(X, A)$ we have
%topological space $X$ with subspace $A \subset X$, and elements $x, y \in H^{\ast}(A_{PL}(X, A))$, $z \in H^{\ast}(A_{PL}(X))$, we have
\begin{align*}
\oint_{(X, A)}xy &= (\oint_{(X, A)}x) \cup (\oint_{(X, A)}y), \qquad x, y \in H^{\ast}(A_{PL}(X, A)), \\
\oint_{(X, A)}xy &= (\oint_{(X, A)}x) \cup (\oint_{X}y), \qquad x \in H^{\ast}(A_{PL}(X, A)), y \in H^{\ast}(A_{PL}(X)).
\end{align*}
\end{theorem}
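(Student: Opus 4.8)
The plan is to deduce both identities from the \emph{absolute} de Rham multiplicativity of $\oint_X$, i.e. the fact (contained in Theorem~10.15 of \cite{fht}) that $\oint_X$ induces an isomorphism of graded algebras $H(A_{PL}(X)) \xrightarrow{\cong} H^{\ast}(X)$. The guiding observations are that $j^{\ast}\colon A_{PL}(X,A) \hookrightarrow A_{PL}(X)$ and $j^{\sharp}\colon C^{\ast}(X,A) \hookrightarrow C^{\ast}(X)$ are the inclusions of the ideals $\ker i^{\ast}$ and $\ker i^{\sharp}$, and that — by the construction of $\oint_{(X,A)}$ in the proof of \Cref{de rham} — $\oint_{(X,A)}$ is simply the corestriction of $\oint_X$ to these subcomplexes; in particular $j^{\sharp}\circ\oint_{(X,A)} = \oint_X\circ j^{\ast}$, and the ideal/module products in $A_{PL}(X,A)$ and $C^{\ast}(X,A)$ are the restrictions of the products in $A_{PL}(X)$ and $C^{\ast}(X)$.

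I would first reduce the first formula to the second. For cocycles $\alpha,\beta\in A_{PL}(X,A)$, apply the second formula to $\alpha$ and $j^{\ast}\beta\in A_{PL}(X)$: since $\alpha\cdot j^{\ast}\beta = \alpha\beta$ inside the ideal, and $\oint_X[j^{\ast}\beta]$ is the image of $\oint_{(X,A)}[\beta]$ under $H(j^{\sharp})$, the right-hand side collapses to $(\oint_{(X,A)}[\alpha])\cup(\oint_{(X,A)}[\beta])$. So it remains to prove the second formula. Fix cocycles $\alpha\in A_{PL}(X,A)$ and $\beta\in A_{PL}(X)$. Both $\oint_X(\alpha\beta) = \oint_{(X,A)}(\alpha\beta)$ and $(\oint_X\alpha)\cup(\oint_X\beta) = (\oint_{(X,A)}\alpha)\cup(\oint_X\beta)$ are then cocycles lying in the subcomplex $C^{\ast}(X,A)$ (using $\oint_X\alpha,\oint_X(\alpha\beta)\in\ker i^{\sharp}$), and by the absolute multiplicativity their difference is a coboundary $dc$ with $c\in C^{\ast}(X)$. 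The one point that must be added is that $c$ may be chosen inside $C^{\ast}(X,A)$; granting this, the two cocycles represent the same class in $H^{\ast}(C^{\ast}(X,A))$, which is exactly the second formula.

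To produce such a $c$ I would use that the absolute comparison is realized by a \emph{natural} cochain homotopy $h$, namely a natural transformation $A_{PL}(-)\otimes A_{PL}(-)\to C^{\ast}(-)$ of degree $-1$ satisfying $\oint_X(xy)-(\oint_X x)\cup(\oint_X y) = dh_X(x\otimes y)+h_X(dx\otimes y)\pm h_X(x\otimes dy)$; for cocycles this gives $dc$ with $c = h_X(\alpha\otimes\beta)$. Naturality with respect to the inclusion $i\colon A\hookrightarrow X$ yields $i^{\sharp}h_X(\alpha\otimes\beta) = h_A(i^{\ast}\alpha\otimes i^{\ast}\beta) = 0$, because $\alpha\in A_{PL}(X,A) = \ker i^{\ast}$. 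Hence $c = h_X(\alpha\otimes\beta)\in\ker i^{\sharp} = C^{\ast}(X,A)$, as required. (Only linearity of $h$ in each tensor slot is used, which is automatic, so this argument covers the mixed products appearing in the theorem as well.)

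I expect the genuine obstacle to be isolating this natural homotopy $h$. It is implicit in the usual proof that $\oint$ is multiplicative on cohomology — morally it is a simplicial Stokes/Eilenberg--Zilber argument comparing the fibrewise wedge product of polynomial forms with the Alexander--Whitney cup product — but the functor $A_{PL}(-)\otimes A_{PL}(-)$ is not free on the standard simplices (it embeds in an infinite product over all simplices rather than a direct sum), so the bare acyclic-models principle does not apply verbatim. I would therefore either cite the relevant construction from \cite{fht}, or build $h$ by hand: on each $\Delta^{n}$ both sides of the comparison are cohomologically trivial for degree reasons, since $H^{>0}(A_{PL}(\Delta^{n})) = H^{>0}(C^{\ast}(\Delta^{n})) = 0$ and the degree-$0$ parts match, so one can choose contracting homotopies there and patch them compatibly along face and degeneracy maps. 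An alternative is to rerun the absolute proof directly for the pair $(X,A)$ by a relative acyclic-models argument, but the bookkeeping of the relevant relative models is heavier, so I would prefer the reduction above.
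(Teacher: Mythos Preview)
Your strategy is sound in outline but leaves exactly the step you flag as ``the genuine obstacle'' unresolved, and the paper takes a route that avoids that obstacle entirely.

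The paper does \emph{not} produce a natural chain homotopy between $\oint(\,\cdot\,\cdot\,)$ and $(\oint\,\cdot\,)\cup(\oint\,\cdot\,)$. Instead it uses the zigzag of simplicial cochain algebras from \cite[p.~125, p.~130]{fht},
\[
C_{PL} \xrightarrow{\ \beta\ } C_{PL}\otimes A_{PL} \xleftarrow{\ \gamma\ } A_{PL}, \qquad C_{PL}\otimes A_{PL} \xrightarrow{\ \alpha\ } C_{PL},
\]
where $\beta$ and $\gamma$ are morphisms of simplicial cochain \emph{algebras} (hence multiplicative on the nose, including for the mixed ideal/module products), while $\alpha$ is only a morphism of complexes satisfying $\alpha\beta=\mathrm{id}$ and $\alpha\gamma=\oint$ on cohomology. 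Because $A_{PL}$, $C_{PL}$, and $C_{PL}\otimes A_{PL}$ are all extendable, these identities hold for the pair $(X,A)$ as well. One then computes, for $x\in H^{\ast}(A_{PL}(X,A))$ and $y\in H^{\ast}(A_{PL}(X))$,
\[
\oint_{(X,A)}(xy)=\alpha_{(X,A)}\gamma_{(X,A)}(xy)=\alpha_{(X,A)}\bigl(\gamma_{(X,A)}(x)\,\gamma_X(y)\bigr)
=\alpha_{(X,A)}\beta_{(X,A)}\bigl((\oint_{(X,A)}x)\cup(\oint_X y)\bigr),
\]
using first that $\gamma$ is multiplicative, then that $\gamma=\beta\circ\oint$ on cohomology (since $\alpha$ inverts $\beta$), then that $\beta$ is multiplicative. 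No homotopy is needed.

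Your argument, by contrast, is correct \emph{conditional on} the existence of the natural homotopy $h$, and your naturality computation $i^{\sharp}h_X(\alpha\otimes\beta)=h_A(i^{\ast}\alpha\otimes i^{\ast}\beta)=0$ is exactly right. But neither of the two avenues you propose for obtaining $h$ quite lands. You cannot cite \cite{fht}: their proof of absolute multiplicativity is precisely the zigzag above and never constructs $h$. And your hand-built alternative---choose contracting homotopies on each $\Delta^n$ and ``patch them compatibly along face and degeneracy maps''---is the very acyclic-models step you correctly observed does not apply verbatim, because $A_{PL}\otimes A_{PL}$ is not corepresentable in the required sense; compatibility across faces and degeneracies is not automatic and is the whole content of the argument. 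So as written there is a gap. The cleanest fix is simply to adopt the paper's zigzag: it already lives at the level of pairs via extendability, gives the mixed formula directly, and your reduction of the first formula to the second then goes through unchanged.
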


\begin{proof}
We only prove the second formula, while the proof of the first formula is very similar.

It follows from the Remark in \cite[p. 130]{fht} that for any pair $(X, A)$ there is a commutative diagram of graded vector spaces
\begin{center}
\begin{tikzcd}
H^{\ast}(X, A) \ar{r}{\beta_{(X, A)}}[swap]{\cong} \ar{dr}[swap]{=} & H^{\ast}((C_{PL} \otimes A_{PL})(X, A)) \ar{d}{\cong}[swap]{\alpha_{(X, A)}} & H^{\ast}(A_{PL}(X, A)) \ar{l}{\cong}[swap]{\gamma_{(X, A)}} \ar{dl}{\oint_{(X, A)}}[swap]{\cong} \\
 & H^{\ast}(X, A), & 
%\widehat{\tau}_{\geq k}^{D}C^{\ast} \ar{d}[swap]{(\vartheta_{\geq k}^{D}, \varepsilon_{D})} \ar{r}{\rho_{\geq k} \; \simeq} & \widehat{\sigma}_{\geq k}^{E}C^{\ast} \ar{d}{(\varphi_{\geq k}^{E}, \varepsilon_{E})}  \\
%C^{\ast} \oplus \mathbb{Q} \ar{r}{=} & C^{\ast} \oplus \mathbb{Q}
\end{tikzcd}
\end{center}

where we have also used the identification $C_{PL}(-) = C^{\ast}(-)$ from Theorem 10.9(i) in \cite{fht}, as well as the fact that $A_{PL}$, $C_{PL}$ and $C_{PL} \otimes A_{PL}$ are extendable simplicial cochain complexes by Lemma 10.7(iii), Lemma 10.12(ii) and Lemma 10.12(iii) in \cite{fht}, respectively.

Since $\beta \colon C_{PL} \rightarrow C_{PL} \otimes A_{PL}$ and $\gamma \colon A_{PL} \rightarrow C_{PL} \otimes A_{PL}$ are quasi-isomorphisms of extendable simplicial cochain \emph{algebras} (see \cite[p. 125]{fht}), we have
\begin{align*}
\beta_{(X, A)}(x \cup y) &= \beta_{(X, A)}(x) \beta_{X}(y), \qquad x \in H^{\ast}(X, A), y \in H^{\ast}(X), \\
\gamma_{(X, A)}(x y) &= \gamma_{(X, A)}(x) \gamma_{X}(y), \qquad x \in H^{\ast}(A_{PL}(X, A)), y \in H^{\ast}(A_{PL}(X)).
\end{align*}
(Note however that $\alpha \colon C_{PL} \otimes A_{PL} \rightarrow C_{PL}$ is only a quasi-isomorphism of simplicial cochain complexes.)

All in all, we obtain
\begin{align*}
\oint_{(X, A)}xy = \alpha_{(X, A)}(\gamma_{(X, A)}(xy)) = \alpha_{(X, A)}(\gamma_{(X, A)}(x)\gamma_{X}(y)) \\
= \alpha_{(X, A)}(\beta_{(X, A)}(\oint_{(X, A)}x)\beta_{X}(\oint_{X}y)) \\
= \alpha_{(X, A)}(\beta_{(X, A)}((\oint_{(X, A)}x) \cup (\oint_{X}y))) = (\oint_{(X, A)}x) \cup (\oint_{X}y).
\end{align*}
\end{proof}

\subsection{Poincar\'{e}-Lefschetz duality}\label{section poincare lefschetz duality}
Let $(X, A) = (M, \partial M)$ be an $n$-dimensional compact topological manifold with boundary.
We assume that $(M, \partial M)$ is $\mathbb{Q}$-oriented, and denote by $[M, \partial M] \in H_{n}(M, \partial M)$ the corresponding fundamental class.
%\textcolor{red}{(avoid $\mathbb{Q}$ as coefficients of homology)}
In the following, we discuss a rational homotopy theory version of the nondegenerate intersection pairing given by classical Poincar\'{e}-Lefschetz duality
\begin{align}\label{poincare lefschetz duality}
H^{r}(M) \times H^{n-r}(M, \partial M) \rightarrow \mathbb{Q}, \qquad (x, y) \mapsto \langle x \cup y, [M, \partial M]\rangle.
\end{align}

\begin{theorem}\label{poincare lefschetz duality theorem}
Let $\mu \in C_{n}(M)$ be a normalized singular chain whose image under the map $j_{\sharp} \colon C_{n}(M) \rightarrow C_{n}(M, \partial M)$
%induced by the inclusion of pairs $j \colon (M, \emptyset) \rightarrow (M, \partial M)$
represents the fundamental class $[M, \partial M] \in H_{n}(M, \partial M)$.
Then, multiplication in $A_{PL}(M)$ followed by integration over $\mu$ (see (\ref{integration over chain})) induces a nondegenerate bilinear form
\begin{align*}
\int_{\mu} \colon H^{r}(A_{PL}(M)) \times H^{n-r}(A_{PL}(M, \partial M)) \rightarrow \mathbb{Q}, \qquad ([\alpha], [\beta]) \mapsto \int_{\mu} \alpha \cdot j^{\ast}(\beta).
\end{align*}
\end{theorem}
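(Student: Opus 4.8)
The plan is to identify the bilinear form with the classical Poincar\'{e}--Lefschetz intersection pairing \eqref{poincare lefschetz duality}, transported to the left-hand side along the cohomology isomorphisms induced by the integration quasi-isomorphisms of \Cref{de rham}, and then to conclude from classical Poincar\'{e}--Lefschetz duality together with the compactness of $M$.

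First I would unwind the formula defining the form. Since $j^{\ast} \colon A_{PL}(M, \partial M) \to A_{PL}(M)$ is the inclusion of the ideal $A_{PL}(M, \partial M) = \ker i^{\ast}$, for closed forms $\alpha \in A_{PL}(M)$ and $\beta \in A_{PL}(M, \partial M)$ the product $\alpha\beta$ formed in $A_{PL}(M)$ is a closed element of that ideal, and $\alpha \cdot j^{\ast}(\beta) = j^{\ast}(\alpha\beta)$. By the naturality of integration with respect to maps of pairs (\Cref{de rham}), applied to the identity of $M$ regarded as the map of pairs $j \colon (M, \emptyset) \to (M, \partial M)$, one has $\oint_{M} \circ j^{\ast} = j^{\sharp} \circ \oint_{(M, \partial M)}$. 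Combining this with the tautological adjunction between the inclusion $j^{\sharp}$ of relative cochains and the projection $j_{\sharp}$ onto relative chains, and with the hypothesis that $j_{\sharp}(\mu)$ represents $[M, \partial M]$, I obtain
\[
\int_{\mu} \alpha \cdot j^{\ast}(\beta) = \Bigl(\oint_{M} j^{\ast}(\alpha\beta)\Bigr)(\mu) = \Bigl(\oint_{(M, \partial M)}(\alpha\beta)\Bigr)(j_{\sharp}\mu) = \bigl\langle \oint_{(M, \partial M)}\bigl([\alpha]\cdot[\beta]\bigr),\, [M, \partial M]\bigr\rangle,
\]
the Kronecker pairing between $H^{n}(M, \partial M)$ and $H_{n}(M, \partial M)$, where $[\alpha]\cdot[\beta] \in H^{n}(A_{PL}(M, \partial M))$ denotes the product coming from the $H^{\ast}(A_{PL}(M))$-module structure. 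In particular the right-hand side depends only on $[\alpha]$ and $[\beta]$, so the form in the statement is well defined.

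Next I would feed in cohomological multiplicativity: by the second formula of \Cref{multiplicativity}, $\oint_{(M, \partial M)}\bigl([\beta]\cdot[\alpha]\bigr) = \oint_{(M, \partial M)}[\beta] \cup \oint_{M}[\alpha]$. Reordering $[\alpha]\cdot[\beta]$ into $[\beta]\cdot[\alpha]$ and commuting the cup product evaluated against $[M, \partial M]$ each introduce a Koszul sign $(-1)^{r(n-r)}$, and these cancel, so that
\[
\int_{\mu} \alpha \cdot j^{\ast}(\beta) = \bigl\langle \oint_{M}[\alpha] \cup \oint_{(M, \partial M)}[\beta],\, [M, \partial M]\bigr\rangle.
\]
Thus the form in the statement is precisely the classical intersection pairing \eqref{poincare lefschetz duality} precomposed in its two arguments with the isomorphisms $\oint_{M} \colon H^{r}(A_{PL}(M)) \xrightarrow{\cong} H^{r}(M)$ and $\oint_{(M, \partial M)} \colon H^{n-r}(A_{PL}(M, \partial M)) \xrightarrow{\cong} H^{n-r}(M, \partial M)$ of \Cref{de rham}. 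Since $M$ is compact, these cohomology spaces are finite-dimensional and the classical pairing is nondegenerate by Poincar\'{e}--Lefschetz duality; nondegeneracy of the form in the statement follows.

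The only genuine subtlety I expect is the bookkeeping: ensuring that $j^{\ast}$ and $j^{\sharp}$ are handled as inclusions of ideals so that the module structures match those in \Cref{multiplicativity}, identifying the correct instance of the naturality square of \Cref{de rham}, and checking that the two Koszul signs cancel. The well-definedness obtained in the second paragraph could alternatively be proved directly from Stokes' theorem (\Cref{stokes theorem}) together with the identity $i^{\ast} \circ j^{\ast} = 0$.
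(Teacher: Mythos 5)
Your proposal is correct and follows essentially the same route as the paper: rewrite $\int_{\mu}\alpha \cdot j^{\ast}(\beta)$ via the naturality square of \Cref{de rham} as a Kronecker pairing against $j_{\sharp}\mu = [M,\partial M]$, apply \Cref{multiplicativity} to factor through $\oint_{M}$ and $\oint_{(M,\partial M)}$, and then invoke classical Poincar\'e--Lefschetz duality. Two minor points of difference, both improvements in exposition: you obtain well-definedness for free from the reduction itself rather than running a separate Stokes'-theorem check as the paper does first, and you explicitly verify that the two Koszul signs from reordering $[\alpha]\cdot[\beta]$ and commuting the cup product cancel, a bookkeeping step the paper passes over silently when it applies the second formula of \Cref{multiplicativity} with the arguments in the opposite order.
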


\begin{proof}
To show that the bilinear form
\begin{align*}
\int_{\mu} \colon A_{PL}^{r}(M) \times A_{PL}^{n-r}(M, \partial M) \rightarrow \mathbb{Q}, \qquad (\alpha, \beta) \mapsto \int_{\mu} \alpha \cdot j^{\ast}(\beta),
\end{align*}
induces a bilinear form $\int_{\mu}$ on cohomology, we have to show that for all closed elements $\alpha, \alpha' \in A_{PL}^{r}(M)$ with $\alpha' - \alpha = d \eta$ for some $\eta \in A_{PL}^{r-1}(M)$, and all closed elements $\beta, \beta' \in A_{PL}^{n-r}(M, \partial M)$ with $\beta' - \beta = d \omega$ for some $\omega \in A_{PL}^{n-r-1}(M, \partial M)$, we have $\int_{\mu}(\alpha', \beta') = \int_{\mu}(\alpha, \beta)$.
It suffices to consider the case that $\alpha = \alpha'$ or $\beta = \beta'$.
If $\beta = \beta'$, then Stokes' theorem (\Cref{stokes theorem}) and the fact that $i^{\ast} \circ j^{\ast} = 0$ imply
\begin{align*}
\int_{\mu}(\alpha', \beta) - \int_{\mu}(\alpha, \beta) = \int_{\mu} d(\eta \cdot j^{\ast}(\beta)) = \int_{\partial\mu} i^{\ast}(\eta \cdot j^{\ast}(\beta)) = 0.
\end{align*}
%where $i \colon \partial M \rightarrow M$ denotes the inclusion, and the last equality follows from the fact that .
Similarly, we can show that $\int_{\mu}(\alpha, \beta') = \int_{\mu}(\alpha, \beta)$ if $\alpha = \alpha'$.

We prove that the bilinear form $\int_{\mu}$ induced on cohomology is nondegenerate by reducing it to classical Poincar\'{e}-Lefschetz duality (\ref{poincare lefschetz duality}).
By the de Rham theorem (\Cref{de rham}), we have for $\alpha \in A_{PL}(M)$ and $\beta \in A_{PL}(M, \partial M)$ that
\begin{align*}
\int_{\mu} \alpha \cdot j^{\ast}(\beta) = \int_{\mu} j^{\ast}(\alpha \cdot \beta) = (j^{\sharp} \oint_{(M, \partial M)}\alpha \cdot \beta)(\mu) = (\oint_{(M, \partial M)}\alpha \cdot \beta)(j_{\sharp}\mu),
\end{align*}
where we note that $\alpha \cdot \beta \in A_{PL}(M, \partial M)$ and $j^{\ast}(\alpha \cdot \beta) = \alpha \cdot j^{\ast}(\beta)$, which follows easily from the short exact sequence $0 \rightarrow A_{PL}(M, \partial M) \rightarrow A_{PL}(M) \rightarrow A_{PL}(\partial M) \rightarrow 0$.
Assuming that $\alpha$ and $\beta$ are closed, and using that $j_{\sharp}\mu$ is closed and represents $[j_{\sharp}\mu] = [M, \partial M]$, as well as cohomological multiplicativity (\Cref{multiplicativity}), we can pass to (co)homology and obtain
\begin{align*}
(\oint_{(M, \partial M)}\alpha \cdot \beta)(j_{\sharp}\mu) = \langle [\oint_{(M, \partial M)}\alpha \cdot \beta], [M, \partial M] \rangle \\
= \langle \oint_{(M, \partial M)}\left[\alpha\right] \cdot \left[\beta\right], [M, \partial M] \rangle = \langle \oint_{M}\left[\alpha\right] \cup \oint_{(M, \partial M)}\left[\beta\right], [M, \partial M] \rangle.
\end{align*}
Since $\oint_{M}$ and $\oint_{(M, \partial M)}$ are isomorphisms on cohomology by the de Rham theorem (\Cref{de rham}), we have reduced our bilinear form $\int_{\mu}$ to the nondegenerate bilinear form given by Poincar\'{e}-Lefschetz duality (\ref{poincare lefschetz duality}).
\end{proof}

%\newpage
\section{Cotruncation of commutative cochain algebras}\label{Cotruncation of commutative cochain algebras}
Throughout this section,
%let $k > 0$ be an integer, and
let $C^{\ast}$ be a commutative cochain algebra with $H^{0}(C^{\ast}) = \mathbb{Q}$.
%we study cotruncations of commutative cochain algebras, and consider augmented cotruncations in order to relate them to cotruncation complexes, which do not have a unit
The results of this section will be applied in later sections 
%the proofs of \Cref{main result B}, \Cref{main result} (see \Cref{proof of main result}), and \Cref{de rham theorem}
to cotruncations of the cochain algebra $C^{\ast} = A_{PL}(L)$ associated to the connected link $L$ of an isolated singularity in pseudomanifold.

\subsection{Truncation and cotruncation}
Let $k>0$ be an integer.
We define the \emph{$k$-truncation cochain complex} of $C^{\ast}$ to be the subcomplex of $C^{\ast}$ given by
$$
\tau_{< k}C^{\ast} \colon \dots \xrightarrow{d} C^{k-2} \xrightarrow{d} C^{k-1} \xrightarrow{d|} \operatorname{im}(d \colon C^{k-1} \rightarrow C^{k}) \rightarrow 0 \rightarrow 0 \rightarrow \dots.
$$
We observe that the canonical inclusion $\vartheta_{<k} \colon \tau_{< k}C^{\ast} \rightarrow C^{\ast}$ induces isomorphisms $H^{r}(\vartheta_{<k}) \colon H^{r}(\tau_{< k}C^{\ast}) \xrightarrow{\cong} H^{r}(C^{\ast})$ for $r < k$, and we have $H^{r}(\tau_{< k}C^{\ast}) = 0$ for $r \geq k$.
However, the inclusion $\tau_{< k}C^{\ast} \rightarrow C^{\ast}$ is in general not a morphism of commutative cochain algebras because $\operatorname{im}(d \colon C^{k-1} \rightarrow C^{k})$ might not be closed under multiplication with elements in $C^{0}$.

In contrast to truncations, it turns out that cotruncations can be studied within the category of commutative cochain algebras.

\begin{definition}\label{definition cotruncation}
A \emph{(cohomological) $k$-cotruncation} of $C^{\ast}$ is a commutative cochain algebra $B^{\ast}$ with $H^{0}(B^{\ast}) = \mathbb{Q}$ together with a morphism $\beta \colon B^{\ast} \rightarrow C^{\ast}$ such that $H^{r}(B^{\ast}) = 0$ for $0 < r < k$, and $H^{r}(\beta) \colon H^{r}(B^{\ast}) \xrightarrow{\cong} H^{r}(C^{\ast})$ for $r \geq k$.
An \emph{augmented $k$-cotruncation} of $C^{\ast}$ is a morphism of the form $(\beta, \varepsilon_{B}) \colon B^{\ast} \rightarrow C^{\ast} \oplus \mathbb{Q}$, where $\beta \colon B^{\ast} \rightarrow C^{\ast}$ is a $k$-cotruncation of $C^{\ast}$.
\end{definition}

The following example shows that cotruncations do always exist.

\begin{example}[Standard cotruncation]\label{example standard cotruncation}
Given a direct sum decomposition $C^{k} = D \oplus \operatorname{im}(d \colon C^{k-1} \rightarrow C^{k})$, we can consider the \emph{$k$-cotruncation cochain complex}
%denote by $\vartheta_{\geq k}^{D} \colon \tau_{\geq k}^{D}C^{\ast} \rightarrow C^{\ast}$ the inclusion of the \emph{cotruncation cochain complex}
$$
\tau_{\geq k}^{D}C^{\ast} \colon \dots \xrightarrow{} 0 \rightarrow 0 \rightarrow D \xrightarrow{d|} C^{k+1} \xrightarrow{d} C^{k+2} \xrightarrow{d} \dots,
$$
where $D$ is placed in degree $k$.
Using multiplication in $C^{\ast}$, we observe that $\widehat{\tau}_{\geq k}^{D}C^{\ast} = \tau_{\geq k}^{D}C^{\ast} \oplus \mathbb{Q}$ is a commutative cochain algebra with unique augmentation $\varepsilon_{D} \colon \widehat{\tau}_{\geq k}^{D}C^{\ast} \rightarrow \mathbb{Q}$.
Moreover, the canonical inclusion $\vartheta_{\geq k}^{D} \colon \widehat{\tau}_{\geq k}^{D}C^{\ast} \rightarrow C^{\ast}$ defines a $k$-cotruncation of $C^{\ast}$, which we call a \emph{standard $k$-cotruncation} of $C^{\ast}$.
The inclusion $(\vartheta_{\geq k}^{D}, \varepsilon_{D}) \colon \widehat{\tau}_{\geq k}^{D}C^{\ast} \rightarrow C^{\ast} \oplus \mathbb{Q}$ is an augmented $k$-cotruncation of $C^{\ast}$, which we call an \emph{augmented standard $k$-cotruncation} of $C^{\ast}$.

% by writing $\vartheta_{\geq k} \colon \tau_{\geq k}C^{\ast} \rightarrow C^{\ast}$.
\end{example}

\begin{remark}
While the cotruncation cochain complex $\tau_{\geq k}^{D}C^{\ast}$ in \Cref{example standard cotruncation} is closed under the multiplication inherited from $C^{\ast}$, it might not be an ideal in $C^{\ast}$ because $D$ might not be closed under multiplication with elements in $C^{0}$.
\end{remark}

\subsection{Weak equivalences}\label{Weak equivalences}
Let $A$ be a commutative cochain algebra.
A commutative cochain algebra $B$ is said to be over $A$ if $B$ is equipped with a morphism $B \rightarrow A$.
Moreover, a morphism $f \colon B \rightarrow B'$ of commutative cochain algebras is said to be over $A$ if $B$ and $B'$ are over $A$, and $f$ is compatible with the structure morphisms $B \rightarrow A$ and $B' \rightarrow A$.
Let $f_{1} \colon B_{1} \rightarrow B_{1}'$ and $f_{2} \colon B_{2} \rightarrow B_{2}'$ be morphisms over $A$.
A quasi-isomorphism over $A$ from $f_{1}$ to $f_{2}$
%(written as $f_{1} \xrightarrow{\simeq}_{A} f_{2}$)
is a pair $(\beta, \beta')$ of quasi-isomorphisms $\beta \colon B_{1} \xrightarrow{\simeq} B_{2}$ and $\beta' \colon B_{1}' \xrightarrow{\simeq} B_{2}'$ over $A$ such that $\beta' \circ f_{1} = f_{2} \circ \beta$.
%If $B_{1} = B_{2} = B$ and $\beta = \operatorname{id}_{B}$, then we call $(\alpha, \operatorname{id}_{B})$ a quasi-isomorphism over $B$.
%We call $f_{1}$ quasi-isomorphic (over $B$) to $f_{2}$ if there exists a quasi-isomorphism (over $B$) between $f_{1}$ and $f_{2}$.
%\textcolor{red}{(is this notion necessary? can the arrow have both directions?)}
A weak equivalence over $A$ between $f_{1}$ to $f_{2}$ is a chain $f_{1} \xrightarrow{\simeq} g_{1} \xleftarrow{\simeq} \dots \xrightarrow{\simeq} g_{r} \xleftarrow{\simeq} f_{2}$ of quasi-isomorphisms over $A$, where $g_{1}, \dots, g_{r}$ are suitable morphisms over $A$.
We call $f_{1}$ and $f_{2}$ weakly equivalent over $A$ if they are connected by such a chain.

%If $B_{1} = B_{2} = B$ and the chain consists of quasi-isomorphisms over $B$, then we call the chain a weak equivalence over $B$.
%We call $f_{1}$ weakly equivalent (over $B$) to $f_{2}$ if there exists a weak equivalence (over $B$) from $f_{1}$ to $f_{2}$.
%
%
%A morphism $f \colon A \rightarrow B$ of commutative cochain algebras is called augmentation preserving if $A$ and $B$ are augmented, and $f$ commutes with the augmentations.
%
%%with augmentations $\varepsilon_{A} \colon A \rightarrow \mathbb{Q}$ and $\varepsilon_{B} \colon B \rightarrow \mathbb{Q}$ is called augmentation preserving if $\varepsilon_{B} \circ f = \varepsilon_{A}$.
%
%A morphism $f \colon A \rightarrow B$ of augmented cochain algebras with augmentations $\varepsilon_{A} \colon A \rightarrow \mathbb{Q}$ and $\varepsilon_{B} \colon B \rightarrow \mathbb{Q}$ is called augmentation preserving if $\varepsilon_{B} \circ f = \varepsilon_{A}$.
%If $f_{1} \colon A_{1} \rightarrow B_{1}$ and $f_{2} \colon A_{2} \rightarrow B_{2}$ are augmentation preserving morphisms of augmented cochain algebras, then we call a weak equivalence from $f_{1}$ to $f_{2}$ augmentation preserving if all morphisms in the chain are augmented, and all quasi-isomorphisms are augmentation preserving.

In the following result, we consider augmented cotruncations $B^{\ast} \rightarrow C^{\ast} \oplus \mathbb{Q}$ of $C^{\ast}$ (see \Cref{definition cotruncation}) as morphisms over $C^{\ast} \oplus \mathbb{Q}$ by means of the identity morphism on $C^{\ast} \oplus \mathbb{Q}$.

\begin{proposition}\label{proposition weak equivalence of cotruncations}
Let $k> 0$ be an integer.
Any two augmented $k$-cotruncations of $C^{\ast}$ are weakly equivalent over $C^{\ast} \oplus \mathbb{Q}$.
%Up to weak equivalence over  there is a unique .
%Any two augmented $k$-cotruncations of $C^{\ast}$ are weakly equivalent over $C^{\ast} \oplus \mathbb{Q}$.
%and $\varepsilon_{B} \colon B^{\ast} \rightarrow \mathbb{Q}$ is an augmentation, are 
% to a standard $k$-cotruncation with augmentation $(\vartheta_{\geq k}, \varepsilon) \colon \widehat{\tau}_{\geq k}C^{\ast} \rightarrow C^{\ast} \oplus \mathbb{Q}$.
%Let $C^{\ast}$ be a commutative cochain algebra such that $H^{0}(C^{\ast}) = \mathbb{Q}$.
%of a commutative cochain algebra $C^{\ast}$, there exists a commutative diagram of commutative cochain algebras of the form
%$k$-cotruncation $\vartheta_{\geq k} \colon \tau_{\geq k}C^{\ast} \oplus \mathbb{Q} \rightarrow C^{\ast}$ and morphisms  \dots such that the diagram
\end{proposition}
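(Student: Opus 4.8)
The plan is to compare two given augmented $k$-cotruncations $(\beta_1,\varepsilon_1)\colon B_1^{\ast}\to C^{\ast}\oplus\mathbb{Q}$ and $(\beta_2,\varepsilon_2)\colon B_2^{\ast}\to C^{\ast}\oplus\mathbb{Q}$ by first replacing each by a convenient model and then producing a single connecting quasi-isomorphism through an obstruction argument. First I would replace $B_1^{\ast}$ by a Sullivan model $m_1\colon(\Lambda W_1,d)\xrightarrow{\simeq}B_1^{\ast}$ with $W_1=W_1^{\geq1}$, chosen minimal when $k\geq2$ (which is possible since then $H^{1}(B_1^{\ast})=0$), cf.\ \cite{fht}; endowing $\Lambda W_1$ with the composed structure maps $\beta_1m_1$ and $\varepsilon_1m_1$ makes $m_1$ a quasi-isomorphism over $C^{\ast}\oplus\mathbb{Q}$, and since $(\Lambda W_1)^{0}=\mathbb{Q}$ the map $\varepsilon_1m_1$ is the \emph{unique} augmentation of $\Lambda W_1$. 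The feature I would extract here is that \emph{every generator of $W_1$ has degree $\geq k$}: this is automatic for $k=1$, and for $k\geq2$ minimality gives $d(W_1)\subseteq\Lambda^{\geq2}W_1$ and $W_1^{1}=0$, so if $m_0$ is the lowest degree with $W_1^{m_0}\neq0$ then $(\Lambda W_1)^{r}=0$ for $0<r<m_0$ and $H^{m_0}(\Lambda W_1)=W_1^{m_0}\neq0$ by a degree count, which forces $m_0\geq k$ because $H^{r}(B_1^{\ast})=0$ for $0<r<k$. Next I would replace $B_2^{\ast}$ by a fibrant model over $C^{\ast}$: factor $\beta_2$ as $B_2^{\ast}\xrightarrow{\,j\,}\widetilde{B}_2^{\ast}\xrightarrow{\,q\,}C^{\ast}$ with $j$ an acyclic cofibration and $q$ surjective (standard, cf.\ \cite{fht}), and extend $\varepsilon_2$ along the acyclic cofibration $j$ to an augmentation $\widetilde{\varepsilon}_2$ of $\widetilde{B}_2^{\ast}$. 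Then $(q,\widetilde{\varepsilon}_2)$ is again an augmented $k$-cotruncation, now with $q$ surjective, and $j$ is a quasi-isomorphism over $C^{\ast}\oplus\mathbb{Q}$. After these reductions it suffices to build a quasi-isomorphism $g\colon\Lambda W_1\to\widetilde{B}_2^{\ast}$ over $C^{\ast}\oplus\mathbb{Q}$, for then $B_1^{\ast}\xleftarrow{\,m_1\,}\Lambda W_1\xrightarrow{\,g\,}\widetilde{B}_2^{\ast}\xleftarrow{\,j\,}B_2^{\ast}$, preceded by an identity morphism to match the shape of \Cref{Weak equivalences}, is the desired weak equivalence over $C^{\ast}\oplus\mathbb{Q}$.

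To construct $g$, I would form the pullback $P^{\ast}=\Lambda W_1\times_{C^{\ast}}\widetilde{B}_2^{\ast}$ of $\beta_1m_1$ and $q$. Since $q$ is surjective, the projection $\pi\colon P^{\ast}\to\Lambda W_1$ is a surjective morphism of commutative cochain algebras whose kernel, as a complex, is isomorphic to $\ker q$, and a morphism $g\colon\Lambda W_1\to\widetilde{B}_2^{\ast}$ with $qg=\beta_1m_1$ is exactly a section of $\pi$. From the short exact sequence $0\to\ker q\to\widetilde{B}_2^{\ast}\xrightarrow{q}C^{\ast}\to0$ and its long exact cohomology sequence, together with the cotruncation properties of $q$ (namely $H^{0}(\widetilde{B}_2^{\ast})=\mathbb{Q}$, $H^{r}(\widetilde{B}_2^{\ast})=0$ for $0<r<k$, and $H^{r}(q)$ an isomorphism for $r\geq k$), one reads off that $H^{r}(\ker q)=0$ for $r\leq1$ and for $r\geq k+1$. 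I would then build a section of $\pi$ inductively along the Sullivan filtration of $\Lambda W_1$ by the usual obstruction theory for Sullivan algebras: once the section is defined below a given stage, the obstruction to extending it over a generator $w$ of degree $m$ is a class in $H^{m+1}(\ker\pi)\cong H^{m+1}(\ker q)$, and since $m\geq k$ this group vanishes. Hence all obstructions are zero, a section $s$ exists, and setting $g=\mathrm{pr}_{\widetilde{B}_2^{\ast}}\circ s$ yields a morphism with $qg=\beta_1m_1$.

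Finally I would verify that $g$ is a quasi-isomorphism over $C^{\ast}\oplus\mathbb{Q}$. Because every generator of $\Lambda W_1$ has positive degree while $\widetilde{\varepsilon}_2$ vanishes in positive degrees, $\widetilde{\varepsilon}_2g$ annihilates all generators and fixes $1$, hence equals the unique augmentation $\varepsilon_1m_1$ of $\Lambda W_1$, so $g$ respects both structure maps. That $H^{\ast}(g)$ is an isomorphism follows from $qg=\beta_1m_1$: in degrees $r\geq k$ both $H^{r}(q)$ and $H^{r}(\beta_1m_1)$ are isomorphisms, hence so is $H^{r}(g)$; in degrees $0<r<k$ source and target have vanishing cohomology; and $H^{0}(g)$ is an algebra isomorphism $\mathbb{Q}\to\mathbb{Q}$. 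I expect the main obstacle to be the vanishing of the lifting obstructions in the second paragraph, which relies on isolating and combining two complementary facts: a $k$-cotruncation is an isomorphism on $H^{\geq k}$, so $\ker q$ is cohomologically confined to degrees $\leq k$; and, thanks to minimality together with $H^{r}(B_1^{\ast})=0$ for $0<r<k$, the model $\Lambda W_1$ has no generators below degree $k$. (When $k=1$ a $k$-cotruncation is simply a quasi-isomorphism, $\ker q$ is acyclic, and the same argument goes through.)
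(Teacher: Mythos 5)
Your argument is correct but takes a genuinely different route from the paper's. You replace $B_1^{\ast}$ by a Sullivan model $\Lambda W_1$ with all generators in degree $\geq k$ and $B_2^{\ast}$ by a model $\widetilde{B}_2^{\ast}$ with surjective structure map $q$, and then produce a single connecting quasi-isomorphism $g\colon\Lambda W_1\to\widetilde{B}_2^{\ast}$ by an obstruction-theoretic lifting argument; the key computation is that the long exact cohomology sequence of $0\to\ker q\to\widetilde{B}_2^{\ast}\to C^{\ast}\to 0$ together with the defining properties of a $k$-cotruncation forces $H^{r}(\ker q)=0$ for $r\geq k+1$, so that all obstruction classes (living in $H^{m+1}(\ker q)$ with $m\geq k$) vanish. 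The paper instead proceeds in two explicit steps for $k>1$: it first shows that any augmented $k$-cotruncation is weakly equivalent to an augmented \emph{standard} $k$-cotruncation $\vartheta_{\geq k}^{D}$ --- by choosing the complement $D\subset C^{k}$ of $\operatorname{im}(d)$ so as to contain the image under $\beta\circ\alpha$ of the degree-$k$ part of a minimal Sullivan model --- and then that any two standard cotruncations $\vartheta_{\geq k}^{D}$, $\vartheta_{\geq k}^{D'}$ are linked via the slightly larger intermediate complex $\widehat{\sigma}_{\geq k}^{E}C^{\ast}$; the case $k=1$ is handled separately, since there every $1$-cotruncation is already a quasi-isomorphism. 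The paper's route is more elementary and hands-on (only minimal Sullivan models and direct sum complements), and it isolates standard cotruncations as canonical representatives, which is used again elsewhere. Your route is more uniform and conceptually cleaner: a single obstruction computation handles all $k$ simultaneously and makes transparent that the result is driven by exactly two complementary vanishings, namely $H^{\geq k+1}(\ker q)=0$ and the absence of Sullivan generators below degree $k$; the price is the appeal to a surjective factorization of $\beta_2$ and the extension of the augmentation $\varepsilon_2$ along the acyclic cofibration $j$, which uses somewhat more machinery than the paper invokes.
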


\begin{proof}
For $k=1$, we fix an augmentation $\varepsilon_{C} \colon C^{\ast} \rightarrow \mathbb{Q}$, and show that any given augmented $1$-cotruncation $(\beta, \varepsilon_{B}) \colon B^{\ast} \rightarrow C^{\ast} \oplus \mathbb{Q}$ is weakly equivalent over $C^{\ast} \oplus \mathbb{Q}$ to the morphism $(\operatorname{id}_{C^{\ast}}, \varepsilon_{C}) \colon C^{\ast} \rightarrow C^{\ast} \oplus \mathbb{Q}$.
(Here, $(\operatorname{id}_{C^{\ast}}, \varepsilon_{C})$ is considered as a morphisms over $C^{\ast} \oplus \mathbb{Q}$ by means of the identity morphism on $C^{\ast} \oplus \mathbb{Q}$.)
For this purpose, we choose a quasi-isomorphism $\alpha \colon A^{\ast} \xrightarrow{\simeq} B^{\ast}$ from a commutative cochain algebra $A^{\ast}$ with $A^{0} = \mathbb{Q}$ and unique augmentation $\varepsilon_{A} \colon A^{\ast} \rightarrow \mathbb{Q}$.
(For example, we can use the existence of Sullivan models by Proposition 12.1 in \cite{fht}.)
Then,
\begin{center}
\begin{tikzcd}
%\widehat{\tau}_{\geq 1}C^{\ast} \ar{r}{\vartheta_{\geq 1}} \ar{d}[swap]{(\vartheta_{\geq 1}, \varepsilon)} &
B^{\ast} \ar[swap]{d}{(\beta, \varepsilon_{B})} & A^{\ast} \ar{l}{\simeq}[swap]{\alpha} \ar{d}{(\beta \circ \alpha, \varepsilon_{A})} \ar{r}{\beta \circ \alpha} & C^{\ast} \ar{d}{(\operatorname{id}_{C^{\ast}}, \varepsilon_{C})} \\
%C^{\ast} \oplus \mathbb{Q} \ar{r}{=} &
C^{\ast} \oplus \mathbb{Q} & C^{\ast} \oplus \mathbb{Q} \ar{l}[swap]{=} \ar{r}{=} & C^{\ast} \oplus \mathbb{Q}
\end{tikzcd}
\end{center}
commutes.
This yields the desired weak equivalence over $C^{\ast} \oplus \mathbb{Q}$ between $(\beta, \varepsilon_{B})$ and $(\operatorname{id}_{C^{\ast}}, \varepsilon_{C})$ because $\beta \circ \alpha$ is a $1$-cotruncation and thus a quasi-isomorphism.

For $k > 1$, it suffices to show that
(1) every augmented $k$-cotruncation of $C^{\ast}$ is weakly equivalent over $C^{\ast} \oplus \mathbb{Q}$ to an augmented standard $k$-cotruncation of $C^{\ast}$, and that
(2) any two augmented standard $k$-cotruncations of $C^{\ast}$ are weakly equivalent over $C^{\ast} \oplus \mathbb{Q}$.

\textit{Proof of (1).}
Given any augmented $k$-cotruncation $(\beta, \varepsilon_{B}) \colon B^{\ast} \rightarrow C^{\ast} \oplus \mathbb{Q}$, we choose a quasi-isomorphism $\alpha \colon A^{\ast} \xrightarrow{\simeq} B^{\ast}$ from a commutative cochain algebra $A^{\ast}$ with $A^{0} = \mathbb{Q}$ and unique augmentation $\varepsilon_{A} \colon A^{\ast} \rightarrow \mathbb{Q}$, and satisfying $A^{i} = 0$ for $1 \leq i \leq k-1$, and $d A^{k} = 0$.
(For example, we can take $A^{\ast}$ to be a minimal Sullivan model for $B^{\ast}$ as constructed in Proposition 12.2 in \cite{fht}.)
Since $d A^{k} = 0$ and $A^{k-1} = 0$, we obtain an isomorphism
\begin{center}
\begin{tikzcd}
A^{k} = H^{k}(A^{\ast}) \ar{r}{H^{k}(\alpha)}[swap]{\cong} & H^{k}(B^{\ast}) \ar{r}{H^{k}(\beta)}[swap]{\cong} & H^{k}(C^{\ast}).
%\xrightarrow{H^{k}(\alpha) \; \cong} H^{k}(B^{\ast}) \xrightarrow{H^{k}(\beta) \; \cong} H^{k}(C^{\ast}).
\end{tikzcd}
\end{center}
%
%$$
%A^{k} = H^{k}(A^{\ast}) \xrightarrow{H^{k}(\alpha) \; \cong} H^{k}(B^{\ast}) \xrightarrow{H^{k}(\beta) \; \cong} H^{k}(C^{\ast}).
%$$
Thus, we have $\beta(\alpha(A^{k})) \cap \operatorname{im}(d \colon C^{k-1} \rightarrow C^{k}) = 0$.
Therefore, we may choose a direct sum complement $D \subset C^{k}$ of $\operatorname{im}(d \colon C^{k-1} \rightarrow C^{k})$ in $C^{k}$ such that $\beta(\alpha(A^{k})) \subset D$.
Then, by construction, the $k$-cotruncation $\beta \circ \alpha \colon A^{\ast} \rightarrow C^{\ast}$ lifts to a quasi-isomorphism $\eta \colon A^{\ast} \rightarrow \widehat{\tau}_{\geq k}^{D}C^{\ast}$ under the standard $k$-cotruncation $\vartheta_{\geq k}^{D} \colon \widehat{\tau}_{\geq k}^{D}C^{\ast} \rightarrow C^{\ast}$ of $C^{\ast}$ (see \Cref{example standard cotruncation}).
Consequently, we obtain a commutative diagram
\begin{center}
\begin{tikzcd}
B^{\ast}  \ar{d}[swap]{(\beta, \varepsilon_{B})} & A^{\ast} \ar{l}{\simeq}[swap]{\alpha} \ar{d}{(\beta \circ \alpha, \varepsilon_{A})} \ar{r}{\eta}[swap]{\simeq} & \widehat{\tau}_{\geq k}^{D}C^{\ast} \ar{d}{(\vartheta_{\geq k}^{D}, \varepsilon_{D})} \\
C^{\ast} \oplus \mathbb{Q} & C^{\ast} \oplus \mathbb{Q} \ar{l}[swap]{=} \ar{r}{=} & C^{\ast} \oplus \mathbb{Q}
\end{tikzcd}
\end{center}
which provides a weak equivalence over $C^{\ast} \oplus \mathbb{Q}$ from $(\beta, \varepsilon_{B})$ to $(\vartheta_{\geq k}^{D}, \varepsilon_{D})$.
This completes the proof of assertion (1).

\textit{Proof of (2).}
We fix a subspace $E \subset C^{k-1}$ such that the differential $d \colon C^{k-1} \rightarrow C^{k}$ restricts to an isomorphism $E \xrightarrow {\cong} d(C^{k-1})$.
We consider the cochain complex
$$
\sigma_{\geq k}^{E}C^{\ast} \colon \dots \xrightarrow{} 0 \rightarrow 0 \rightarrow E \xrightarrow{d|} C^{k} \xrightarrow{d} C^{k+1} \xrightarrow{d} \dots,
$$
where $E$ is placed in degree $k-1$.
Since $k > 1$, it follows that $\widehat{\sigma}_{\geq k}^{E}C^{\ast} = \sigma_{\geq k}^{E}C^{\ast} \oplus \mathbb{Q}$ is a commutative cochain algebra whose multiplication is inherited from $C^{\ast}$, and which has a unique augmentation $\varepsilon_{E} \colon \widehat{\sigma}_{\geq k}^{E}C^{\ast} \rightarrow \mathbb{Q}$.
Moreover, the canonical inclusion $\varphi_{\geq k}^{E} \colon \widehat{\sigma}_{\geq k}^{E}C^{\ast} \rightarrow C^{\ast}$ defines a $k$-cotruncation of $C^{\ast}$.

%, and that the canonical inclusion $\lambda_{\geq k}^{D} \colon \widehat{\sigma}_{\geq k}^{E}C^{\ast} \rightarrow C^{\ast}$ defines a $k$-cotruncation of $C^{\ast}$.
Given any standard $k$-cotruncation $\vartheta_{\geq k}^{D} \colon \widehat{\tau}_{\geq k}^{D}C^{\ast} \rightarrow C^{\ast}$ of $C^{\ast}$, we observe that the canonical inclusion $\rho_{\geq k} \colon \widehat{\tau}_{\geq k}^{D}C^{\ast} \rightarrow \widehat{\sigma}_{\geq k}^{E}C^{\ast}$ is by construction a quasi-isomorphism.
Consequently, we obtain a commutative diagram
\begin{center}
\begin{tikzcd}
\widehat{\tau}_{\geq k}^{D}C^{\ast} \ar{d}[swap]{(\vartheta_{\geq k}^{D}, \varepsilon_{D})} \ar{r}{\rho_{\geq k}}[swap]{\simeq} & \widehat{\sigma}_{\geq k}^{E}C^{\ast} \ar{d}{(\varphi_{\geq k}^{E}, \varepsilon_{E})}  \\
C^{\ast} \oplus \mathbb{Q} \ar{r}{=} & C^{\ast} \oplus \mathbb{Q}
\end{tikzcd}
\end{center}
which provides a quasi-isomorphism over $C^{\ast} \oplus \mathbb{Q}$ from $(\vartheta_{\geq k}^{D}, \varepsilon_{D})$ to $(\varphi_{\geq k}^{E}, \varepsilon_{E})$.
This completes the proof of assertion (2).
\end{proof}

In this paper, \Cref{proposition weak equivalence of cotruncations} will always be applied in connection with the following lemma, where it provides the weak equivalence between $g'$ and $\gamma \circ g$ over $C'$ (see the proofs of \Cref{main result B} and \Cref{de rham theorem}).
%we will only use the following corollary of \Cref{proposition weak equivalence of cotruncations} 

\begin{lemma}\label{corollary weak equivalence of fiber products}
We consider two fiber product squares
\begin{center}
\begin{tikzcd}
A \times_{C} B \ar[r] \ar[d, "\pi"] & B \ar[d, "g"] \\
%, hook, "(\vartheta_{\geq k}, \varepsilon)"] \\
A  \ar[r, twoheadrightarrow, "f"] & C,
\end{tikzcd}
\qquad\qquad
\begin{tikzcd}
A' \times_{C'} B' \ar[r] \ar[d, "\pi'"] & B' \ar[d, "g'"] \\
%, hook, "(\vartheta_{\geq k}, \varepsilon)"] \\
A'  \ar[r, twoheadrightarrow, "f'"] & C',
\end{tikzcd}
\end{center}
of morphisms over $\mathbb{Q}$, where the morphisms $f$ and $f'$ are surjective.
%
%where all morphisms are over , and $f$ is surjective.
%%commutative cochain algebras of the form
%\begin{equation}\label{corollary cotruncations}
%\begin{tikzcd}
%A \oplus \mathbb{Q} \times_{C \oplus \mathbb{Q}} B \ar[r, twoheadrightarrow] \ar[d] & B \ar[d, "\theta"] \\
%%, hook, "(\vartheta_{\geq k}, \varepsilon)"] \\
%A \oplus \mathbb{Q} \ar[r, twoheadrightarrow, "f \oplus \operatorname{id}_{\mathbb{Q}}"] & C \oplus \mathbb{Q}.
%\end{tikzcd}
%\end{equation}
Suppose that $(\alpha, \gamma) \colon f \xrightarrow{\simeq} f'$ is a quasi-isomorphism over $\mathbb{Q}$ such that $g'$ and the composition $\gamma \circ g$ (seen as morphisms over $C'$ by means of the identity morphism on $C'$) are weakly equivalent over $C'$.
Then, $\pi$ and $\pi'$ are weakly equivalent over $\mathbb{Q}$.
If, in addition, $A = A'$ and the quasi-isomorphism $\alpha$ is the identity morphism on $A$, then $\pi$ and $\pi'$ (seen as morphisms over $A$ by means of the identity morphism on $A$) are weakly equivalent over $A$.
\end{lemma}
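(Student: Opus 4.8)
The plan is to reduce the lemma to the standard fact that pulling back commutative cochain algebras along a \emph{surjective} morphism preserves quasi-isomorphisms, and then to transport the two hypothesised chains of quasi-isomorphisms through this pullback operation. Throughout I would use that ``weakly equivalent over $A$'' is an equivalence relation (chains of quasi-isomorphisms over $A$ are closed under concatenation and under reversal), so that it suffices to exhibit \emph{some} finite zigzag of quasi-isomorphisms over the relevant algebra.

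\emph{Auxiliary statement.} First I would prove: given a commutative diagram of morphisms over $\mathbb{Q}$ with rows $A\xrightarrow{f}C\xleftarrow{g}B$ and $A'\xrightarrow{f'}C'\xleftarrow{g'}B'$ and vertical quasi-isomorphisms $a\colon A\to A'$, $c\colon C\to C'$, $b\colon B\to B'$, where $f$ and $f'$ are surjective, the induced morphism $A\times_C B\to A'\times_{C'}B'$ is a quasi-isomorphism; moreover it is compatible with the projections onto $A$, $A'$ via $a$, it respects the augmentations inherited from $A$, $B$ and $A'$, $B'$, and it is a morphism over $A$ when $a=\mathrm{id}_A$. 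This follows by applying the five lemma to the induced map of short exact sequences of cochain complexes
\begin{center}
\begin{tikzcd}
0 \ar[r] & A\times_C B \ar[r] \ar[d] & A\oplus B \ar[r] \ar[d] & C \ar[r] \ar[d,"c"] & 0 \\
0 \ar[r] & A'\times_{C'}B' \ar[r] & A'\oplus B' \ar[r] & C' \ar[r] & 0,
\end{tikzcd}
\end{center}
in which $A\oplus B\to C$ is $(x,y)\mapsto f(x)-g(y)$, surjective because $f$ is (and similarly for the bottom row), and $A\oplus B\to A'\oplus B'$ is a quasi-isomorphism because $a$ and $b$ are. The assertions about augmentations and projections are immediate diagram chases.

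\emph{Two applications.} I would then apply the auxiliary statement twice. For the first application, take the diagram with verticals $\alpha\colon A\to A'$, $\gamma\colon C\to C'$, $\mathrm{id}_B$ and legs $f$, $g$ on top, $f'$, $\gamma\circ g$ on the bottom; commutativity is exactly the relation $f'\circ\alpha=\gamma\circ f$, and $f$, $f'$ are surjective. The output is a quasi-isomorphism $(\,\text{induced map},\alpha\,)$ over $\mathbb{Q}$ from $\pi\colon A\times_C B\to A$ to the projection $\pi''\colon A'\times_{C'}B\to A'$ of the pullback of $f'$ along $\gamma\circ g$. For the second application, apply the functor $A'\times_{C'}(-)$ to the given chain of quasi-isomorphisms over $C'$ connecting $\gamma\circ g\colon B\to C'$ and $g'\colon B'\to C'$: each quasi-isomorphism over $C'$ in that chain is fed into the auxiliary statement with the outer verticals $\mathrm{id}_{A'}$, $\mathrm{id}_{C'}$ and the surjection $f'$, and yields a quasi-isomorphism over $A'$ of the corresponding projections, with the direction of the arrow preserved. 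This produces a chain of quasi-isomorphisms over $A'$ connecting $\pi''$ and $\pi'\colon A'\times_{C'}B'\to A'$.

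\emph{Conclusion and main obstacle.} Concatenating the quasi-isomorphism over $\mathbb{Q}$ from $\pi$ to $\pi''$ with the chain over $A'$ (a fortiori over $\mathbb{Q}$) from $\pi''$ to $\pi'$, one gets a finite zigzag of quasi-isomorphisms over $\mathbb{Q}$ between $\pi$ and $\pi'$, i.e.\ $\pi$ and $\pi'$ are weakly equivalent over $\mathbb{Q}$. If in addition $A=A'$ and $\alpha=\mathrm{id}_A$, the last clause of the auxiliary statement makes the first quasi-isomorphism one over $A$, while the chain of the second application is already over $A'=A$, so the whole zigzag is over $A$. The five-lemma argument behind the auxiliary statement is routine; the step I expect to demand the most care is purely organizational, namely checking at each use of the auxiliary statement that the induced map on fiber products is a morphism over the intended algebra ($\mathbb{Q}$, $A'$ or $A$) and respects the inherited augmentations, so that the concatenated zigzag genuinely qualifies as a weak equivalence in the sense of \Cref{Weak equivalences}.
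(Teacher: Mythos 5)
Your proposal is correct and takes essentially the same route as the paper: your ``auxiliary statement'' is precisely Lemma 13.3 in F\'{e}lix--Halperin--Thomas, which the paper cites rather than reproving (via the same five-lemma argument you sketch), and both proofs then apply that lemma once to pass from $\pi$ to the intermediate projection $A' \times_{C'} B \to A'$ and then iteratively along the given zigzag over $C'$ to reach $\pi'$.
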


\begin{proof}
We apply Lemma 13.3 in \cite{fht} to the commutative diagram over $\mathbb{Q}$
\begin{center}
\begin{tikzcd}
A \ar{d}{\simeq}[swap]{\alpha} \ar[r, twoheadrightarrow, "f"] & C \ar{d}{\simeq}[swap]{\gamma} & B \ar{l}[swap]{g} \ar{d}{=} \\
A' \ar[r, twoheadrightarrow, "f'"] & C' & B \ar{l}[swap]{\gamma \circ g}
\end{tikzcd}
\end{center}
%(in which the morphism $B \rightarrow C' \oplus \mathbb{Q}$ is defined by commutativity of the right hand square)
to obtain a quasi-isomorphism over $\mathbb{Q}$ from $\pi$ to the structure map $A' \times_{C'} B \rightarrow A'$.
Since $g'$ and the composition $\gamma \circ g$ are by assumption weakly equivalent over $C'$, we can construct a weak equivalence over $\mathbb{Q}$ between $A' \times_{C'} B \rightarrow A'$ and $\pi'$ by applying Lemma 13.3 in \cite{fht} iteratively to commutative diagrams over $\mathbb{Q}$ of the form
\begin{center}
\begin{tikzcd}
A' \ar{d}{}[swap]{=} \ar[r, twoheadrightarrow, "f'"] & C' \ar{d}{}[swap]{=} & B_{0} \ar{l}[swap]{g} \ar{d}{\simeq} \\
A' \ar[r, twoheadrightarrow, "f'"] & C' & B_{1} \ar{l}[swap]{\gamma \circ g}.
\end{tikzcd}
\end{center}
%to obtain a chain of quasi-isomorphisms over $$ that connects $$ with $$.
%of the form $A' \times_{C'} B_{0} \xrightarrow{\simeq} A' \times_{C'} B_{1}$ over $\mathbb{Q}$.
%
%\begin{center}
%\begin{tikzcd}
%A' \oplus \mathbb{Q} \ar{d}[swap]{=} \ar{r}{} & C' \oplus \mathbb{Q} \ar{d}{=} & B_{0} \ar{l} \ar{d}{\simeq} \\
%A' \oplus \mathbb{Q} \ar{r}{} & C' \oplus \mathbb{Q} & B_{1}. \ar{l}
%\end{tikzcd}
%\end{center}
%
%
%$A \oplus \mathbb{Q} \times_{C \oplus \mathbb{Q}} B \rightarrow A \oplus \mathbb{Q}$ to $A' \oplus \mathbb{Q} \times_{C' \oplus \mathbb{Q}} B \rightarrow A' \oplus \mathbb{Q}$.
%Since $B \rightarrow C' \oplus \mathbb{Q}$ and $B' \rightarrow C' \oplus \mathbb{Q}$ are both augmented $k$-cotruncations of $C'$, we conclude from \Cref{proposition weak equivalence of cotruncations} that they are weakly equivalent over $C' \oplus \mathbb{Q}$.
%Thus, we can construct a weak equivalence over $A' \oplus \mathbb{Q}$ from $A' \oplus \mathbb{Q} \times_{C' \oplus \mathbb{Q}} B \rightarrow A' \oplus \mathbb{Q}$ to $A' \oplus \mathbb{Q} \times_{C' \oplus \mathbb{Q}} B' \rightarrow A' \oplus \mathbb{Q}$ by applying Lemma 13.3 in \cite{fht} iteratively to commutative diagrams of the form
%\begin{center}
%\begin{tikzcd}
%A' \oplus \mathbb{Q} \ar{d}[swap]{=} \ar{r}{} & C' \oplus \mathbb{Q} \ar{d}{=} & B_{0} \ar{l} \ar{d}{\simeq} \\
%A' \oplus \mathbb{Q} \ar{r}{} & C' \oplus \mathbb{Q} & B_{1}. \ar{l}
%\end{tikzcd}
%\end{center}

If $A = A'$ and the quasi-isomorphism $\alpha$ is the identity morphism on $A$, then all quasi-isomorphisms we constructed are quasi-isomorphisms over $A$.
% by means of projection to the first component of fiber products.
\end{proof}

%
%\newpage
%The following lemma shows that standard cotruncations depend not very strongly on the choice of a direct sum complement $D$.
%When the dependence on $D$ plays no role, we will frequently suppress it in the notation.

%The following proposition shows that for $k > 1$ any $k$-cotruncation of a commutative cochain algebra $C^{\ast}$ is weakly equivalent over $C^{\ast}$ to a $k$-cotruncation $\tau_{\geq k}C^{\ast} \rightarrow C^{\ast}$.

%For any morphism $B^{\ast} \rightarrow C^{\ast} \oplus \mathbb{Q}$ of commutative cochain algebras, the property of being an augmented $k$-truncation of $C^{\ast}$ is preserved under weak equivalence over $C^{\ast} \oplus \mathbb{Q}$.

%If $\beta = \vartheta_{\geq k}^{D} \colon \widehat{\tau}_{\geq k}^{D}C^{\ast} \rightarrow C^{\ast}$ is a standard $k$-cotruncation and $\varepsilon_{B} = \varepsilon \colon \widehat{\tau}_{\geq k}^{D}C^{\ast} \rightarrow \mathbb{Q}$ is the unique augmentation (see \Cref{example standard cotruncation}), then we call the morphism $(\beta, \varepsilon_{B})$ an \emph{augmented standard $k$-cotruncation} of $C^{\ast}$.

%\newpage
\subsection{Poincar\'{e} duality}\label{cotruncation and poincare duality}
%Throughout this section, let $C^{\ast}$ be a commutative cochain algebra.
For every integer $k > 0$, we fix a standard cotruncation $\vartheta_{\geq k} = \vartheta_{\geq k}^{D} \colon \widehat{\tau}_{\geq k}C^{\ast} = \widehat{\tau}_{\geq k}^{D}C^{\ast} \rightarrow C^{\ast}$ of $C^{\ast}$ (see \Cref{example standard cotruncation}).

\begin{lemma}\label{lemma vanishing products}
If $k, l, r, s > 0$ are integers such that $k+l > r+s$, then
$$
\vartheta_{\geq k}(\tau_{\geq k}C^{r}) \cdot \vartheta_{\geq l}(\tau_{\geq l}C^{s}) = 0.
$$
\end{lemma}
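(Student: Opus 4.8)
The plan is to argue directly from the explicit description of the standard cotruncation cochain complex recalled in \Cref{example standard cotruncation}. By construction one has $\widehat{\tau}_{\geq k}^{D}C^{\ast} = \tau_{\geq k}^{D}C^{\ast} \oplus \mathbb{Q}$, where the cochain complex $\tau_{\geq k}^{D}C^{\ast}$ carries the chosen complement $D$ in degree $k$, agrees with $C^{r}$ in every degree $r > k$, and is trivial in every degree $r < k$; moreover $\vartheta_{\geq k}$ is on each degree the inclusion into $C^{r}$. Since the integers $r$ and $s$ in the statement are positive, the degree-$0$ summand $\mathbb{Q}$ is irrelevant, and for $r > 0$ we therefore have
\begin{align*}
\vartheta_{\geq k}\bigl(\tau_{\geq k}C^{r}\bigr) = \begin{cases} 0 & \text{if } r < k, \\ D & \text{if } r = k, \\ C^{r} & \text{if } r > k, \end{cases}
\end{align*}
viewed as a subspace of $C^{r}$. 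The product in the statement is then computed via the multiplication of $C^{\ast}$ and lies in $C^{r+s}$.

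The core of the argument is an elementary bookkeeping of degrees. First I would observe that $r \geq k$ and $s \geq l$ cannot both hold, since they would give $r + s \geq k + l$, contradicting the hypothesis $k + l > r + s$. Hence at least one of the inequalities $r < k$ or $s < l$ is satisfied; by the symmetry of the hypothesis under interchanging $(k, r)$ with $(l, s)$ we may assume $r < k$. Then the table above gives $\vartheta_{\geq k}\bigl(\tau_{\geq k}C^{r}\bigr) = 0$, so the product $\vartheta_{\geq k}(\tau_{\geq k}C^{r}) \cdot \vartheta_{\geq l}(\tau_{\geq l}C^{s})$ vanishes, as claimed.

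I do not expect any real obstacle: the statement is purely a support/degree estimate for the cotruncation complexes, and once \Cref{example standard cotruncation} is invoked the proof is immediate. The only points requiring a little care are recording correctly which of $0$, $D$, $C^{r}$ sits in degree $r$ of $\tau_{\geq k}^{D}C^{\ast}$, and noting that the positivity hypotheses $r, s > 0$ allow one to disregard the unit summand $\mathbb{Q}$ of $\widehat{\tau}_{\geq k}^{D}C^{\ast}$.
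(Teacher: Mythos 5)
Your proof is correct and uses essentially the same approach as the paper: a degree estimate showing that under the hypothesis $k+l > r+s$ at least one of the two factors is forced to be zero. The paper phrases the case split slightly differently (if $r \geq k$ then $s < l$, so $\vartheta_{\geq l}(\beta) = 0$; otherwise $\vartheta_{\geq k}(\alpha) = 0$), whereas you first observe that $r \geq k$ and $s \geq l$ cannot both hold and then invoke symmetry, but the underlying argument is identical.
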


\begin{proof}
Let $\alpha \in \tau_{\geq k}C^{r}$ and $\beta \in \tau_{\geq l}C^{s}$.
By construction of the $k$-cotruncation cochain complex in \Cref{example standard cotruncation}, we have $\vartheta_{\geq k}(\alpha) = 0$ unless $\alpha$ has degree $r \geq k$.
But in the latter case, our assumption $k+l > r+s$ implies that $s < l$, and thus $\vartheta_{\geq l}(\beta) = 0$.
Hence, in all cases we obtain $\vartheta_{\geq k}(\alpha) \cdot \vartheta_{\geq l}(\beta) = 0$.
\end{proof}

For the rest of this section, we assume that $H^{\ast}(C^{\ast})$ is a Poincar\'{e} duality algebra of dimension $c \geq 0$ (see e.g. Definition 3.1 in \cite{fot}).
That is, the vector spaces $H^{r}(C^{\ast})$ are finite dimensional for all $r$, and there is an isomorphism $H^{c}(C^{\ast}) \cong \mathbb{Q}$ such that multiplication in $H^{\ast}(C^{\ast})$ induces for all $r$ a nondegenerate bilinear pairing
\begin{align}\label{poincare duality algebra pairing}
\langle \quad, \quad\rangle \colon H^{r}(C^{\ast}) \times H^{c-r}(C^{\ast}) \rightarrow H^{c}(C^{\ast}) \cong \mathbb{Q}.
\end{align}

\begin{lemma}\label{lemma truncated poincare duality}
If $k, l > 0$ are integers such that $k+l > c$, then the pairing (\ref{poincare duality algebra pairing}) restricts to a nondegenerate pairing
\begin{align}\label{truncated poincare duality algebra pairing}
H^{r}(\tau_{< k}C^{\ast}) \times H^{c-r}(\tau_{\geq l}C^{\ast}) \rightarrow \mathbb{Q}, \quad (x, y) \mapsto \langle H(\vartheta_{<k})(x), H(\vartheta_{\geq l})(y)\rangle.
\end{align}
\end{lemma}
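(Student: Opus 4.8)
The plan is to deduce the nondegeneracy of the truncated pairing \eqref{truncated poincare duality algebra pairing} directly from the nondegeneracy of the full Poincar\'{e} duality pairing \eqref{poincare duality algebra pairing}, by showing that the two cohomology groups appearing as factors are, under the maps $H(\vartheta_{<k})$ and $H(\vartheta_{\geq l})$, identified with complementary subspaces of $H^{\ast}(C^{\ast})$ that pair perfectly. First I would recall the basic (co)homological facts about the (co)truncation complexes already assembled in this section: the inclusion $\vartheta_{<k}\colon \tau_{<k}C^{\ast}\hookrightarrow C^{\ast}$ induces an isomorphism $H^{r}(\tau_{<k}C^{\ast})\xrightarrow{\cong}H^{r}(C^{\ast})$ for $r<k$ and $H^{r}(\tau_{<k}C^{\ast})=0$ for $r\geq k$; dually, $\vartheta_{\geq l}^{D}$ induces an isomorphism $H^{s}(\widehat{\tau}_{\geq l}^{D}C^{\ast})\xrightarrow{\cong}H^{s}(C^{\ast})$ for $s\geq l$ and $H^{s}=0$ for $0<s<l$ (and one should note that on the nonunital complex $\tau_{\geq l}C^{\ast}$ one has the same cohomology in positive degrees). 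Thus the stated pairing is only potentially nonzero when $r<k$ and $c-r\geq l$, i.e. when $c-l<r<k$; outside that range both sides are forced to be trivial and nondegeneracy is vacuous, so it suffices to treat $c-l<r<k$.

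In that range, both $H(\vartheta_{<k})\colon H^{r}(\tau_{<k}C^{\ast})\to H^{r}(C^{\ast})$ and $H(\vartheta_{\geq l})\colon H^{c-r}(\tau_{\geq l}C^{\ast})\to H^{c-r}(C^{\ast})$ are isomorphisms (using $r<k$ for the first and $c-r\geq l$, together with $c-r>0$ since $r<k\leq c$ — here one uses $k\le c$, which follows from $H^{k}(\tau_{<k}C^\ast)=0$ being compatible with $k+l>c$ and $l\le c$, or simply restricts attention to the relevant $k$; more robustly, if $r = c$ then $c-r=0$ and one invokes $H^0(C^\ast)=\mathbb Q$ directly). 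Hence the pairing \eqref{truncated poincare duality algebra pairing} is the composite of the pair of isomorphisms $H(\vartheta_{<k})\times H(\vartheta_{\geq l})$ with the restriction of \eqref{poincare duality algebra pairing} to $H^{r}(C^{\ast})\times H^{c-r}(C^{\ast})$ — but that restriction is the \emph{full} Poincar\'{e} duality pairing in those degrees, which is nondegenerate by hypothesis. Since precomposition by isomorphisms in both variables preserves nondegeneracy, we conclude. The degenerate ranges $r\geq k$ or $c-r<l$ are handled by the vanishing statements above, which is precisely where the hypothesis $k+l>c$ enters: it guarantees $\{r: r<k\}\cup\{r: c-r<l\}$ covers all $r$, so no ``off-diagonal'' degree can carry a nontrivial pairing that would spoil nondegeneracy, and every degree in which both factors are nonzero lies in the good range $c-l<r<k$.

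The only genuinely delicate point — and the one I would write out carefully rather than wave at — is the bookkeeping identifying $H^{c-r}(\tau_{\geq l}C^{\ast})$ with $H^{c-r}(\widehat{\tau}_{\geq l}^{D}C^{\ast})$ and checking that $H(\vartheta_{\geq l})$ really is an isomorphism onto $H^{c-r}(C^{\ast})$ for all $c-r\geq l$, including the boundary degree $c-r=l$ where the complex $\tau_{\geq l}^{D}C^{\ast}$ begins with the chosen complement $D$ in place of all of $C^{l}$; this is exactly the content of \Cref{example standard cotruncation}, so it reduces to a direct inspection of the cochain complexes. Everything else is a one-line reduction to the hypothesis, so I do not expect any real obstacle beyond this indexing care. (One could alternatively phrase the whole argument as: the composite $H^{r}(\tau_{<k}C^{\ast})\xrightarrow{H(\vartheta_{<k})}H^{r}(C^{\ast})\xrightarrow{\cong}\operatorname{Hom}(H^{c-r}(C^{\ast}),\mathbb{Q})\xrightarrow{(H(\vartheta_{\geq l}))^{\ast}}\operatorname{Hom}(H^{c-r}(\tau_{\geq l}C^{\ast}),\mathbb{Q})$ is a composite of isomorphisms, and unwinding definitions this composite is adjoint to \eqref{truncated poincare duality algebra pairing}.)
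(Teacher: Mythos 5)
Your strategy --- case analysis by degree $r$, reducing to the full pairing (\ref{poincare duality algebra pairing}) via the cohomology isomorphisms induced by $\vartheta_{<k}$ and $\vartheta_{\geq l}$ --- is the same as the paper's. However, you have the key inequality reversed: ``$r < k$ and $c - r \geq l$'' is equivalent to $r \leq c - l$, not to $c - l < r < k$. The interval $c - l < r < k$ that you repeatedly call the ``good range'' is in fact the dangerous one: there $r < k$, so $H^{r}(\tau_{<k}C^{\ast}) \cong H^{r}(C^{\ast})$ may well be nonzero, yet $c - r < l$ forces $H^{c-r}(\tau_{\geq l}C^{\ast}) = 0$, so the pairing would be degenerate if that interval contained an integer $r$ with $H^{r}(C^{\ast}) \neq 0$. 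Your subsequent claim that ``outside that range both sides are forced to be trivial'' is also false for $0 \leq r \leq c - l$, where the $\tau_{<k}$ side equals $H^{r}(C^{\ast})$. As written, the argument does not go through.

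Fixing the direction recovers the paper's argument: for $r \geq k$, the hypothesis $k + l > c$ forces $c - r < l$, so both factors vanish; for $r \leq c - l$ (which is $< k$ since $c - l < k$), both $H(\vartheta_{<k})$ and $H(\vartheta_{\geq l})$ are isomorphisms, identifying the truncated pairing with (\ref{poincare duality algebra pairing}) in degree $r$, which is nondegenerate. For this dichotomy to exhaust all $r$ one needs the intermediate range $c - l < r < k$ to be empty, i.e.\ $k + l \leq c + 1$; together with $k + l > c$ this means $k + l = c + 1$, exactly the hypothesis of \Cref{proposition truncated poincare duality}, where the lemma is applied. Strictly, $k + l > c$ alone is not enough: taking $H^{\ast}(C^{\ast}) = H^{\ast}(\mathbb{CP}^{2})$, $c = 4$, $k = l = 3$, $r = 2$ gives $H^{2}(\tau_{<3}C^{\ast}) \cong \mathbb{Q}$ paired against $H^{2}(\tau_{\geq 3}C^{\ast}) = 0$. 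The paper's one-line ``for $r < k$, the pairing is isomorphic to (\ref{poincare duality algebra pairing})'' is equally terse on this point, but unambiguous in the intended setting $k + l = c + 1$; a correct write-up should make that assumption explicit.
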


\begin{proof}
If $r \geq k$, then it follows from $k+l > c$ that $c-r < l$, so both vector spaces of the pairing are zero.
For $r < k$, the pairing is isomorphic to the pairing (\ref{poincare duality algebra pairing}).
\end{proof}

Let $L$ be a closed $\mathbb{Q}$-oriented topological manifold of dimension $c$.
Then, $C^{\ast} = A_{PL}(L)$ is a $c$-dimensional Poincar\'{e} duality algebra by means of the isomorphism $H^{c}(C^{\ast}) \cong \mathbb{Q}$ induced by integration $\int_{\lambda} \colon A_{PL}^{c}(L) \rightarrow \mathbb{Q}$ over a chain representative $\lambda \in C_{c}(L; \mathbb{Q})$ of the fundamental class of $L$.
In fact, note that $\int_{\lambda}$ vanishes on $d(A_{PL}^{c-1}(L))$ by Stokes' theorem (\Cref{stokes theorem}), and that the induced pairing $\langle [\alpha], [\beta]\rangle = \int_{\lambda}\alpha \cdot \beta$ (\ref{poincare duality algebra pairing}) is nondegenerate by Poincar\'{e}-Lefschetz duality (\Cref{poincare lefschetz duality theorem}).
Such an integration on cochain level is the structure we need for general $C^{\ast}$ to express the pairing (\ref{truncated poincare duality algebra pairing}) entirely in terms of cotruncations.

\begin{proposition}\label{proposition truncated poincare duality}
Let $k, l > 0$ be integers such that $k+l=c+1$.
Suppose that the isomorphism $H^{c}(C^{\ast}) \cong \mathbb{Q}$ is induced by a linear form $\int \colon C^{c} \rightarrow \mathbb{Q}$ that vanishes on $d(C^{c-1})$.
Then, multiplication in $C^{\ast}$ followed by integration induces a nondegenerate bilinear form
\begin{align*}
\int \colon H^{r}(C^{\ast} / \vartheta_{\geq k}(\tau_{\geq k}C^{\ast})) \times H^{c-r}(\tau_{\geq l}C^{\ast}) \rightarrow \mathbb{Q}, \quad (\left[\pi_{\geq k}(\alpha)\right], \left[\beta\right]) \mapsto \int \alpha \cdot \vartheta_{\geq l}(\beta),
\end{align*}
where $\pi_{\geq k} \colon C^{\ast} \rightarrow C^{\ast}/ \vartheta_{\geq k}(\tau_{\geq k}C^{\ast})$ denotes the canonical projection morphism.
\end{proposition}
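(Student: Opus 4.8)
The plan is to identify the quotient cochain complex $C^{\ast}/\vartheta_{\geq k}(\tau_{\geq k}C^{\ast})$ with the truncation $\tau_{<k}C^{\ast}$, and then to deduce well-definedness from \Cref{lemma vanishing products} together with the hypothesis that $\int$ vanishes on $d(C^{c-1})$, and nondegeneracy from \Cref{lemma truncated poincare duality}. First I would construct an isomorphism of cochain complexes $\Phi\colon \tau_{<k}C^{\ast}\xrightarrow{\cong}C^{\ast}/\vartheta_{\geq k}(\tau_{\geq k}C^{\ast})$: in degrees below $k$ both complexes agree with $C^{\ast}$ and $\Phi$ is the identity; in degree $k$ the decomposition $C^{k}=D\oplus\operatorname{im}(d\colon C^{k-1}\to C^{k})$ defining the fixed standard cotruncation identifies the term $\operatorname{im}(d)$ of $\tau_{<k}C^{\ast}$ with the term $C^{k}/D$ of the quotient; in degrees above $k$ both complexes vanish. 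A direct check shows that $\Phi$ commutes with differentials and that $\pi_{\geq k}\circ\vartheta_{<k}=\Phi$, so $\Phi$ induces isomorphisms $\Phi^{\ast}\colon H^{r}(\tau_{<k}C^{\ast})\xrightarrow{\cong}H^{r}(C^{\ast}/\vartheta_{\geq k}(\tau_{\geq k}C^{\ast}))$ for all $r$, satisfying $H^{r}(\pi_{\geq k})\circ H^{r}(\vartheta_{<k})=\Phi^{\ast}$.

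Next I would check that the form descends to cohomology. Given a lift $\alpha\in C^{r}$ of a cocycle $\pi_{\geq k}(\alpha)$ of the quotient (equivalently $d\alpha\in\vartheta_{\geq k}(\tau_{\geq k}C^{r+1})$) and a cocycle $\beta\in\tau_{\geq l}C^{c-r}$, I would show that $\int\alpha\cdot\vartheta_{\geq l}(\beta)$ is unaffected by each of the three possible ambiguities. Replacing $\alpha$ by $\alpha+\xi$ with $\xi\in\vartheta_{\geq k}(\tau_{\geq k}C^{r})=\ker\pi_{\geq k}$ changes nothing because $\xi\cdot\vartheta_{\geq l}(\beta)=0$ by \Cref{lemma vanishing products}, using $k+l=c+1>c=r+(c-r)$. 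Replacing $\alpha$ by $\alpha+d\zeta$ changes the value by $\int d(\zeta\cdot\vartheta_{\geq l}(\beta))$, which is zero since $\int$ kills $d(C^{c-1})$, minus a term containing the factor $d\beta=0$. Replacing $\beta$ by $\beta+d\omega$ changes the value by $\int\alpha\cdot d(\vartheta_{\geq l}(\omega))$, which up to sign equals $\int d(\alpha\cdot\vartheta_{\geq l}(\omega))-\int(d\alpha)\cdot\vartheta_{\geq l}(\omega)$; the first term vanishes as before, and the second vanishes by \Cref{lemma vanishing products} applied to $d\alpha\in\vartheta_{\geq k}(\tau_{\geq k}C^{r+1})$ and $\vartheta_{\geq l}(\omega)\in\vartheta_{\geq l}(\tau_{\geq l}C^{c-r-1})$, again using $k+l>c$. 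The boundary cases $r\le 0$ or $r\ge c$ are trivial since one of the graded pieces involved then vanishes.

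Finally, for nondegeneracy I would transport the form along $\Phi^{\ast}$. Representing a class of $H^{r}(\tau_{<k}C^{\ast})$ by a cocycle $a$ and choosing the lift $\alpha=\vartheta_{<k}(a)$ of $\Phi^{\ast}[a]=[\pi_{\geq k}(\vartheta_{<k}(a))]$, and writing $b$ for a cocycle representing a class of $H^{c-r}(\tau_{\geq l}C^{\ast})$, the form of the proposition evaluates to $\int\vartheta_{<k}(a)\cdot\vartheta_{\geq l}(b)$. Since $\vartheta_{<k}(a)$ and $\vartheta_{\geq l}(b)$ are cocycles of $C^{\ast}$, their product computes the cup product $H^{r}(\vartheta_{<k})[a]\cup H^{c-r}(\vartheta_{\geq l})[b]$ in $H^{\ast}(C^{\ast})$, and because by hypothesis the isomorphism $H^{c}(C^{\ast})\cong\mathbb{Q}$ is induced by $\int$, this value equals $\langle H(\vartheta_{<k})[a],H(\vartheta_{\geq l})[b]\rangle$. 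Hence the form of the proposition, precomposed with the isomorphism $\Phi^{\ast}$ in the first variable, coincides with the pairing (\ref{truncated poincare duality algebra pairing}), which is nondegenerate by \Cref{lemma truncated poincare duality} since $k+l=c+1>c$; as $\Phi^{\ast}$ is an isomorphism, the original form is nondegenerate as well. I expect the only real friction to be the bookkeeping in the well-definedness step — keeping the three ambiguities apart and invoking \Cref{lemma vanishing products} with the correct degrees — while once the isomorphism $\Phi$ is in place the nondegeneracy is a formal consequence of \Cref{lemma truncated poincare duality}.
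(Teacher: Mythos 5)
Your proposal is correct and follows essentially the same route as the paper: descend the cochain-level form to cohomology using Lemma~\ref{lemma vanishing products} together with the hypothesis that $\int$ kills $d(C^{c-1})$, and then deduce nondegeneracy by observing that $\pi_{\geq k}\circ\vartheta_{<k}\colon\tau_{<k}C^{\ast}\to C^{\ast}/\vartheta_{\geq k}(\tau_{\geq k}C^{\ast})$ is an isomorphism (via the direct sum decomposition $C^{\ast}=\tau_{<k}C^{\ast}\oplus\tau_{\geq k}C^{\ast}$) identifying the form with the pairing~(\ref{truncated poincare duality algebra pairing}) of Lemma~\ref{lemma truncated poincare duality}. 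The only cosmetic difference is that you itemize the well-definedness check as three separate ambiguities, whereas the paper first disposes of the lift ambiguity on the cochain level and then handles the two coboundary ambiguities; the underlying computations are the same.
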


\begin{proof}
On cochain level, we have the bilinear form
\begin{align*}
\int \colon C^{r} / \vartheta_{\geq k}(\tau_{\geq k}C^{r}) \times \tau_{\geq l}C^{c-r} \rightarrow \mathbb{Q}, \quad (\pi_{\geq k}(\alpha), \beta) \mapsto \int \alpha \cdot \vartheta_{\geq l}(\beta),
\end{align*}
because for all $\alpha \in C^{r}$, $\eta \in \tau_{\geq k}C^{r}$, and $\beta \in \tau_{\geq l}C^{c-r}$, \Cref{lemma vanishing products} yields that
\begin{align*}
\int(\pi_{\geq k}(\alpha + \vartheta_{\geq k}(\eta)), \beta) - \int(\pi_{\geq k}(\alpha), \beta) = \int \vartheta_{\geq k}(\eta) \cdot \vartheta_{\geq l}(\beta) = 0.
\end{align*}

Let us show that the bilinear form $\int$ induces a bilinear form on cohomology, which we also denote by the symbol $\int$.
For this purpose, we consider closed elements $\pi_{\geq k}(\alpha) \in C^{r} / \tau_{\geq k}C^{r}$ (where $\alpha \in C^{r}$) and $\beta \in \tau_{\geq l}C^{c-r}$.
If $\pi_{\geq k}(\alpha) = d \pi_{\geq k}(\eta)$ for some $\eta \in C^{r-1}$, then
\begin{align*}
\int (\pi_{\geq k}(\alpha), \beta) = \int d\eta \cdot \vartheta_{\geq l}(\beta) = \int d(\eta \cdot \vartheta_{\geq l}(\beta)) = 0.
\end{align*}
If $\beta = d \omega$ for some $\omega \in \tau_{\geq l}C^{c-r-1}$, then we apply \Cref{lemma vanishing products} (using that $d \alpha \in \vartheta_{\geq k}(\tau_{\geq k}C^{r+1})$ because $\pi_{\geq k}(\alpha)$ is closed) to conclude that
\begin{align*}
\int (\pi_{\geq k}(\alpha), \beta) = \int \alpha \cdot \vartheta_{\geq l}(d\omega) = \pm \int d(\alpha \cdot \vartheta_{\geq l}(\omega)) \pm \int d\alpha \cdot \vartheta_{\geq l}(\omega) = 0.
\end{align*}

It remains to show that the bilinear form $\int$ is nondegenerate on cohomology.
For this purpose, we note that the composition
\begin{align*}
\tau_{< k}C^{\ast} \xrightarrow{\vartheta_{<k}} C^{\ast} \xrightarrow{\pi_{\geq k}} C^{\ast} / \vartheta_{\geq k}(\tau_{\geq k}C^{\ast})
\end{align*}
is an isomorphism of cochain complexes because we have the direct sum composition $C^{r} = \tau_{< k}C^{r} \oplus \tau_{\geq k}C^{r}$ by definition of $k$-(co)truncation cochain complexes in \Cref{Cotruncation of commutative cochain algebras}.
But under the induced isomorphism on cohomology, our bilinear form corresponds to the pairing (\ref{truncated poincare duality algebra pairing}), which is nondegenerate by \Cref{lemma truncated poincare duality}.
\end{proof}

%\newpage
\section{A commutative model for intersection spaces}\label{A commutative model for intersection spaces}
Let $X^{n}$ be a compact $n$-dimensional topologically stratified pseudomanifold with one isolated singularity $x$.
In other words, the regular stratum $X \setminus \{x\}$ of $X$ is a topological manifold of dimension $n \geq 2$, and the singular stratum $\{x\}$ of $X$ has an open neighborhood $U_{x}$ in $X$ which is homeomorphic to the open cone on some $(n-1)$-dimensional closed topological manifold $L$, where $x$ corresponds to the cone point.
The manifold $L$ is called the link of $x$, and will be assumed to be connected in this paper.
We fix a distinguished neighborhood $U_{x}$ of $x$ once and for all.
Then, the complement $M = X \setminus U_{x}$ is a compact topological $n$-manifold with boundary $L = \partial M$, and we can write $X = M \cup_{\partial M} \operatorname{cone}(\partial M)$.
%
% and can therefore write
%$$
%,
%$$
%where $M$ is the  that arises as the complement of the distinguished neighborhood.

\subsection{Construction of intersection spaces}
Let $\overline{p}$ be a perversity.
That is, $\overline{p}$ is a function $\{2, 3, 4, \dots\} \rightarrow \{0, 1, 2, \dots\}$ which satisfies the Goresky-MacPherson growth conditions $\overline{p}(2) = 0$ and $\overline{p}(s) \leq \overline{p}(s+1) \leq \overline{p}(s) + 1$ for all $s \in \{2, 3, \dots\}$.
As the singular point $x$ has codimension $n$ in $X$, we have to consider the cutoff degree $k = k(\overline{p}) = n-1 - \overline{p}(n)$ for the link $L = \partial M$ of $x$.
Note that $k > 0$ holds by the growth conditions of $\overline{p}$.
We choose a spatial homology truncation (Moore approximation) $f_{<k} \colon L_{<k} \rightarrow L$ in order to truncate the integral homology groups of the link $L$ in degrees $k$ and above.
That is, $f_{<k}$ induces an isomorphism $H_{r}(L_{<k}; \mathbb{Z}) \xrightarrow{\cong} H_{r}(L; \mathbb{Z})$ for $r < k$, and we have $H_{r}(L_{<k}; \mathbb{Z}) = 0$ for $r \geq k$.
To construct $f_{<k}$, we compose a CW approximation $L' \rightarrow L$ of $L$ with a Moore approximation $L_{<k} \rightarrow L'$ of the CW complex $L'$ by using Corollary 1.4 in \cite{wra}.
(In particular, it is not necessary to assume as in \cite{ban} that the link $L$ is simply connected.)
%If $L$ is not a CW complex, then we apply the above result to a  of $L$.

Recall from \cite{ban} that the perversity $\overline{p}$ intersection space $I^{\overline{p}}X$ associated to $X$ is defined as the homotopy cofiber of the composition
$$
g \colon L_{<k} \xrightarrow{f_{<k}} L = \partial M \hookrightarrow M.
$$
That is, we have
$$
I^{\overline{p}}X = \operatorname{cone}(g) = M \cup_{\partial M = L} \operatorname{cone}(f_{<k}).
$$
We take the cone point $\ast \in \operatorname{cone}(f_{<k})$ as the natural basepoint of the intersection space $I^{\overline{p}}X$.

\begin{remark}\label{higher stratification depth}
Intersection spaces have first been constructed for pseudomanifolds of stratification depth $1$ with trivial link bundle in \cite{ban2, ban}, and with certain twisted link bundles in \cite{bc}.
A construction for some special pseudomanifolds of stratification depth $2$ has been studied in \cite{ban3}.
Recently, Agust\'{i}n and de Bobadilla \cite{ab} have proposed an inductive procedure to construct relative intersection spaces $I^{\overline{p}}(X, \Sigma)$ for pseudomanifolds $X$ with singular set $\Sigma$ of arbitrary stratification depth with compatible link bundles.
\end{remark}

\subsection{The commutative model $AI_{\overline{p}}(X)$}
Let $\overline{p}$ be a perversity.
In the following definition, we introduce the central object $AI_{\overline{p}}(X)$ of this paper, which is an augmented commutative cochain algebra whose construction is based on the choice of a standard $k$-cotruncation of $A_{PL}(L)$.
\Cref{main result B} below shows that $AI_{\overline{p}}(X)$ is an augmented commutative model for the pointed intersection space $I^{\overline{p}}X$.

\begin{definition}\label{definition convenient rational model}
Given an augmented standard $k$-cotruncation
$$
(\vartheta_{\geq k}, \varepsilon) = (\vartheta_{\geq k}^{D}, \varepsilon_{D}) \colon \widehat{\tau}_{\geq k}A_{PL}(L) = \widehat{\tau}_{\geq k}^{D}A_{PL}(L) \hookrightarrow A_{PL}(L) \oplus \mathbb{Q} = A_{PL}(L \sqcup \ast)
$$
of $A_{PL}(L)$ for the cutoff degree $k = n-1 - \overline{p}(n)$ (see \Cref{example standard cotruncation}), we define the commutative cochain algebra $AI_{\overline{p}}(X)$ as the fiber product that completes the fiber product square
\begin{equation}\label{diagram definition of commutative model}
\begin{tikzcd}
AI_{\overline{p}}(X) = A_{PL}(M \sqcup \ast) \times_{A_{PL}(\partial M \sqcup \ast)} \widehat{\tau}_{\geq k}A_{PL}(L) \ar[r, twoheadrightarrow, "\rho_{\overline{p}}"] \ar[d, hook, "\iota_{\overline{p}}"] & \widehat{\tau}_{\geq k}A_{PL}(L) \ar[d, hook, "{(\vartheta_{\geq k}, \varepsilon)}"] \\
%, hook, "(\vartheta_{\geq k}, \varepsilon)"] \\
A_{PL}(M \sqcup \ast) \ar[r, twoheadrightarrow, "\operatorname{incl}^{\ast}"] & A_{PL}(L \sqcup \ast).
\end{tikzcd}
\end{equation}
%
%\begin{displaymath}
%A_{PL}(M \sqcup \ast) \rightarrow A_{PL}(\partial M \sqcup \ast) \xhookleftarrow{(\vartheta_{\geq k}, \varepsilon)} \widehat{\tau}_{\geq k}A_{PL}(L).
%\end{displaymath}
%Projection to the first factor of the fiber product yields an inclusion
%$$
%\iota_{\overline{p}} \colon AI_{\overline{p}}(X) \rightarrow A_{PL}(M \sqcup \ast).
%$$
%$\iota_{\overline{p}} \colon AI_{\overline{p}}(X, \ast) \rightarrow A_{PL}(M)$ denote the restriction of this morphism to
We equip $AI_{\overline{p}}(X)$ with the canonical augmentation given by the composition
\begin{align}\label{canonical augmentation}
AI_{\overline{p}}(X) \xrightarrow{(\ref{diagram definition of commutative model})} A_{PL}(L \sqcup \ast) \xrightarrow{\operatorname{incl}^{\ast}} A_{PL}(\ast) = \mathbb{Q}.
\end{align}
%where the identification $A_{PL}(L \sqcup \ast) = A_{PL}(L) \oplus A_{PL}(\ast)$ is induced by inclusion maps \textcolor{red}{(remove behavior under disjoint union to preliminary section on $A_{PL}$)}, and $A_{PL}(L) \oplus A_{PL}(\ast) \rightarrow A_{PL}(\ast)$ is projection to the second component.
\end{definition}

\begin{remark}
While $AI_{\overline{p}}(X)$ depends on the choice of the standard cotruncation $\vartheta_{\geq k}^{D}$, different choices lead to commutative cochain algebras that are weakly equivalent over $A_{PL}(M \sqcup \ast) = A_{PL}(M) \oplus \mathbb{Q}$ by the last part of \Cref{corollary weak equivalence of fiber products}.
\end{remark}

The following proposition will be used in the proof of \Cref{main result} (see \Cref{proof of main result}).

\begin{proposition}\label{proposition essential diagrams}
\begin{enumerate}[(i)]
\item Let $AI_{\overline{p}}(X, x)$ denote the augmentation ideal of the augmented commutative cochain algebra $AI_{\overline{p}}(X)$.
Diagram (\ref{diagram definition of commutative model}) restricts (by abuse of notation) to a fiber product square
\begin{equation}\label{diagram restricted fiber product square}
\begin{tikzcd}
AI_{\overline{p}}(X, x) \ar[r, twoheadrightarrow, "\rho_{\overline{p}}"] \ar[d, hook, "\iota_{\overline{p}}"] & \tau_{\geq k}A_{PL}(L) \ar[d, hook, "\vartheta_{\geq k}|"] \\
%, hook, "(\vartheta_{\geq k}, \varepsilon)"] \\
A_{PL}(M) \ar[r, twoheadrightarrow, "i^{\ast}"] & A_{PL}(L)
\end{tikzcd}
\end{equation}
of commutative cochain complexes, where $i \colon L = \partial M \rightarrow M$ is the inclusion.
\item Let $\eta_{\overline{p}} \colon \operatorname{ker} \rho_{\overline{p}} \hookrightarrow AI_{\overline{p}}(X, x)$ denote the inclusion.
There is a canonical identification $\operatorname{ker} \rho_{\overline{p}} = A_{PL}(M, \partial M)$ such that the diagram
\begin{center}
\begin{tikzcd}
A_{PL}(M, \partial M) \ar[rd, hook, swap, "j^{\ast}"] \ar[r, hook, "\eta_{\overline{p}}"] & AI_{\overline{p}}(X, x) \ar[d, hook, "\iota_{\overline{p}}"] \\
%, hook, "(\vartheta_{\geq k}, \varepsilon)"] \\
 & A_{PL}(M)
\end{tikzcd}
\end{center}
commutes, where $j \colon (M, \emptyset) \rightarrow (M, \partial M)$ denotes the inclusion of pairs.
\item Let $\kappa_{\overline{p}} \colon A_{PL}(M) \twoheadrightarrow A_{PL}(M) / \operatorname{im} \iota_{\overline{p}}$ denote the quotient map.
There is a canonical identification $A_{PL}(M) / \operatorname{im} \iota_{\overline{p}} = A_{PL}(L) / \vartheta_{\geq k}(\tau_{\geq k}A_{PL}(L))$ such that the diagram
\begin{center}
\begin{tikzcd}
A_{PL}(M) \ar[rd, twoheadrightarrow, swap, "\kappa_{\overline{p}}"] \ar[r, twoheadrightarrow, "i^{\ast}"] & A_{PL}(L) \ar[d, twoheadrightarrow, "\pi_{\geq k}"] \\
%, hook, "(\vartheta_{\geq k}, \varepsilon)"] \\
 & A_{PL}(L) / \vartheta_{\geq k}(\tau_{\geq k}A_{PL}(L))
\end{tikzcd}
\end{center}
commutes, where $\pi_{\geq k}$ denotes the quotient map.
\end{enumerate}
\end{proposition}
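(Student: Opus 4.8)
The three assertions are all bookkeeping statements about the fiber product square (\ref{diagram definition of commutative model}), so the strategy is to unpack the definition of the fiber product explicitly and track the augmentation ideal, the kernel of the projection, and the cokernel of the inclusion through the two short exact sequences associated to the pair $(M, \partial M)$ and to the disjoint union with a point. Throughout I will use the identifications $A_{PL}(M \sqcup \ast) = A_{PL}(M) \oplus \mathbb{Q}$, $A_{PL}(\partial M \sqcup \ast) = A_{PL}(L) \oplus \mathbb{Q}$, and $\widehat{\tau}_{\geq k}A_{PL}(L) = \tau_{\geq k}A_{PL}(L) \oplus \mathbb{Q}$, under which the vertical map $(\vartheta_{\geq k}, \varepsilon)$ becomes $\vartheta_{\geq k}| \oplus \operatorname{id}_{\mathbb{Q}}$ and the bottom map $\operatorname{incl}^{\ast}$ becomes $i^{\ast} \oplus \operatorname{id}_{\mathbb{Q}}$. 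Concretely, an element of $AI_{\overline{p}}(X)$ is a triple $(\omega, q, \tau)$ with $\omega \in A_{PL}(M)$, $q \in \mathbb{Q}$, and $\tau \in \tau_{\geq k}A_{PL}(L)$ such that $i^{\ast}\omega = \vartheta_{\geq k}(\tau)$ (the rational components automatically match), so that $AI_{\overline{p}}(X) \cong \bigl(A_{PL}(M) \times_{A_{PL}(L)} \tau_{\geq k}A_{PL}(L)\bigr) \oplus \mathbb{Q}$ and the canonical augmentation (\ref{canonical augmentation}) is projection to the $\mathbb{Q}$-summand.

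\textbf{Part (i).} From the description above, the augmentation ideal is exactly the $q=0$ part, i.e. $AI_{\overline{p}}(X,x) = A_{PL}(M) \times_{A_{PL}(L)} \tau_{\geq k}A_{PL}(L)$, the honest fiber product of $i^{\ast} \colon A_{PL}(M) \twoheadrightarrow A_{PL}(L)$ and $\vartheta_{\geq k}| \colon \tau_{\geq k}A_{PL}(L) \hookrightarrow A_{PL}(L)$. I would first check that $i^{\ast}$ is surjective (this is the standard short exact sequence $0 \to A_{PL}(M,\partial M) \to A_{PL}(M) \to A_{PL}(L) \to 0$ recalled in \Cref{The commutative cochain algebra apl}), so that the projection $\rho_{\overline{p}}$ onto the $\tau_{\geq k}A_{PL}(L)$ factor of this fiber product is also surjective; then square (\ref{diagram restricted fiber product square}) is literally the fiber product square for these two maps, and it is a square of commutative cochain complexes because the ambient square (\ref{diagram definition of commutative model}) is. This is essentially immediate once the identification is written down.

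\textbf{Parts (ii) and (iii).} For (ii), the kernel of $\rho_{\overline{p}} \colon AI_{\overline{p}}(X,x) \to \tau_{\geq k}A_{PL}(L)$ consists of pairs $(\omega, 0)$ with $i^{\ast}\omega = 0$, i.e. with $\omega \in \ker i^{\ast} = A_{PL}(M, \partial M)$; the map $\omega \mapsto (\omega, 0)$ is the required canonical identification $\ker\rho_{\overline{p}} = A_{PL}(M,\partial M)$, and composing with $\iota_{\overline{p}} \colon (\omega,\tau) \mapsto \omega$ recovers exactly the inclusion $j^{\ast} \colon A_{PL}(M,\partial M) \hookrightarrow A_{PL}(M)$, giving the commuting triangle. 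For (iii), the image of $\iota_{\overline{p}}$ in $A_{PL}(M)$ is the set of $\omega$ for which $i^{\ast}\omega$ lies in $\vartheta_{\geq k}(\tau_{\geq k}A_{PL}(L))$ (such a $\tau$ being unique since $\vartheta_{\geq k}|$ is injective); hence $i^{\ast}$ descends to an isomorphism $A_{PL}(M)/\operatorname{im}\iota_{\overline{p}} \xrightarrow{\cong} A_{PL}(L)/\vartheta_{\geq k}(\tau_{\geq k}A_{PL}(L))$, using surjectivity of $i^{\ast}$, and this isomorphism is the asserted canonical identification making the triangle with $\kappa_{\overline{p}}$ and $\pi_{\geq k}$ commute. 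I do not anticipate a genuine obstacle here: the only point requiring a little care is the systematic use of the short exact sequence $0 \to A_{PL}(M,\partial M) \to A_{PL}(M) \xrightarrow{i^{\ast}} A_{PL}(L) \to 0$ to know that $i^{\ast}$ is surjective with the expected kernel, and the injectivity of $\vartheta_{\geq k}|$ on $\tau_{\geq k}A_{PL}(L)$, both of which are already available; everything else is a diagram chase through the explicit description of the fiber product.
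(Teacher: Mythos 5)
Your proof is correct and takes essentially the same approach as the paper: both unpack the fiber product square (\ref{diagram definition of commutative model}) and use the short exact sequence for $(M,\partial M)$ (surjectivity of $i^{\ast}$ with kernel $A_{PL}(M,\partial M)$) together with injectivity of $\vartheta_{\geq k}|$. Your upfront explicit description of $AI_{\overline{p}}(X)$ as $(A_{PL}(M)\times_{A_{PL}(L)}\tau_{\geq k}A_{PL}(L))\oplus\mathbb{Q}$ simply makes the paper's ``restrict to augmentation ideals'' and the two surjectivity/injectivity checks immediate, but it is the same argument.
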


\begin{proof}
The morphisms in diagram (\ref{diagram definition of commutative model}) are compatible with the canonical augmentations $AI_{\overline{p}}(X, x) \rightarrow \mathbb{Q}$ (\ref{canonical augmentation}), $\varepsilon \colon \widehat{\tau}_{\geq k}A_{PL}(L) \rightarrow \mathbb{Q}$, $A_{PL}(M \sqcup \ast) \xrightarrow{\operatorname{incl}^{\ast}} A_{PL}(\ast) = \mathbb{Q}$, and $A_{PL}(L \sqcup \ast) \xrightarrow{\operatorname{incl}^{\ast}} A_{PL}(\ast) = \mathbb{Q}$.
Thus, part (i) follows by restricting diagram (\ref{diagram definition of commutative model}) to the corresponding augmentation ideals.
As for part (ii), we note that $\iota_{\overline{p}}$ restricts under $\eta_{\overline{p}} \colon \operatorname{ker} \rho_{\overline{p}} \hookrightarrow AI_{\overline{p}}(X, x)$ and $j^{\ast}$ to an injection $\operatorname{ker} \rho_{\overline{p}} \hookrightarrow \operatorname{ker} i^{\ast} = A_{PL}(M, \partial M)$.
To show surjectivity, we note that for any $y \in \operatorname{ker} i^{\ast}$ there is $x \in AI_{\overline{p}}(X, x)$ such that $\iota_{\overline{p}}(x) = y$ and $\rho_{\overline{p}}(x) = 0$ in the fiber product square (\ref{diagram restricted fiber product square}).
As for part (iii), we note that $i^{\ast}$ restricts under $\kappa_{\overline{p}} \colon A_{PL}(M) \twoheadrightarrow A_{PL}(M) / \operatorname{im} \iota_{\overline{p}}$ and $\pi_{\geq k}$ to a surjection $A_{PL}(M) / \operatorname{im} \iota_{\overline{p}} \twoheadrightarrow A_{PL}(L) / \vartheta_{\geq k}(\tau_{\geq k}A_{PL}(L))$.
To show injectivity, we observe that for any $y \in A_{PL}(M)$ such that $i^{\ast}(y) \in \vartheta_{\geq k}(\tau_{\geq k}A_{PL}(L))$ there is $x \in AI_{\overline{p}}(X, x)$ such that $\iota_{\overline{p}}(x) = y$ by using the fiber product square (\ref{diagram restricted fiber product square}).
\end{proof}
%We note that $AI_{\overline{p}}(X)$ has a canonical augmentation given by the composition

Recall from \Cref{Weak equivalences} the notion of weak equivalence of morphisms.
The following result implies that $AI_{\overline{p}}(X)$ is a augmented commutative model for the pointed intersection space $I^{\overline{p}}X$.
In particular, the weak equivalence class determined by either of the augmented commutative cochain algebras $AI_{\overline{p}}(X)$ and $A_{PL}(I^{\overline{p}}X)$ does not depend on the choice of the Moore approximation $f_{<k}$ or on the choice of the standard cotruncation $\vartheta_{\geq k}^{D}$.

\begin{theorem}\label{main result B}
The morphism $A_{PL}(I^{\overline{p}}X) \rightarrow A_{PL}(M \sqcup \ast)$ induced by inclusion is weakly equivalent over $A_{PL}(M \sqcup \ast)$ to the morphism $\iota_{\overline{p}} \colon AI_{\overline{p}}(X) \rightarrow A_{PL}(M \sqcup \ast)$.
\end{theorem}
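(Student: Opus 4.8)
The plan is to present both morphisms in the statement as the projection onto the first factor of a pullback square over the restriction $\operatorname{incl}^{\ast}\colon A_{PL}(M\sqcup\ast)\twoheadrightarrow A_{PL}(\partial M\sqcup\ast)$, and to compare the two squares by means of \Cref{corollary weak equivalence of fiber products}. For $\iota_{\overline{p}}$ this presentation is diagram (\ref{diagram definition of commutative model}) by definition, so the work is on the $A_{PL}(I^{\overline{p}}X)$ side. Writing $\operatorname{cone}(f_{<k})$ for the mapping cone of the Moore approximation $f_{<k}\colon L_{<k}\rightarrow L$, I would first record the pushout presentation
\[
I^{\overline{p}}X=(M\sqcup\ast)\cup_{L\sqcup\ast}\operatorname{cone}(f_{<k}),
\]
where $L\sqcup\ast\hookrightarrow M\sqcup\ast$ is the inclusion given by $\partial M=L\subset M$ on the first summand and the identity on the second, while $L\sqcup\ast\hookrightarrow\operatorname{cone}(f_{<k})$ sends $L$ to the base of the mapping cone and $\ast$ to the cone point. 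Carrying the disjoint basepoint along in this way is what makes the eventual comparison genuinely take place over $A_{PL}(M\sqcup\ast)$.

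Since $\partial M\hookrightarrow M$ is a cofibration, so is $L\sqcup\ast\hookrightarrow M\sqcup\ast$, hence the pushout above is a homotopy pushout. Applying $A_{PL}$ then produces a homotopy pullback square of commutative cochain algebras; because $\operatorname{incl}^{\ast}$ is surjective, this homotopy pullback is computed by the strict fiber product, so that the canonical morphism
\[
A_{PL}(I^{\overline{p}}X)\longrightarrow A_{PL}(M\sqcup\ast)\times_{A_{PL}(\partial M\sqcup\ast)}A_{PL}(\operatorname{cone}(f_{<k}))
\]
is a quasi-isomorphism (concretely, this follows from the Mayer--Vietoris sequence for $A_{PL}$-cohomology, available via the natural isomorphism $H(A_{PL}(-))\cong H^{\ast}(-;\mathbb{Q})$, together with the five lemma, using that restriction along a subspace inclusion is surjective for $A_{PL}$). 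This morphism is compatible with the projections to $A_{PL}(M\sqcup\ast)$, so the morphism $A_{PL}(I^{\overline{p}}X)\rightarrow A_{PL}(M\sqcup\ast)$ induced by inclusion is weakly equivalent over $A_{PL}(M\sqcup\ast)$ to the projection $\pi$ of the fiber product square built from $\operatorname{incl}^{\ast}$ and the restriction $g^{\ast}\colon A_{PL}(\operatorname{cone}(f_{<k}))\rightarrow A_{PL}(\partial M\sqcup\ast)$.

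Next I would recognize $g^{\ast}$ as an augmented $k$-cotruncation of $A_{PL}(L)$ in the sense of \Cref{definition cotruncation}. Under the identification $A_{PL}(\partial M\sqcup\ast)=A_{PL}(L)\oplus\mathbb{Q}$ the morphism $g^{\ast}$ equals $(r^{\ast},\varepsilon')$, where $r^{\ast}\colon A_{PL}(\operatorname{cone}(f_{<k}))\rightarrow A_{PL}(L)$ is restriction along the base inclusion $L\hookrightarrow\operatorname{cone}(f_{<k})$ and $\varepsilon'$ is evaluation at the cone point. Since $L$ is connected, so is $\operatorname{cone}(f_{<k})$, whence $H^{0}(A_{PL}(\operatorname{cone}(f_{<k})))=\mathbb{Q}$. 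The long exact homology sequence of the cofiber sequence $L_{<k}\xrightarrow{f_{<k}}L\rightarrow\operatorname{cone}(f_{<k})$, combined with the defining property of the Moore approximation (that $(f_{<k})_{\ast}$ is an isomorphism in degrees below $k$ and $H_{r}(L_{<k})=0$ for $r\geq k$), shows that $\widetilde{H}_{r}(\operatorname{cone}(f_{<k});\mathbb{Q})=0$ for $0<r<k$ while $H_{r}(L;\mathbb{Q})\rightarrow H_{r}(\operatorname{cone}(f_{<k});\mathbb{Q})$ is an isomorphism for $r\geq k$. Dualizing, $H^{r}(A_{PL}(\operatorname{cone}(f_{<k})))=0$ for $0<r<k$ and $H^{r}(r^{\ast})$ is an isomorphism for $r\geq k$, so $r^{\ast}$ is a $k$-cotruncation of $A_{PL}(L)$ and $g^{\ast}=(r^{\ast},\varepsilon')$ is an augmented $k$-cotruncation. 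The morphism $(\vartheta_{\geq k},\varepsilon)$ of \Cref{definition convenient rational model} is an augmented standard $k$-cotruncation, hence also an augmented $k$-cotruncation of $A_{PL}(L)$, so by \Cref{proposition weak equivalence of cotruncations} the morphisms $g^{\ast}$ and $(\vartheta_{\geq k},\varepsilon)$ are weakly equivalent over $A_{PL}(L)\oplus\mathbb{Q}=A_{PL}(\partial M\sqcup\ast)$.

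Finally I would invoke \Cref{corollary weak equivalence of fiber products} with $A=A'=A_{PL}(M\sqcup\ast)$, $C=C'=A_{PL}(\partial M\sqcup\ast)$, $f=f'=\operatorname{incl}^{\ast}$, with $\alpha$ and $\gamma$ the identity morphisms, with $B=A_{PL}(\operatorname{cone}(f_{<k}))$ and $g=g^{\ast}$, and with $B'=\widehat{\tau}_{\geq k}A_{PL}(L)$ and $g'=(\vartheta_{\geq k},\varepsilon)$. The second fiber product square is then exactly diagram (\ref{diagram definition of commutative model}), so $\pi'=\iota_{\overline{p}}$, and the hypothesis of the lemma is precisely the weak equivalence over $A_{PL}(\partial M\sqcup\ast)$ between $g^{\ast}$ and $(\vartheta_{\geq k},\varepsilon)$ established in the previous paragraph. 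The lemma yields that $\pi$ and $\iota_{\overline{p}}$ are weakly equivalent over $A_{PL}(M\sqcup\ast)$, and composing with the weak equivalence of the second paragraph proves the theorem. I expect the main obstacle to be the second step, namely setting up the homotopy pushout presentation of $I^{\overline{p}}X$ with the disjoint basepoint and checking that $A_{PL}$ turns it into the strict fiber product up to quasi-isomorphism; once this is in place, the cotruncation bookkeeping in the remaining steps is essentially formal given \Cref{Cotruncation of commutative cochain algebras}.
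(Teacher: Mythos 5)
Your proof is correct and follows essentially the same route as the paper's: present both sides as the first-factor projection of a fiber product over $\operatorname{incl}^{\ast}\colon A_{PL}(M\sqcup\ast)\twoheadrightarrow A_{PL}(\partial M\sqcup\ast)$, identify the second legs as augmented $k$-cotruncations of $A_{PL}(L)$, then apply \Cref{proposition weak equivalence of cotruncations} and \Cref{corollary weak equivalence of fiber products}. The only deviations are cosmetic: the paper cites Proposition 13.5 of \cite{fht} where you re-derive the quasi-isomorphism onto the fiber product via Mayer--Vietoris, and you spell out via the cofiber sequence $L_{<k}\to L\to\operatorname{cone}(f_{<k})$ why the restriction $A_{PL}(\operatorname{cone}(f_{<k}))\to A_{PL}(L\sqcup\ast)$ is an augmented $k$-cotruncation, a verification the paper leaves implicit before invoking \Cref{proposition weak equivalence of cotruncations}.
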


\begin{proof}
We apply Proposition 13.5 in \cite{fht} to the commutative diagram of inclusions
\begin{center}
\begin{tikzcd}
\partial M \sqcup \ast \ar{r} \ar{d} & \operatorname{cone}(f_{<k}) \ar{d} \\
M \sqcup \ast \ar{r}{h} & I^{\overline{p}}X
\end{tikzcd}
\end{center}
to obtain a commutative diagram
\begin{center}
\begin{tikzcd}
A_{PL}(I^{\overline{p}}X) \ar{r}{\simeq} \ar{d}{h^{\ast}} & A_{PL}(M \sqcup \ast) \times_{A_{PL}(\partial M \sqcup \ast)} A_{PL}(\operatorname{cone}(f_{<k})) \ar{d}{\pi} \\
A_{PL}(M \sqcup \ast) \ar{r}{=} & A_{PL}(M \sqcup \ast)
\end{tikzcd}
\end{center}
that provides a quasi-isomorphism over $A_{PL}(M \sqcup \ast)$ from $h^{\ast}$ to the morphism
$$
\pi \colon A_{PL}(M \sqcup \ast) \times_{A_{PL}(\partial M \sqcup \ast)} A_{PL}(\operatorname{cone}(f_{<k})) \rightarrow A_{PL}(M \sqcup \ast)
$$
given by projection of the fiber product to the first component.
It remains to relate the morphism $\pi$ to the morphism
$$
\iota_{\overline{p}} \colon AI_{\overline{p}}(X) = A_{PL}(M \sqcup \ast) \times_{A_{PL}(\partial M \sqcup \ast)} \widehat{\tau}_{\geq k}A_{PL}(\partial M) \rightarrow A_{PL}(M \sqcup \ast)
$$
%$\iota_{\overline{p}} \colon AI_{\overline{p}}(X) \rightarrow A_{PL}(M \sqcup \ast)$
obtained as in \Cref{definition convenient rational model} from the choice of an augmented standard $k$-cotruncation
$$
(\vartheta_{\geq k}, \varepsilon) \colon \widehat{\tau}_{\geq k}A_{PL}(\partial M) \rightarrow A_{PL}(\partial M \sqcup \ast) = A_{PL}(\partial M) \oplus \mathbb{Q}
$$
of $A_{PL}(\partial M)$ (see \Cref{example standard cotruncation}).
For this purpose, we observe that the morphisms $\operatorname{incl}^{\ast} \colon A_{PL}(\operatorname{cone}(f_{<k})) \rightarrow A_{PL}(L \sqcup \ast)$ and $(\vartheta_{\geq k}, \varepsilon)$ are weakly equivalent over $A_{PL}(\partial M \sqcup \ast)$ by \Cref{proposition weak equivalence of cotruncations}.
Then, we see that the two morphisms $\pi$ and $\iota_{\overline{p}}$ are indeed weakly equivalent over $A_{PL}(M \sqcup \ast) = A_{PL}(M) \oplus \mathbb{Q}$ by applying the last part of \Cref{corollary weak equivalence of fiber products} to the diagram
\begin{center}
\begin{tikzcd}
A_{PL}(M \sqcup \ast) \times_{A_{PL}(\partial M \sqcup \ast)} A_{PL}(\operatorname{cone}(f_{<k})) \ar[r] \ar[d, "\pi"] & A_{PL}(\operatorname{cone}(f_{<k})) \ar[d, "\operatorname{incl}^{\ast}"] \\
%, hook, "(\vartheta_{\geq k}, \varepsilon)"] \\
A_{PL}(M \sqcup \ast) \ar[r, twoheadrightarrow, "\operatorname{incl}^{\ast}"] & A_{PL}(L \sqcup \ast)
\end{tikzcd}
\end{center}
and to the diagram (\ref{diagram definition of commutative model}), as well as to the identity morphisms on $A_{PL}(M \sqcup \ast)$ and $A_{PL}(L \sqcup \ast)$.

\end{proof}

%\newpage
\section{Proof of \Cref{main result}}\label{proof of main result}
Let $X^{n}$ be a compact $n$-dimensional topologically stratified pseudomanifold with one isolated singularity $x$ and connected link $L$.
As in \Cref{A commutative model for intersection spaces}, we fix a decomposition $X^{n} = M \cup_{\partial M} \operatorname{cone}(\partial M)$, where $M$ is a compact topological $n$-manifold with boundary $L = \partial M$.
Throughout this section, let $i \colon L = \partial M \rightarrow M$ and $j \colon (M, \emptyset) \rightarrow (M, \partial M)$ denote the inclusions.

Let $\overline{p}$ and $\overline{q}$ be complementary perversities.
That is, we have $\overline{p}(s) + \overline{q}(s) = s-2$ for all $s \in \{2, 3, \dots\}$.
Following \Cref{proposition essential diagrams}(i), we have inclusions $\iota_{\overline{p}} \colon AI_{\overline{p}}(X, x) \hookrightarrow A_{PL}(M)$ and $\iota_{\overline{p}} \colon AI_{\overline{p}}(X, x) \hookrightarrow A_{PL}(M)$ of commutative cochain complexes.

Let $X$ be $\mathbb{Q}$-oriented, which means that the regular stratum $X \setminus \{x\}$ of $X$ is equipped with a $\mathbb{Q}$-orientation.
Let $\mu \in C_{n}(M; \mathbb{Q})$ be a normalized singular chain whose image under the map $j_{\sharp} \colon C_{n}(M; \mathbb{Q}) \rightarrow C_{n}(M, \partial M; \mathbb{Q})$ represents the fundamental class in $H_{n}(M, \partial M; \mathbb{Q})$ induced by the given $\mathbb{Q}$-orientation of $X$.
As explained in \Cref{The commutative cochain algebra apl}, integration of polynomial differential forms over simplices of $\mu$ induces a linear form $\int_{\mu} \colon A_{PL}(M) \rightarrow \mathbb{Q}$.

Then, we have the following
\begin{lemma}\label{lemma existence of bilinear form on cohomology}
Multiplication in $A_{PL}(M)$ followed by integration over $\mu$ induces a bilinear form
\begin{align}\label{bilinear form main theorem}
\int_{\mu} \colon H^{r}(AI_{\overline{p}}(X, x)) \times H^{n-r}(AI_{\overline{q}}(X, x)) \rightarrow \mathbb{Q}, \quad ([\alpha], [\beta]) \mapsto \int_{\mu} \iota_{\overline{p}}(\alpha) \cdot \iota_{\overline{q}}(\beta).
\end{align}
\end{lemma}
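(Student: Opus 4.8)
The plan is to imitate the argument in the proof of \Cref{poincare lefschetz duality theorem}, reducing the well-definedness of (\ref{bilinear form main theorem}) to Stokes' theorem (\Cref{stokes theorem}) together with the fiber product description of $AI_{\overline{p}}(X,x)$ and $AI_{\overline{q}}(X,x)$ from \Cref{proposition essential diagrams}. On the cochain level, multiplication in $A_{PL}(M)$ followed by $\int_{\mu}$ already gives a bilinear pairing $AI_{\overline{p}}^{r}(X,x) \times AI_{\overline{q}}^{n-r}(X,x) \rightarrow \mathbb{Q}$, $(\alpha,\beta) \mapsto \int_{\mu}\iota_{\overline{p}}(\alpha)\cdot\iota_{\overline{q}}(\beta)$, so the only point is that this descends to cohomology. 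As in the proof of \Cref{poincare lefschetz duality theorem}, it suffices to treat the two cases where one of the two classes is altered by a coboundary; by symmetry I would only write out one of them.

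So suppose $\alpha,\alpha' \in AI_{\overline{p}}^{r}(X,x)$ are closed with $\alpha' - \alpha = d\gamma$ for some $\gamma \in AI_{\overline{p}}^{r-1}(X,x)$, and $\beta \in AI_{\overline{q}}^{n-r}(X,x)$ is closed. Then
\begin{align*}
\int_{\mu}\iota_{\overline{p}}(\alpha')\cdot\iota_{\overline{q}}(\beta) - \int_{\mu}\iota_{\overline{p}}(\alpha)\cdot\iota_{\overline{q}}(\beta) = \int_{\mu}\iota_{\overline{p}}(d\gamma)\cdot\iota_{\overline{q}}(\beta) = \int_{\mu} d\bigl(\iota_{\overline{p}}(\gamma)\cdot\iota_{\overline{q}}(\beta)\bigr),
\end{align*}
using that $\iota_{\overline{p}}$ is a morphism of cochain complexes, that $\iota_{\overline{q}}(\beta)$ is closed, and the Leibniz rule in $A_{PL}(M)$. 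Since $j_{\sharp}(\mu) \in C_{n}(M,\partial M)$ is closed, Stokes' theorem (\Cref{stokes theorem}) gives $\int_{\mu} d\bigl(\iota_{\overline{p}}(\gamma)\cdot\iota_{\overline{q}}(\beta)\bigr) = \int_{\partial\mu} i^{\ast}\bigl(\iota_{\overline{p}}(\gamma)\cdot\iota_{\overline{q}}(\beta)\bigr) = \int_{\partial\mu} i^{\ast}\iota_{\overline{p}}(\gamma)\cdot i^{\ast}\iota_{\overline{q}}(\beta)$. Now I invoke the fiber product squares of \Cref{proposition essential diagrams}(i): one has $i^{\ast}\iota_{\overline{p}}(\gamma) = \vartheta_{\geq k}\rho_{\overline{p}}(\gamma) \in \vartheta_{\geq k}(\tau_{\geq k}A_{PL}(L))$ and $i^{\ast}\iota_{\overline{q}}(\beta) = \vartheta_{\geq l}\rho_{\overline{q}}(\beta) \in \vartheta_{\geq l}(\tau_{\geq l}A_{PL}(L))$, where $k = k(\overline{p}) = n-1-\overline{p}(n)$ and $l = k(\overline{q}) = n-1-\overline{q}(n)$. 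Because $\overline{p}$ and $\overline{q}$ are complementary, $\overline{p}(n)+\overline{q}(n) = n-2$, hence $k+l = 2(n-1)-(n-2) = n$, and the two forms $i^{\ast}\iota_{\overline{p}}(\gamma)$, $i^{\ast}\iota_{\overline{q}}(\beta)$ live in degrees $r-1$ and $n-r$ of $A_{PL}(L)$ with $(r-1)+(n-r) = n-1 < n = k+l$. Therefore \Cref{lemma vanishing products} applies and forces $i^{\ast}\iota_{\overline{p}}(\gamma)\cdot i^{\ast}\iota_{\overline{q}}(\beta) = 0$ in $A_{PL}(L)$, so the difference of the two integrals vanishes. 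The symmetric case ($\alpha = \alpha'$, $\beta' - \beta = d\omega$) is identical after shifting the coboundary onto $\beta$.

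The step I expect to require the most care is the degree bookkeeping showing that \Cref{lemma vanishing products} is applicable — in particular verifying the identity $k(\overline{p}) + k(\overline{q}) = n$ from complementarity of the perversities, and checking that the relevant product lands in a total degree strictly below $k+l$ (here $n-1$), which is precisely what makes the boundary term over $\partial\mu$ disappear; everything else is a routine application of Stokes' theorem and the Leibniz rule, exactly as in \Cref{poincare lefschetz duality theorem}. (Nondegeneracy is not part of this lemma and will be addressed separately in the proof of \Cref{main result}.)
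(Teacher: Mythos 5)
Your proposal is correct and follows essentially the same route as the paper's proof: reduce to changing one argument by a coboundary, use the Leibniz rule and Stokes' theorem (\Cref{stokes theorem}) to push the integral to $\partial\mu$, identify $i^{\ast}\iota_{\overline{p}}(\gamma)$ and $i^{\ast}\iota_{\overline{q}}(\beta)$ via the fiber product squares of \Cref{proposition essential diagrams}(i) as lying in the cotruncated complexes, and invoke \Cref{lemma vanishing products} to kill the product on the boundary. The explicit degree bookkeeping ($k+l=n$ from complementarity, total degree $n-1 < k+l$) is a worthwhile detail that the paper leaves implicit in its appeal to \Cref{lemma vanishing products}.
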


\begin{proof}
Let us show that the bilinear form
\begin{align*}
\int_{\mu} \colon AI_{\overline{p}}^{r}(X, x) \times AI_{\overline{q}}^{n-r}(X, x) \rightarrow \mathbb{Q}, \quad (\alpha, \beta) \mapsto \int_{\mu} \iota_{\overline{p}}(\alpha) \cdot \iota_{\overline{q}}(\beta),
\end{align*}
induces a well-defined bilinear form on cohomology.
We have to show that for all closed elements $\alpha, \alpha' \in AI^{r}_{\overline{p}}(X, x)$ with $\alpha' - \alpha = d \eta$ for some $\eta \in AI^{r-1}_{\overline{p}}(X, x)$, and for all closed elements $\beta, \beta' \in AI^{n-r}_{\overline{q}}(X, x)$ with $\beta' - \beta = d \omega$ for some $\omega \in AI^{n-r-1}_{\overline{q}}(X, x)$, we have $\int_{\mu}(\alpha', \beta') = \int_{\mu}(\alpha, \beta)$.
It suffices to consider the case that $\alpha = \alpha'$ or $\beta = \beta'$.
If $\beta = \beta'$, then using Stokes' theorem (\Cref{stokes theorem}) and \Cref{lemma vanishing products},
\begin{align*}
\int_{\mu}(\alpha', \beta) - \int_{\mu}(\alpha, \beta) = \int_{\mu} d(\iota_{\overline{p}}(\eta) \cdot \iota_{\overline{q}}(\beta)) = \int_{\partial \mu} i^{\ast}(\iota_{\overline{p}}(\eta) \cdot \iota_{\overline{q}}(\beta)) = \int_{\partial \mu} y \cdot z = 0,
\end{align*}
%We write, $x = i^{\ast}(\iota_{\overline{p}}(\eta) \cdot \iota_{\overline{q}}(\beta)) = \vartheta_{\geq k}(y) \cdot \vartheta_{\geq k}(z)$,
where $y = i^{\ast}\iota_{\overline{p}}(\eta) \in \vartheta_{\geq k}(\tau_{\geq k}A_{PL}^{r-1}(L))$ and $z = i^{\ast}\iota_{\overline{q}}(\beta) \in \vartheta_{\geq l}(\tau_{\geq l}A_{PL}^{n-r}(L))$.
The proof in the case $\alpha = \alpha'$ is similar and will be omitted.
%By symmetry of the proof, it suffices to consider the case that .
\end{proof}

It remains to show that the bilinear form of \Cref{lemma existence of bilinear form on cohomology} is nondegenerate.
For this purpose, we assemble the cohomology long exact sequences induced by the short exact sequences of the cochain complexes (see part (ii) and (iii) of \Cref{proposition essential diagrams})
%(the second one is a short exact sequences of cochain algebras)
\begin{align*}
0 \rightarrow AI_{\overline{p}}(X, x) \xrightarrow{\iota_{\overline{p}}} A_{PL}(M) \xrightarrow{\kappa_{\overline{p}}} A_{PL}(L) / \vartheta_{\geq k}(\tau_{\geq k}A_{PL}(L)) \rightarrow 0, \\
0 \rightarrow A_{PL}(M, \partial M) \xrightarrow{\eta_{\overline{q}}} AI_{\overline{q}}(X, x) \xrightarrow{\rho_{\overline{q}}} \tau_{\geq l}A_{PL}(L) \rightarrow 0,
\end{align*}
in a ladder diagram
\begin{center}
\begin{tikzcd}
H^{r-1}(A_{PL}(L) / \vartheta_{\geq k}(\tau_{\geq k}A_{PL}(L))) \ar{r}{\int_{\partial \mu}}[swap]{\cong} \ar{d}[swap]{\delta} & H^{n-r}(\tau_{\geq l}A_{PL}(L))^{\dagger} \ar{d}{H(\rho_{\overline{q}})^{\dagger}} \\
H^{r}(AI_{\overline{p}}(X, x)) \ar{r}{\int_{\mu}} \ar{d}[swap]{H(\iota_{\overline{p}})} & H^{n-r}(AI_{\overline{q}}(X, x))^{\dagger} \ar{d}{H(\eta_{\overline{q}})^{\dagger}} \\
H^{r}(A_{PL}(M)) \ar{r}{\int_{\mu}}[swap]{\cong} \ar{d}[swap]{H(\kappa_{\overline{p}})} & H^{n-r}(A_{PL}(M, \partial M))^{\dagger} \ar{d}{\delta^{\dagger}} \\
H^{r}(A_{PL}(L) / \vartheta_{\geq k}(\tau_{\geq k}A_{PL}(L))) \ar{r}{\int_{\partial \mu}}[swap]{\cong} & H^{n-r-1}(\tau_{\geq l}A_{PL}(L))^{\dagger},
\end{tikzcd}
\end{center}
where we also use the nondegenerate bilinear form $\int_{\partial \mu}$ of \Cref{proposition truncated poincare duality} (applied to the chain representative $\lambda = \partial \mu\in C_{n-1}(L)$ of the fundamental class of $L = \partial M$ that is induced by the $\mathbb{Q}$-orientation of $M$), the nondegenerate bilinear form $\int_{\mu}$ of \Cref{poincare lefschetz duality theorem}, and the bilinear form $\int_{\mu}$ of \Cref{lemma existence of bilinear form on cohomology}.

In view of the five lemma, to show that the bilinear form (\ref{bilinear form main theorem}) is nondegenerate it suffices to show that in the ladder diagram above, the top square (TS), the middle square (MS), and the bottom square (BS) commute (up to sign).

Let us show that the top square (TS) commutes.
We have to show that, for all closed elements $\alpha \in A_{PL}^{r-1}(L) / \vartheta_{\geq k}(\tau_{\geq k}A_{PL}^{r-1}(L))$ and all closed elements $\beta \in AI_{\overline{q}}^{n-r}(X, x)$, it holds that
\begin{align}\label{equation TS}
((\int_{\mu} \circ\delta)([\alpha]))([\beta]) = (H(\rho_{\overline{q}})^{\dagger} \circ\int_{\partial \mu})([\alpha]))([\beta]).
\end{align}
Let us consider the left hand side of equation (\ref{equation TS}).
By construction of $\delta$, we can write $\delta([\alpha]) = [y]$, where $y \in AI^{r}_{\overline{p}}(X, x)$ satisfies $\iota_{\overline{p}}(y) = dx$ for some $x \in A_{PL}^{r-1}(M)$ with $\kappa_{\overline{p}}(x) = \alpha$.
% because $\alpha$ is closed.
%$\alpha = (\pi_{\geq k} \circ i^{\ast})(x) \in A_{PL}^{r-1}(L) / \tau_{\geq k}A_{PL}^{r-1}(L)$, and we have $dx \in AI^{r}_{\overline{p}}(X, \ast)$ because $\alpha$ is closed.
Thus, the left hand side of the equation is given by
\begin{align*}
((\int_{\mu} \circ\delta)([\alpha]))([\beta]) = (\int_{\mu}([y]))([\beta]) = \int_{\mu} \iota_{\overline{p}}(y) \cdot \iota_{\overline{q}}(\beta) = \int_{\mu} dx \cdot \iota_{\overline{q}}(\beta).
\end{align*}
Using Stokes' theorem (\Cref{stokes theorem}) and the fact that $\beta$ is closed, we obtain
\begin{align*}
\int_{\mu} dx \cdot \iota_{\overline{q}}(\beta) = \int_{\mu} d(x \cdot \iota_{\overline{q}}(\beta)) = \int_{\partial\mu} i^{\ast}(x \cdot \iota_{\overline{q}}(\beta)).
\end{align*}
Since $\kappa_{\overline{p}} = \pi_{\geq k} \circ i^{\ast}$ (\Cref{proposition essential diagrams}(iii)), the right hand side of (\ref{equation TS}) becomes
\begin{align*}
(H(\rho_{\overline{q}})^{\dagger} \circ\int_{\partial\mu})([\alpha]))([\beta]) = (\int_{\partial\mu}([\alpha]))([\rho_{\overline{q}}(\beta)]) = \int_{\partial\mu} i^{\ast}(x) \cdot \vartheta_{\geq l}(\rho_{\overline{q}}(\beta)).
\end{align*}
Noting that $i^{\ast} \circ \iota_{\overline{q}} = \vartheta_{\geq l} \circ \rho_{\overline{q}}$ (\Cref{proposition essential diagrams}(i)), we conclude that (TS) commutes.

Next, we show that the middle square (MS) commutes.
We have to show that for all closed elements $\alpha \in AI^{r}_{\overline{p}}(X, x)$ and all closed elements $\beta \in A_{PL}^{n-r}(M, \partial M)$, it holds that
\begin{align}\label{equation MS}
((\int_{\mu} \circ H(\iota_{\overline{p}}))([\alpha]))([\beta]) = ((H(\eta_{\overline{q}})^{\dagger} \circ\int_{\mu})([\alpha]))([\beta]).
\end{align}
Let us consider the left hand side of equation (\ref{equation MS}), which is
$$
((\int_{\mu} \circ H(\iota_{\overline{p}}))([\alpha]))([\beta]) = (\int_{\mu}([\iota_{\overline{p}}(\alpha)]))([\beta]) = \int_{\mu} \iota_{\overline{p}}(\alpha) \cdot j^{\ast}(\beta).
$$
On the other hand, the right hand side of (\ref{equation MS}) is given by
$$
((H(\eta_{\overline{q}})^{\dagger} \circ\int_{\mu})([\alpha]))([\beta]) = (\int_{\mu}([\alpha]))([\eta_{\overline{q}}(\beta)]) = \int_{\mu} \iota_{\overline{p}}(\alpha) \cdot \iota_{\overline{q}}(\eta_{\overline{q}}(\beta)).
$$
Noting that $j^{\ast} = \iota_{\overline{q}} \circ \eta_{\overline{q}}$ (\Cref{proposition essential diagrams}(ii)), we conclude that (MS) commutes.

Finally, let us show that the bottom square (BS) commutes (up to sign).
We have to show that for all closed elements $\alpha \in A^{r}_{PL}(M)$ and all closed elements $\beta \in \tau_{\geq l}A^{n-r-1}_{PL}(L)$, it holds that
\begin{align}\label{equation BS}
((\int_{\partial \mu} \circ H(\kappa_{\overline{p}}))([\alpha]))([\beta]) = ((\delta^{\dagger} \circ\int_{\mu})([\alpha]))([\beta]).
\end{align}
Since $\kappa_{\overline{p}} = \pi_{\geq k} \circ i^{\ast}$ (\Cref{proposition essential diagrams}(iii)), the left hand side of equation (\ref{equation BS}) becomes
$$
((\int_{\partial \mu} \circ H(\kappa_{\overline{p}})([\alpha]))([\beta]) = (\int_{\partial \mu}([\pi_{\geq k}(i^{\ast}(\alpha))]))([\beta]) = \int_{\partial\mu} i^{\ast}(\alpha) \cdot \vartheta_{\geq l}(\beta).
$$
Let us compare this to the right hand side of (\ref{equation BS}).
By construction of $\delta$, we can write $\delta([\beta]) = [y]$, where $y \in A_{PL}^{n-r}(M, \partial M)$ satisfies $\eta_{\overline{q}}(y) = dx$ for some $x \in AI_{\overline{q}}^{n-r-1}(X, x)$ with $\rho_{\overline{q}}(x) = \beta$.
Since $j^{\ast} = \iota_{\overline{q}} \circ \eta_{\overline{q}}$ (\Cref{proposition essential diagrams}(ii)), the right hand side of (\ref{equation BS}) is given by
$$
((\delta^{\dagger} \circ\int_{\mu})([\alpha]))([\beta]) = (\int_{\mu}([\alpha]))([y]) = \int_{\mu} \alpha \cdot j^{\ast}(y) = \int_{\mu} \alpha \cdot \iota_{\overline{q}}(dx).
$$
Using Stokes' theorem (\Cref{stokes theorem}) and the fact that $\alpha$ is closed, we obtain
$$
\int_{\mu} \alpha \cdot \iota_{\overline{q}}(dx) = \pm \int_{\mu} d(\alpha \cdot \iota_{\overline{q}}(x)) = \pm \int_{\partial\mu} i^{\ast}(\alpha \cdot \iota_{\overline{q}}(x)).
$$
Noting that $i^{\ast} \circ \iota_{\overline{q}} = \vartheta_{\geq l} \circ \rho_{\overline{q}}$ (\Cref{proposition essential diagrams}(i)), we conclude that (BS) commutes (up to sign).

This completes the proof of \Cref{main result}.

%\newpage
\section{Smooth differential forms}\label{Smooth differential forms}
In this section, we work over the ground field $\mathbb{R}$.
We also assume that the pseudomanifold $X$ considered at the beginning of \Cref{A commutative model for intersection spaces} has a Thom-Mather $C^{\infty}$-stratification.
Since $X$ has one isolated singularity $x$, the only condition is that the regular stratum $X \setminus \{x\}$ of $X$ is equipped with a smooth structure.

%(and the codimension $0$ submanifold $M$ with boundary) 

%manifold $M$ which is obtained as the complement in $X$ of a chosen distinguished neighborhood of the singularity $x$ has a smooth structure.

Let $\Omega^{\ast}(M)$ denote the commutative cochain algebra of smooth differential forms on the smooth manifold $M \subset X \setminus \{x\}$ with boundary $\partial M$.
One of the main results of \cite{ban4} is a de Rham description of the intersection space cohomology $\widetilde{H}^{\ast}(I^{\overline{p}}X; \mathbb{R})$ by means of the subcomplex
$$
\Omega I_{\overline{p}}^{\ast}(M) = \{\omega \in \Omega^{\ast}(M); \; \omega|_{\partial M} \in \tau_{\geq k} \Omega^{\ast}(\partial M)\} \subset \Omega^{\ast}(M)
$$
%of the commutative cochain algebra $\Omega^{\ast}(M)$ of smooth differential forms on the smooth manifold $M$
determined by the choice of a cotruncation cochain complex $\tau_{\geq k} \Omega^{\ast}(\partial M)$ for the cutoff degree $k = k(\overline{p})$ as in \Cref{example standard cotruncation}.
Banagl's de Rham isomorphism
$$
\widetilde{H}^{\ast}(I^{\overline{p}}X; \mathbb{R}) \cong H^{\ast}(\Omega I_{\overline{p}}^{\ast}(M))$$
is induced by integration of differential forms over smooth singular simplices, and relies on a partial smoothing technique.
In view of \Cref{main result B}, we have the following strengthening of Banagl's de Rham description.

%Recall the notion of weak equivalence of morphisms from .

\begin{theorem}\label{de rham theorem}
The morphism $\iota_{\overline{p}} \colon AI_{\overline{p}}(X; \mathbb{R}) \rightarrow A_{PL}(M \sqcup \ast; \mathbb{R})$ over $\mathbb{R}$ constructed as in \Cref{definition convenient rational model} and the morphism $\Omega I^{\ast}_{\overline{p}}(M) \oplus \mathbb{R} \rightarrow \Omega^{\ast}(M) \oplus \mathbb{R}$ over $\mathbb{R}$ given by $(\alpha, t) \mapsto (\alpha + t, t)$ (and with augmentations given by projection to the second component) are weakly equivalent over $\mathbb{R}$.
%
%The morphism
%$$
%\iota_{\overline{p}} \colon AI_{\overline{p}}(X; \mathbb{R}) = A_{PL}(M \sqcup \ast; \mathbb{R}) \times_{A_{PL}(\partial M \sqcup \ast; \mathbb{R})} \widehat{\tau}_{\geq k}A_{PL}(\partial M; \mathbb{R}) \rightarrow A_{PL}(M \sqcup \ast; \mathbb{R})
%$$
%obtained as in \Cref{definition convenient rational model} from the choice of an augmented standard $k$-cotruncation
%$$
%\widehat{\tau}_{\geq k}A_{PL}(\partial M; \mathbb{R}) \rightarrow A_{PL}(\partial M \sqcup \ast; \mathbb{R})  = A_{PL}(\partial M; \mathbb{R}) \oplus \mathbb{R}
%$$
%of $A_{PL}(\partial M; \mathbb{R})$ (see \Cref{example standard cotruncation}) is weakly equivalent to the inclusion $\Omega I^{\ast}_{\overline{p}}(M) \oplus \mathbb{R} \rightarrow \Omega^{\ast}(M) \oplus \mathbb{R}$ given by $(\alpha, t) \mapsto (\alpha + t, t)$.
%\textcolor{red}{Moreover, under this weak equivalence, the augmentations $A_{PL}(M \sqcup \ast; \mathbb{R}) \rightarrow A_{PL}(\ast; \mathbb{R}) = \mathbb{R}$ and $\operatorname{pr}_{2} \colon \Omega^{\ast}(M) \oplus \mathbb{R} \rightarrow \mathbb{R}$ correspond to each other.}
\end{theorem}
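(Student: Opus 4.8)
The plan is to recognize Banagl's morphism $\Omega I^{\ast}_{\overline{p}}(M) \oplus \mathbb{R} \to \Omega^{\ast}(M) \oplus \mathbb{R}$ as the first projection of a fiber product square of exactly the shape of diagram (\ref{diagram definition of commutative model}), but built from smooth forms instead of piecewise linear polynomial forms, and then to transport this square to the square defining $\iota_{\overline{p}}$ along the de Rham comparison, using \Cref{corollary weak equivalence of fiber products} and \Cref{proposition weak equivalence of cotruncations}. This runs in close parallel to the proof of \Cref{main result B}.

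First I would set up the smooth fiber product. Restriction of smooth forms $\Omega^{\ast}(M) \twoheadrightarrow \Omega^{\ast}(\partial M)$ is surjective (extend using a collar of $\partial M$), and Banagl's cotruncation cochain complex $\tau_{\geq k}\Omega^{\ast}(\partial M)$ is obtained by choosing a complement of $d(\Omega^{k-1}(\partial M))$ in $\Omega^{k}(\partial M)$, so that, since $H^{0}(\Omega^{\ast}(\partial M)) = \mathbb{R}$ (the link $L = \partial M$ being connected), $\widehat{\tau}_{\geq k}\Omega^{\ast}(\partial M) = \tau_{\geq k}\Omega^{\ast}(\partial M) \oplus \mathbb{R}$ together with the canonical inclusion into $\Omega^{\ast}(\partial M) \oplus \mathbb{R} = \Omega^{\ast}(\partial M \sqcup \ast)$ is an augmented standard $k$-cotruncation of $\Omega^{\ast}(\partial M)$ in the sense of \Cref{example standard cotruncation}. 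Unwinding the definition of the fiber product then identifies
$$
\Omega^{\ast}(M \sqcup \ast) \times_{\Omega^{\ast}(\partial M \sqcup \ast)} \widehat{\tau}_{\geq k}\Omega^{\ast}(\partial M) \;\cong\; \Omega I^{\ast}_{\overline{p}}(M) \oplus \mathbb{R}
$$
in such a way that the first projection becomes precisely the morphism $(\alpha, t) \mapsto (\alpha + t, t)$ of the theorem, i.e.\ the smooth-forms analogue of diagram (\ref{diagram definition of commutative model}).

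Next I would bring in the de Rham comparison of commutative cochain algebras between $A_{PL}(-; \mathbb{R})$ and $\Omega^{\ast}(-)$ on smooth manifolds, in the form of the natural zig-zag through the piecewise smooth forms $A_{PS}(-)$ relative to a smooth triangulation of $M$ restricting to one of $\partial M$ (cf.\ the de Rham theorem in \cite[Ch.~11]{fht}). Adjoining the disjoint basepoint, this yields a length-two zig-zag of quasi-isomorphisms
$$
A_{PL}(M \sqcup \ast; \mathbb{R}) \hookrightarrow A_{PS}(M \sqcup \ast) \hookleftarrow \Omega^{\ast}(M \sqcup \ast)
$$
that is compatible with the (surjective) restriction morphisms to $\partial M \sqcup \ast$. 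Reading this as a zig-zag of two quasi-isomorphism squares between the surjections $\operatorname{incl}^{\ast}$ over $A_{PL}$, $A_{PS}$ and $\Omega^{\ast}$, and fixing an augmented standard $k$-cotruncation $\widehat{\tau}_{\geq k}A_{PS}(\partial M)$ of $A_{PS}(\partial M)$, I would apply \Cref{corollary weak equivalence of fiber products} to each square. Its hypothesis is met because, in each square, composing the cotruncation map $g$ with the quasi-isomorphism $\gamma$ of the square produces a morphism whose induced map on cohomology still vanishes in degrees $0 < r < k$ and is an isomorphism onto the cohomology of the relevant link-forms algebra in degrees $r \geq k$; hence $\gamma \circ g$ is again an augmented $k$-cotruncation of that algebra, and \Cref{proposition weak equivalence of cotruncations} provides the weak equivalence between $\gamma \circ g$ and the corresponding $g'$ over the shared term. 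Chaining the two resulting weak equivalences over $\mathbb{R}$ shows that $\iota_{\overline{p}} \colon AI_{\overline{p}}(X; \mathbb{R}) \to A_{PL}(M \sqcup \ast; \mathbb{R})$, the $A_{PS}$-fiber-product projection, and the morphism $(\alpha, t) \mapsto (\alpha + t, t)$ are all weakly equivalent over $\mathbb{R}$, which is the assertion.

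I expect the only genuine obstacle to be the third step: extracting from the de Rham comparison a zig-zag of quasi-isomorphism squares whose vertical arrows are all surjective and whose formation is natural with respect to the inclusion $\partial M \hookrightarrow M$. This requires choosing the smooth triangulations compatibly and checking surjectivity of restriction to $\partial M$ at each stage (for $A_{PL}$ this is \Cref{The commutative cochain algebra apl}, for $\Omega^{\ast}$ and $A_{PS}$ one uses a collar). The remaining points — the identification of $\Omega I^{\ast}_{\overline{p}}(M) \oplus \mathbb{R}$ as the stated fiber product, and the verification that the composites $\gamma \circ g$ are augmented $k$-cotruncations — are routine.
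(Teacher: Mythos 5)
Your overall strategy matches the paper exactly: recognize Banagl's morphism as the first projection of a smooth-forms fiber product of the same shape as diagram (\ref{diagram definition of commutative model}), transport that square to the $A_{PL}$ square along a two-step de Rham zig-zag using \Cref{corollary weak equivalence of fiber products}, and at each step invoke \Cref{proposition weak equivalence of cotruncations} after observing that composing a standard $k$-cotruncation with the comparison quasi-isomorphism again gives a $k$-cotruncation. The identification of $\Omega I^{\ast}_{\overline{p}}(M)\oplus\mathbb{R}$ with the fiber product is also exactly what the paper does at the end.

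The one place where you diverge is the intermediate term of the zig-zag. You propose ``piecewise smooth forms $A_{PS}(-)$ relative to a smooth triangulation of $M$ restricting to one of $\partial M$,'' and you yourself flag the need to choose triangulations compatibly and to verify naturality and surjectivity as the main obstacle. This is not quite what \cite[Ch.~11]{fht} supplies, and the paper avoids the issue entirely: instead of a triangulation-dependent complex, it uses the simplicial cochain algebra $A_{DR}$ evaluated on the \emph{smooth singular} simplicial set $S^{\infty}_{\ast}(N)$, giving the zig-zag $A_{PL}(N;\mathbb{R}) \xrightarrow{\gamma_N} A_{PL}(S^{\infty}_{\ast}(N);\mathbb{R}) \xrightarrow{\beta_N} A_{DR}(S^{\infty}_{\ast}(N)) \xleftarrow{\alpha_N} \Omega^{\ast}(N)$ from Theorem~11.4 in \cite{fht}. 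This construction is functorial in $N$, so naturality with respect to $\partial M\hookrightarrow M$ is automatic, and surjectivity of the restriction maps follows from extendability of the simplicial cochain algebras (Lemmas 10.7(iii) and 11.3(iii) in \cite{fht}) rather than from any collar or triangulation argument. One minor point the paper does address explicitly and you do not: Theorem~11.4 in \cite{fht} is stated for manifolds without boundary, so one needs to note its proof carries over to the bounded case using charts modeled on half-space. In short, your proof plan is sound, and replacing your $A_{PS}$ zig-zag with the $A_{DR}(S^{\infty}_{\ast}(-))$ zig-zag from FHT Theorem~11.4 removes the only genuine obstacle you identified.
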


\begin{proof}
A smooth singular $k$-simplex on a smooth manifold $N$ with boundary is a continuous map $\Delta^{k} \rightarrow N$ that extends to a smooth map $U \rightarrow \widetilde{N}$, where $U$ is an open neighborhood of the standard simplex $\Delta ^{k} \subset \mathbb{R}^{k+1}$, and $\widetilde{N}$ is obtained from $N$ by gluing an outward collar to $\partial N$.
Let $S^{\infty}_{\ast}(N)$ denote the simplicial set of smooth singular simplices on $N$ (compare \S 11(c) in \cite{fht}).
Moreover, let $A_{DR}$ denote the simplicial cochain algebra of real $C^{\infty}$ differential forms (see \S 11(c) in \cite{fht}).

According to Theorem 11.4 in \cite{fht} (whose proof is also applicable to manifolds with boundary by taking into account charts at boundary points modeled on upper Euclidean half space), the inclusion $S^{\infty}_{\ast}(N) \rightarrow S_{\ast}(N)$ into the simplicial set $S_{\ast}(N)$ of all singular simplices on $N$ induces a natural quasi-isomorphism
$$
\gamma_{N} \colon A_{PL}(N; \mathbb{R}) = A_{PL}(S_{\ast}(N); \mathbb{R}) \xrightarrow{\simeq} A_{PL}(S_{\ast}^{\infty}(N); \mathbb{R}).
$$
Moreover, by the same theorem, the inclusion $A_{PL}(\quad; \mathbb{R}) \rightarrow A_{DR}$ of simplicial cochain algebras induces a natural quasi-isomorphism
$$
\beta_{N} \colon A_{PL}(S_{\ast}^{\infty}(N); \mathbb{R}) \xrightarrow{\simeq} A_{DR}(S_{\ast}^{\infty}(N)).
$$
We apply \Cref{corollary weak equivalence of fiber products} (in its version over the ground field $\mathbb{R}$) to the quasi-isomorphism $(\alpha, \gamma) \colon f \xrightarrow{\simeq} f'$ over $\mathbb{R}$ given by the commutative diagram of morphisms over $\mathbb{R}$
\begin{center}
\begin{tikzcd}
A_{PL}(M \sqcup \ast; \mathbb{R}) \ar{d}{\alpha = \beta_{M \sqcup \ast} \circ \gamma_{M \sqcup \ast}}[swap]{\simeq} \ar{r}{f} & A_{PL}(\partial M \sqcup \ast; \mathbb{R}) \ar{d}{\gamma = \beta_{\partial M \sqcup \ast} \circ \gamma_{\partial M \sqcup \ast}}[swap]{\simeq}  \\
A_{DR}(S^{\infty}_{\ast}(M \sqcup \ast)) \ar{r}{f'} & A_{DR}(S^{\infty}_{\ast}(\partial M \sqcup \ast)),
\end{tikzcd}
\end{center}
and to chosen augmented standard $k$-cotruncations (see \Cref{example standard cotruncation})
\begin{align*}
g \colon \widehat{\tau}_{\geq k}A_{PL}(\partial M; \mathbb{R}) &\rightarrow A_{PL}(\partial M \sqcup \ast; \mathbb{R})  = A_{PL}(\partial M; \mathbb{R}) \oplus \mathbb{R}, \\
g' \colon \widehat{\tau}_{\geq k}A_{DR}(S^{\infty}_{\ast}(\partial M)) &\rightarrow A_{DR}(S^{\infty}_{\ast}(\partial M \sqcup \ast)) = A_{DR}(S^{\infty}_{\ast}(\partial M)) \oplus \mathbb{R}
\end{align*}
of $A_{PL}(\partial M; \mathbb{R})$ and $A_{DR}(S^{\infty}_{\ast}(\partial M))$, respectively, to obtain a weak equivalence over $\mathbb{R}$ between the morphism $\iota_{\overline{p}} \colon AI_{\overline{p}}(X; \mathbb{R}) \rightarrow A_{PL}(M \sqcup \ast; \mathbb{R})$ and the morphism
%morphism $\iota_{\overline{p}}$ to the morphism
$$
\kappa_{\overline{p}} \colon A_{DR}(S^{\infty}_{\ast}(M \sqcup \ast)) \times_{A_{DR}(S^{\infty}_{\ast}(\partial M \sqcup \ast))} \widehat{\tau}_{\geq k}A_{DR}(S^{\infty}_{\ast}(\partial M)) \rightarrow A_{DR}(S^{\infty}_{\ast}(M \sqcup \ast))
$$
given by projection of the fiber product to the first component.
(For this purpose, we note that our morphisms $f$ and $f'$ in the above diagram are indeed surjective because $A_{PL}(\quad; \mathbb{R})$ is extendable by Lemma 10.7(iii) in \cite{fht}, and $A_{DR}$ is extendable by Lemma 11.3(iii) in \cite{fht}.
Moreover, we observe that our morphisms $g'$ and $\gamma \circ g$ are weakly equivalent over $A_{DR}(S^{\infty}_{\ast}(\partial M \sqcup \ast))$ by \Cref{proposition weak equivalence of cotruncations}.)

By Theorem 11.4 in \cite{fht}, there is also a natural quasi-isomorphism
$$
\alpha_{N} \colon \Omega^{\ast}(N) \xrightarrow{\simeq} A_{DR}(S_{\ast}^{\infty}(N)).
$$
We apply \Cref{corollary weak equivalence of fiber products} (in its version over the ground field $\mathbb{R}$) to the quasi-isomorphism $(\alpha, \gamma) \colon f \xrightarrow{\simeq} f'$ over $\mathbb{R}$ given by the commutative diagram of morphisms over $\mathbb{R}$
\begin{center}
\begin{tikzcd}
\Omega^{\ast}(M \sqcup \ast) \ar{d}{\alpha = \alpha_{M \sqcup \ast}}[swap]{\simeq} \ar{r}{f} & \Omega^{\ast}(\partial M \sqcup \ast) \ar{d}{\gamma = \alpha_{\partial M \sqcup \ast}}[swap]{\simeq}  \\
A_{DR}(S^{\infty}_{\ast}(M \sqcup \ast)) \ar{r}{f'} & A_{DR}(S^{\infty}_{\ast}(\partial M \sqcup \ast))
\end{tikzcd}
\end{center}
and to the previously fixed augmented standard $k$-cotruncations
\begin{align*}
\widehat{\tau}_{\geq k} \Omega^{\ast}(\partial M) &\rightarrow \Omega^{\ast}(\partial M \sqcup \ast) = \Omega^{\ast}(\partial M) \oplus \mathbb{R}, \\
\widehat{\tau}_{\geq k}A_{DR}(S^{\infty}_{\ast}(\partial M)) &\rightarrow A_{DR}(S^{\infty}_{\ast}(\partial M \sqcup \ast)) = A_{DR}(S^{\infty}_{\ast}(\partial M)) \oplus \mathbb{R}
\end{align*}
of $\Omega^{\ast}(\partial M \sqcup \ast)$ and $A_{DR}(S^{\infty}_{\ast}(\partial M))$, taking the roles of $g$ and $g'$, respectively, to obtain a weak equivalence over $\mathbb{R}$ between the morphism
$$
\lambda_{\overline{p}} \colon \Omega^{\ast}(M \sqcup \ast) \times_{\Omega^{\ast}(\partial M \sqcup \ast)} \widehat{\tau}_{\geq k} \Omega^{\ast}(\partial M) \rightarrow \Omega^{\ast}(M \sqcup \ast)
$$
given by projection of the fiber product to the first component and the morphism $\kappa_{\overline{p}}$.
(For this purpose, we observe that our morphisms $g'$ and $\gamma \circ g$ are indeed weakly equivalent over $A_{DR}(S^{\infty}_{\ast}(\partial M \sqcup \ast))$ by \Cref{proposition weak equivalence of cotruncations}.)

Finally, we observe that the morphism $\Omega I^{\ast}_{\overline{p}}(M) \oplus \mathbb{R} \rightarrow \Omega^{\ast}(M) \oplus \mathbb{R}$ over $\mathbb{R}$ given by $(\alpha, t) \mapsto (\alpha + t, t)$ corresponds to the morphism $\lambda_{\overline{p}}$ under the composition of isomorphisms
\begin{center}
\begin{tikzcd}
\Omega I^{\ast}_{\overline{p}}(M) \oplus \mathbb{R} \ar{d}{\cong}[swap]{(\omega, t) \mapsto ((\omega, \omega|_{\partial M}), t)} \ar{r}{\cong} & \Omega^{\ast}(M \sqcup \ast) \times_{\Omega^{\ast}(\partial M \sqcup \ast)} \widehat{\tau}_{\geq k} \Omega^{\ast}(\partial M) \\
\left(\Omega^{\ast}(M) \times_{\Omega^{\ast}(\partial M)} \tau_{\geq k} \Omega^{\ast}(\partial M)\right) \oplus \mathbb{R} \ar{r}{\Psi}[swap]{\cong} & \Omega^{\ast}(M) \oplus \mathbb{R} \times_{\Omega^{\ast}(\partial M) \oplus \mathbb{R}}\tau_{\geq k} \Omega^{\ast}(\partial M) \oplus \mathbb{R} \ar{u}[swap]{=},
\end{tikzcd}
\end{center}
where $\Psi((\omega, \eta), t) = ((\omega + t, t), (\eta, t))$.
%\textcolor{red}{The statement about the augmentations is easy to verify.}
\end{proof}

\subsection*{Acknowledgements}
This work was written while the author was a JSPS International Research Fellow (Postdoctoral Fellowships for Research in Japan (Standard)).

\bibliographystyle{amsplain}

\begin{thebibliography}{10}
%\bibitem{ban2} M. Banagl, \emph{Topological invariants of stratified spaces},
%Springer Monographs in Mathematics, Springer-Verlag Berlin Heidelberg, 2007.

\bibitem{ab} M. Agust\'{i}n, J.F. de Bobadilla, \emph{Intersection space constructible complexes}, 2018,
\textbf{\href{https://arxiv.org/abs/1804.06185}{https://arxiv.org/abs/1804.06185}}.

%\bibitem{ban0} M. Banagl, \emph{Extending Intersection Homology Type Invariants to Non-Witt Spaces},
%Memoirs Amer. Math. Soc. \textbf{160} (2002), no. 760, 1--83.

\bibitem{ban2} M. Banagl, \emph{Singular spaces and generalized Poincar\'{e} complexes},
Electron. Res. Announc. Math. Sci. \textbf{16} (2009), 63--73.

\bibitem{ban} M. Banagl, \emph{Intersection spaces, spatial homology truncation, and string theory},
Lecture Notes in Math., no. 1997, Springer Verlag Berlin Heidelberg, 2010.

\bibitem{ban3} M. Banagl, \emph{First cases of intersection spaces in stratification depth $2$},
J. of Singularities \textbf{5} (2012), 57--84.

\bibitem{ban4} M. Banagl, \emph{Foliated stratified spaces and a de Rham complex describing intersection space cohomology},
J. Differential Geometry \textbf{104} (2016), 1--58.

\bibitem{bbm} M. Banagl, N. Budur, L. Maxim, \emph{Intersection spaces, perverse sheaves and type IIB string theory},
Adv. Theor. Math. Phys. \textbf{18} (2014), 363--399.

\bibitem{bc} M. Banagl, B. Chriestenson, \emph{Intersection spaces, equivariant Moore approximation and the signature},
J. of Singularities \textbf{16} (2017), 141--179.

\bibitem{bh} M. Banagl, E. Hunsicker, \emph{Hodge theory for intersection space cohomology},
Geometry \& Topology \textbf{23} (2019), 2165--2225.

\bibitem{bm0} M. Banagl, L. Maxim, \emph{Deformation of singularities and the homology of intersection spaces}, J. Topol. Anal. \textbf{4} (2012), 413--448.

\bibitem{bm} M. Banagl, L. Maxim, \emph{Intersection spaces and hypersurface singularities},
J. of Singularities \textbf{5} (2012), 48--56.

%\bibitem{bre} G.E. Bredon, \emph{Topology and geometry}, Grad. Texts in Math., no. 139, Springer Verlag, 1993.

%\bibitem{bro} W. Browder, \emph{Surgery on simply-connected manifolds}, Springer Verlag, 1972.

%\bibitem{bry} J.-L. Brylinski, \emph{Loop spaces, characteristic classes and geometric quantization},
%Progr. Math., no. 107, Birkh\"{a}user Verlag, Boston, 1993.

%\bibitem{doc} M. P. do Carmo, \emph{Riemannian geometry}, \textcolor{red}{Birkh\"{a}user, 1992}.

\bibitem{che} J. Cheeger, \emph{On the spectral geometry of spaces with cone-like singularities},
Proc. Natl. Acad. Sci. USA \textbf{76} (1979), 2103--2106.

\bibitem{che2} J. Cheeger, \emph{On the Hodge theory of Riemannian pseudomanifolds},
Proc. Sympos. Pure Math. \textbf{36} (1980), 91--146.

\bibitem{che3} J. Cheeger, \emph{Spectral geometry of singular Riemannian spaces},
J. Differential Geom. \textbf{18} (1983), 575--657.

%\bibitem{ess} J.T. Essig, \emph{Intersection space cohomology of three-strata pseudomanifolds}, arxiv: \url{https://arxiv.org/abs/1804.06690}.

\bibitem{es} F.W. Schl\"{o}der, J.T. Essig, \emph{Multiplicative de Rham theorems for relative and intersection space cohomology},
J. of Singularities \textbf{19} (2019), 97--130.

\bibitem{fht} Y. F\'{e}lix, S. Halperin, J.-C. Thomas, \emph{Rational homotopy theory}, Springer Verlag, 2001.

\bibitem{fot} Y. F\'{e}lix, J. Oprea, D. Tanr\'{e}, \emph{Algebraic models in geometry},
Oxford Graduate Texts in Mathematics. Oxford University Press, 2008.

\bibitem{ges} C. Geske, \emph{Algebraic intersection spaces}, J. Topol. Anal. (2019), to appear.

\bibitem{gm} M. Goresky, R.D. MacPherson, \emph{Intersection homology theory}, Topology \textbf{19} (1980), 135--162.

\bibitem{gm2} M. Goresky, R.D. MacPherson, \emph{Intersection homology II}, Invent. Math. \textbf{71} (1983), 77--129.

%\bibitem{gm3} M. Goresky, R. MacPherson, \emph{Problems and bibliography on intersection homology}, Progress in Mathematics, \textbf{50} (1984), 221--229.

%\bibitem{hat} A. Hatcher, \emph{Algebraic topology}, Cambridge Univ. Press, 2002.

%\bibitem{ks} M. Kashiwara, P. Schapira, \emph{Sheaves on manifolds}, Grundlehren Math. Wiss., no. 292, Springer Verlag, 1990.

\bibitem{klim} M. Klimczak, \emph{Poincar\'{e} duality for spaces with isolated singularities}, manuscripta math. \textbf{153} (2017), 231--262.

\bibitem{klim2} M. Klimczak, \emph{Mixed Hodge structures on the rational models of intersection spaces}, J. of Singularities \textbf{17} (2018), 428--482.

\bibitem{max} L. Maxim, \emph{Intersection spaces, perverse sheaves and string theory}, J. of Singularities \textbf{15} (2016), 118--125.

%\bibitem{sav} F. Savarino, \emph{The symmetric signature of intersection spaces}, Master's thesis, Heidelberg University, 2016.
%What is more, we expect that our results can be applied along the lines of \cite{sav} to construct the symmetric signature of intersection spaces for certain two strata Witt spaces with non-isolated singular stratum.

%\bibitem{sieg} P.H. Siegel, \emph{Witt spaces: a geometric cycle theory for KO-homology at odd primes}, Amer. J. of Math. \textbf{105} (1983), no. 5, 1067--1105.

%\bibitem{spie} M. Spiegel, \emph{K-Theory of Intersection Spaces}, PhD thesis, Heidelberg University, 2013.

%\bibitem{sta} J. Stasheff, \emph{Rational Poincar\'{e} duality spaces}, Ill. J. Math. \textbf{27} (1983), 104--109.

\bibitem{wra} D.J. Wrazidlo, \emph{Induced maps between intersection spaces}, Master's thesis, Heidelberg University, 2013.
\textbf{\href{http://nbn-resolving.de/urn:nbn:de:bsz:16-heidok-232706}{urn:nbn:de:bsz:16-heidok-232706}}

\bibitem{wra2} D.J. Wrazidlo, \emph{A fundamental class for intersection spaces of depth one Witt spaces}, 2019,
\textbf{\href{https://arxiv.org/abs/1904.03605}{https://arxiv.org/abs/1904.03605}}.


\end{thebibliography}

\end{document}